\definecolor{grey}{rgb}{.7,.7,.7}
\newcommand{\e}{\varepsilon}
\newcommand{\N}{\mathbb{N}}
\newcommand{\de}{\partial}
\renewcommand{\phi}{\varphi}
\newcommand{\bj}{{\mathbf{j}}}
\newcommand{\bi}{{\mathbf{i}}}
\renewcommand{\div}{\mathop{\mathrm{div}}}
\renewcommand{\epsilon}{\varepsilon}
\def\R{\mathbb{R}}
\def\H{\mathcal{H}}
\def\grad{\nabla}
\def\dist{\text{dist}}
\theoremstyle{plain}
\newtheorem{thm}{Theorem}[section]
\newtheorem{lem}[thm]{Lemma}
\newtheorem{prop}[thm]{Proposition}
\theoremstyle{definition}
\newtheorem{defin}[thm]{Definition}
\theoremstyle{remark}
\newtheorem{rem}[thm]{Remark}
\theoremstyle{conjecture}
\title[Two examples of minimal Cheeger sets in the plane]{Two examples of minimal Cheeger sets in the plane}
\author{Gian Paolo Leonardi}
\address{Dipartimento di Scienze Fisiche, Informatiche e Matematiche, Universit{\`a} degli Studi di Modena e Reggio Emilia, Via Campi 213/b, I-41125 Modena, ITALY}
\email{gianpaolo.leonardi@unimore.it}
\author{Giorgio Saracco}
\address{Department Mathematik, Universit\"at Erlangen-N\"urnberg, Cauerst. 11, D-91058 Erlangen, Germany}
\email{saracco@math.fau.de}
\thanks{G.P. Leonardi and G. Saracco have been supported by GNAMPA projects: \textit{Variational problems and geometric measure theory in metric spaces} (2016). G. Saracco has been also supported by the DFG Grant n. GZ:PR 1687/1-1}
\subjclass[2010]{Primary: 49Q10, 53A10. Secondary: 35P15}
\keywords{Cheeger problem; minimal Cheeger set; weak regularity; capillarity}
\begin{document}

\definecolor{qqqqtt}{rgb}{0.,0.,0.2}
\definecolor{qqzzqq}{rgb}{0.2,0.2,0.2}
\definecolor{uuuuuu}{rgb}{0.26666666666666666,0.26666666666666666,0.26666666666666666}
\definecolor{wwqqzz}{rgb}{0.3333333333333333,0.3333333333333333,0.3333333333333333}
\definecolor{qqwuqq}{rgb}{0.12941176470588237,0.12941176470588237,0.12941176470588237}
\definecolor{uuuuuu}{rgb}{0.26666666666666666,0.26666666666666666,0.26666666666666666}
\definecolor{ttqqqq}{rgb}{0.06666666666666667,0.06666666666666667,0.06666666666666667}
\definecolor{wwccff}{rgb}{0.4,0.8,1.}
\definecolor{ttttqq}{rgb}{0.2,0.2,0.}

\begin{abstract}
We construct two minimal Cheeger sets in the Euclidean plane, i.e. unique minimizers of the ratio ``perimeter over area'' among their own measurable subsets. The first one gives a counterexample to the so-called weak regularity property of Cheeger sets, as its perimeter does not coincide with the $1$-dimensional Hausdorff measure of its topological boundary. The second one is a kind of porous set, whose boundary is not locally a graph at many of its points, yet it is a weakly regular open set admitting a unique (up to vertical translations) non--parametric solution to the prescribed mean curvature equation, in the extremal case corresponding to the capillarity for perfectly wetting fluids in zero gravity.
\end{abstract}

 \hspace{-3cm}
 {
 \begin{minipage}[t]{0.6\linewidth}
 \begin{scriptsize}
 \vspace{-2cm}
 This is a pre-print of an article published in \emph{Ann. Mat. Pura Appl.}. The final authenticated version is available online at: http://dx.doi.org/10.1007/s10231-018-0735-y
 \end{scriptsize}
\end{minipage} 
}

\maketitle

\section*{Introduction} 

Given $\Omega \subset \R^n$ open and bounded, its Cheeger constant is defined as
\begin{equation}\label{eq:cheegerconstant}
h(\Omega) :=\inf_{E\subset \Omega} \frac{P(E)}{|E|}\,,
\end{equation}
where $P(E)$ and $|E|$ denote, respectively, the De Giorgi's perimeter and the $n$-dimensional Lebesgue measure of $E$. The variational problem associated with the definition of $h(\Omega)$ first appeared in \cite{Besicovitch, Steiner1841} limitedly to convex subsets of the Euclidean plane; see also \cite{CroftFalconerGuy,SingmasterSouppuris}. A more general formulation is due to Cheeger, who proved in \cite{Cheeger70} that the first eigenvalue of the Laplace-Beltrami operator on a compact Riemannian manifold $M$ is bounded from below by $h^2(M)/4$. Since then, the problem in the Euclidean setting has commonly been known as the \emph{Cheeger problem}. A nice, as well as quite surprising, feature of the Cheeger problem is that it naturally appears in many different contexts such as image processing \cite{AltCasCha2005, BelCasNov2002,CasChaNov2011}, landslide modeling \cite{HIL05, HILR02,IL05}, and fracture mechanics \cite{Kel80}. For further discussion and applications the reader could refer to the surveys \cite{Leo, Parini2011}.  

In particular, the Cheeger problem is closely related to the theory of existence and uniqueness of graph of prescribed mean curvature \cite{Finn1984, Giusti1978, LS16a} which is the cornerstone of the theory of capillary surfaces (a comprehensive treatise is available in \cite{Finn1986}). 
We recall that for an open, bounded and connected set $\Omega \subset \R^n$, a function $u:\Omega \to \R$ is a classical solution to the prescribed mean curvature equation if
\begin{equation}\tag{PMC}\label{eq:PMC}
\div \left(\frac{\grad u(x)}{\sqrt{1+|\grad u(x)|^2}} \right) = H(x),\qquad \forall x\in \Omega,
\end{equation}
for a given Lipschitz function $H$ defined on $\Omega$. Under the assumption of $\mathcal{C}^2$ regularity of $\de \Omega$ (or piece-wise $C^1$ up to a $\H^{n-1}$-negligible set, see \cite[Chapter 6]{Finn1986}), the conditions 
\begin{equation}\label{eq:necandsuf}
\left | \int_A H\, dx\right| < P(A)\, \qquad \forall A\subsetneq \Omega\,,\qquad \qquad \left| \int_\Omega H\, dx \right| =P(\Omega)
\end{equation}
were proved to be necessary and sufficient to existence and uniqueness (up to vertical translations) by Giusti in  \cite{Giusti1978}. Then, whenever $H$ is a positive constant, these necessary and sufficient conditions read equivalently as
\begin{equation}\tag{MC}\label{eq:MC}
\text{$\Omega$ is a minimal Cheeger set\,,}
\end{equation}
that is, $\Omega$ is the unique minimizer of \eqref{eq:cheegerconstant}, and $H = h(\Omega)$.

In \cite{LS16a} we have extended Giusti's results on the existence of solutions to \eqref{eq:PMC} and on the characterization of the extremality condition \eqref{eq:necandsuf} to the class of weakly regular sets $\Omega$. These sets are defined as open bounded sets with finite perimeter such that
\begin{equation}\tag{PH}\label{eq:PH}
P(\Omega) = \H^{n-1}(\de \Omega)
\end{equation}
and
\begin{equation}\label{eq:Poincare}
\min \{P(E; \de \Omega), P(\Omega \setminus E; \de \Omega) \} \le k P(E; \Omega)\,,
\end{equation}
for some $k=k(\Omega)>0$ and for all measurable $E\subset \Omega$. 
We notice that any minimal Cheeger set $\Omega$, for which the intersection $\de\Omega \cap \Omega^{(1)}$ (where $\Omega^{(1)}$ denotes the set of points of density $1$ for $\Omega$) has $\H^{n-1}$-null measure, is weakly regular. This observation, which has been proved by the second author in \cite{Sar17}, provides a sufficient condition for the weak regularity, which can be more easily checked whenever the domain $\Omega$ is a minimal Cheeger set. As a consequence, for any $\Omega$ satisfying \eqref{eq:MC} and $\H^{n-1}(\Omega^{(1)}\cap \de \Omega)=0$, the constant mean curvature problem on $\Omega$, for the ``extremal'' value of the prescribed mean curvature $H=h(\Omega)$, admits a unique solution up to vertical translations. We remark that this extremal situation corresponds to the physical case of capillarity for perfectly wetting fluids in zero gravity. 

It is then natural to ask whether the weak regularity assumption is optimal with respect to the results proved in \cite{LS16a} on the prescribed mean curvature equation. Related to this question is the following one: does \eqref{eq:MC} imply $\H^{n-1}(\Omega^{(1)}\cap \de \Omega)=0$? In the affirmative case, any minimal Cheeger set would be automatically weakly regular. However, in Section \ref{sec:fatCantor} we negatively answer this question by exhibiting a minimal Cheeger set $\Omega_\e$ in the plane (with $\e$ a suitably small parameter) for which \eqref{eq:PH} does not hold. Of course this set needs to be such that $\Omega_\e^{(1)}\cap \de \Omega_\e$ has positive  $\H^{1}$-measure. This is ensured by the presence of a fat Cantor set contained in $\de \Omega_\e$, which is negligible for the perimeter measure. This lack of regularity prevents $\Omega_\e$ from being approximated in measure and perimeter by a sequence of smooth sets that are compactly contained in $\Omega_\e$ (see \cite{Schmidt2014}) and from admitting a trace operator from $BV(\Omega_\e)$ to $L^1(\de \Omega_\e)$ (see \cite[Chapter 9]{Mazya2011}). Indeed, this approximation property and the existence of a suitable trace operator represent two crucial tools used in \cite{LS16a}. However, one might expect that such solutions exist and are unique up to vertical translations, for each one of the two possible values of $H$ that correspond to counting or not the $\H^1$-measure of the fat Cantor set. At the same time, both solutions will become vertical at the reduced boundary of $\Omega_\e$. In conclusion, this example shows that it is not possible to extend the characterization of existence and uniqueness of solutions to \eqref{eq:PMC} given in \cite[Theorem 4.1]{LS16a} by dropping the assumption of weak regularity of the domain (see the discussion after the proof of Theorem \ref{thm:cantor}).

Then, in Section \ref{sec:emmenthal} we build a set $\Omega_{\bold 0}$ that turns out to be a minimal Cheeger satisfying $\H^1(\Omega_{\bold 0}^{(1)}\cap \de \Omega_{\bold 0})=0$, even though its boundary is not regular at all. More precisely, there is a set of positive $\H^1$-measure consisting of points of the reduced boundary of $\Omega_{\bold 0}$, at which $\de \Omega_{\bold 0}$ is not locally a graph. This example is constructed starting from the unitary disk $B_1$ and removing smaller and smaller disks accumulating towards $\de B_1$, so that the resulting set displays a kind of ``porosity''. This example is of interest for two reasons. First, $\Omega_{\bold 0}$ is weakly regular, so that the results of \cite{LS16a} apply (while the previous results due to Giusti and Finn do not) and one deduces the existence and uniqueness up to vertical translations of the solution to \eqref{eq:PMC} in the extremal case of $H(x) = h(\Omega_{\bold 0})$. Second, this example shows the following, quite remarkable fact. On the one hand, a generic small and smooth perturbation of the disk typically produces a dramatic change of the corresponding capillary solution, possibly leading even to a non-existence scenario. On the other hand, the construction of $\Omega_{\bold 0}$ shows that one can produce non-smooth perturbations of a disk that, instead, preserve existence and stability of the capillary solution. Indeed, from this ancestor set, one can build an increasing sequence of minimal Cheeger sets $\Omega_{\bold k}$ converging to the unitary disk both in volume and perimeter, in such a way that the stability result \cite[Proposition 4.4]{LS16a} holds.

Proving a set to be a minimal Cheeger is not an immediate fact. There are results allowing to infer whether a set is a Cheeger set or not and whether it is minimal or not but they apply only in limited circumstances (and limitedly to the plane), as for instance in the case of convex sets \cite{Besicovitch,KawFrid03} or simply connected sets with ``no bottlenecks'' \cite{LNS17}. Given a Cheeger set $E$ in $\Omega$, it is well known that $\de E\cap \Omega$ is an analytic hyper-surface (up to a  closed singular  set of Hausdorff dimension at least $n-8$). Then, our proof of \eqref{eq:MC} is achieved by showing that any Cheeger sets $E$ of $\Omega$ satisfies $\de E \cap \Omega = \emptyset$, which in turns says that the only Cheeger set can be $\Omega$ itself.

\section{Preliminaries}\label{sec:preliminary}

We first introduce some basic notations. We fix $n\ge 2$ and denote by $\R^{n}$ the Euclidean $n$-space. Let $E\subset \R^{n}$, then we denote by $\chi_{E}$ the characteristic function of $E$. For any $x\in \R^{n}$ and $r>0$ we denote by $B_{r}(x)$ the Euclidean open ball of center $x$ and radius $r$. Whenever $x=0$ we shall write $B_r$ instead of $B_{r}(0)$. Given two sets $E,F$, we denote their symmetric difference by $E\Delta F = (E\setminus F)\cup(F\setminus E)$. In order to deal with rescaled sets we introduce the notation $E_{x,r} = r^{-1}(E-x)$, where $E\subset \R^{n}$, $x\in\R^{n}$, and $r>0$. Given an open set $\Omega\subset \R^{n}$ we write $E\subset\subset \Omega$ whenever $E\subset \R^{n}$ is such that its topological closure $\overline E$ is a compact subset of $\Omega$. For any measurable set $E\subset \R^{n}$ we denote by $|E|$ its $n$-dimensional Lebesgue measure. Concerning $n$-dimensional (measurable) sets, we shall identify two such sets $E$ and $F$ as soon as $|E\Delta F|=0$, and write $E=F$ for the sake of brevity. Analogously the inclusions $E\subset F$ should be understood up to null sets.

\begin{defin}[Perimeter]\label{def:per}
Let $E$ be a Borel set in $\R^n$. We define the perimeter of $E$ in an open set $\Omega\subset \R^{n}$ as
\[
P(E; \Omega):=\sup \left\{ \int_{\Omega}\chi_{E}(x)\, \div g(x)\, dx\,:\ g\in C^1_c(\Omega;\, \R^n)\,, \|g\|_{\infty} \leq 1\right\}\,.
\]
We set $P(E) = P(E;\R^{n})$. If $P(E;\Omega)<\infty$ we say that $E$ is a set of finite perimeter in $\Omega$. In this case (see \cite{AFP}) one has that the perimeter of $E$ coincides with the total variation $|D\chi_{E}|$ of the vector--valued Radon measure $D\chi_{E}$ (the distributional gradient of the characteristic function $\chi_{E}$).
\end{defin}

\begin{defin}[P-decomposability]
A set $E\subset \R^n$ of finite perimeter is said to be P-decomposable if there exists a pair of disjoint Borel sets $S$ and $T$, such that $|S|, |T|>0$, $E = S\cup T$, and $P(E) = P(S)+P(T)$. Otherwise, $E$ is said to be P-indecomposable.
\end{defin}

\begin{defin}[Points of density $\alpha$]
Let $E$ be a Borel set in $\R^n$, $x\in \R^n$. If the limit
\[
\theta(E)(x) := \lim_{r\to0^+} \frac{|E\cap B_r(x)|}{\omega_nr^n}
\]
exists, it is called the density of $E$ at $x$. We define the set of points of density $\alpha\in [0,1]$ of $E$ as
\[
E^{(\alpha)} := \left\{ x\in \R^n\,:\, \theta(E)(x) = \alpha\right\}\,.
\]
We also define the essential boundary $\de^{e}E := \R^{n}\setminus (E^{(0)}\cup E^{(1)})$.
\end{defin}

\begin{thm}[De Giorgi Structure Theorem]\label{thm:DeGiorgi}
Let $E$ be a set of finite perimeter and let $\de^*E$ be the reduced boundary of $E$ defined as
\[
\de^* E:=\left \{x\in \de^{e}E\,:\, \lim_{r\to 0^+} \frac{D\chi_E(B_r(x))}{| D\chi_E|(B_r(x))} = -\nu_E(x) \in \mathbb{S}^{n-1} \right\}\,.
\]
Then,
\begin{itemize}
\item[(i)] $\de^{*}E$ is countably $\H^{n-1}$-rectifiable in the sense of Federer~\cite{FedererBOOK};

\item[(ii)] for all $x\in \de^{*}E$, $\chi_{E_{x,r}} \to \chi_{H_{\nu_E(x)}}$ in $L^{1}_{loc}(\R^{n})$ as $r\to 0^{+}$, where $H_{\nu_{E}(x)}$ denotes the half-space through $0$ whose exterior normal is $\nu_{E}(x)$;

\item[(iii)] for any Borel set $A$, $P(E;A) = \H^{n-1}(A\cap \de^{*}E)$, thus in particular $P(E)=\H^{n-1}(\partial^* E)$;

\item[(iv)] $\int_{E}\div g = \int_{\de^{*}E} g\cdot \nu_{E}\, d\H^{n-1}$ for any $g\in C^{1}_{c}(\R^{n};\R^{n})$.
\end{itemize}
\end{thm}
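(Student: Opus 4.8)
The statement is the classical structure theorem of De Giorgi, so the plan is to recall its proof along the lines of De Giorgi's original blow-up argument (see \cite{AFP, FedererBOOK}). The entire edifice rests on a single analytic engine, the \emph{blow-up theorem} (ii), from which the remaining three assertions are extracted; I would therefore organise the argument around it. The first preparatory step is to establish uniform density estimates at reduced boundary points. Fixing $x\in\de^*E$, the very definition of $\de^*E$ guarantees that the limit $\nu_E(x)\in\mathbb{S}^{n-1}$ exists, so in small balls $B_r(x)$ the vector measure $D\chi_E$ has total mass comparable to its component along $\nu_E(x)$. Coupling this with the divergence theorem and the relative isoperimetric inequality yields, for all sufficiently small $r$, two-sided bounds $c(n)\le r^{1-n}|D\chi_E|(B_r(x))\le C(n)$ together with lower volume bounds for both $E$ and its complement in $B_r(x)$. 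These estimates are what prevent any perimeter from being lost in the rescaling limit.

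The heart of the matter is then statement (ii). Translating and rotating so that $x=0$ and $\nu_E(0)=e_n$, I would consider the rescaled sets $E_{0,r}=r^{-1}E$. The density estimates furnish a uniform perimeter bound for $E_{0,r}$ over every fixed ball, so by $BV$-compactness some subsequence $\chi_{E_{0,r_j}}$ converges in $L^1_{loc}(\R^n)$ to $\chi_F$ for a set $F$ of locally finite perimeter. The definition of $\de^*E$ forces the rescaled directions $D\chi_{E_{0,r}}/|D\chi_{E_{0,r}}|$ to concentrate along $-e_n$, and lower semicontinuity transfers this to the limit, giving $D\chi_F=-e_n\,|D\chi_F|$, that is, $F$ has constant outer normal $e_n$. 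The decisive rigidity lemma states that a set of locally finite perimeter whose measure-theoretic normal is $|D\chi_F|$-a.e. a fixed vector must coincide with a half-space: reducing to one dimension, a monotone $BV$ function of the single variable $y\cdot e_n$ taking only the values $0$ and $1$ is the indicator of a half-line, whence $F=H_{e_n}$. Since the limit is identified independently of the subsequence, the convergence in fact holds as $r\to0^+$, which is (ii). A by-product is that the perimeter density equals one: $\lim_{r\to0^+} r^{1-n}|D\chi_E|(B_r(x))=\omega_{n-1}$ at every $x\in\de^*E$.

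It then remains to harvest (i), (iii) and (iv). The half-space blow-up provides an approximate tangent plane with normal $\nu_E(x)$ at each point of $\de^*E$, and this, combined with the density-one estimate, lets one cover $\de^*E$ (away from the excluded singular part) by countably many Lipschitz graphs via an implicit-function / Lipschitz-selection argument applied to the blow-up, yielding the countable $\H^{n-1}$-rectifiability of (i). Statement (iii) follows from a Besicovitch differentiation argument: the $(n-1)$-density of $|D\chi_E|$ equals one exactly on $\de^*E$, while Federer's theorem ensures $|D\chi_E|$ charges no set outside $\de^*E$, so that $|D\chi_E|=\H^{n-1}\llcorner\de^*E$, i.e. $P(E;A)=\H^{n-1}(A\cap\de^*E)$. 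Conclusion (iv) is then immediate: unwinding the distributional definition of $D\chi_E$ and using $D\chi_E=-\nu_E\,|D\chi_E|$ together with (iii) gives $\int_E\div g=\int_{\de^*E} g\cdot\nu_E\,d\H^{n-1}$ for every $g\in C^1_c(\R^n;\R^n)$.

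The main obstacle throughout is the blow-up step and, inside it, the half-space rigidity lemma, which is precisely where the bare finite-perimeter hypothesis is converted into sharp geometric information; everything else is essentially bookkeeping built on top of that limit, on the compactness and lower semicontinuity of the perimeter, and on classical differentiation theory for Radon measures.
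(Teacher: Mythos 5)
The paper offers no proof of this theorem: it is stated in the Preliminaries as a classical background result (De Giorgi's structure theorem), with the reader implicitly referred to the literature (\cite{FedererBOOK}, \cite{AFP}), so there is no internal argument to compare yours against. Your outline is a correct reconstruction of the standard De Giorgi blow-up proof as found in Ambrosio--Fusco--Pallara or Maggi: density estimates at reduced boundary points, $BV$ compactness of the rescalings, the rigidity lemma identifying sets of constant measure-theoretic normal with half-spaces (with the volume density bounds pinning the half-space through the origin), then rectifiability via Lipschitz decomposition, the identity $|D\chi_E|=\H^{n-1}\llcorner\de^*E$ by differentiation of measures, and Gauss--Green by unwinding the distributional definition. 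One small attribution slip: the fact that $|D\chi_E|$ is concentrated on $\de^*E$ is a consequence of the Besicovitch differentiation theorem applied to $D\chi_E$ with respect to $|D\chi_E|$ (together with the polar decomposition), not of Federer's structure theorem, which is the separate statement $\H^{n-1}(\de^eE\setminus\de^*E)=0$ that the paper records as Theorem \ref{thm:Fed}. This does not affect the soundness of your sketch.
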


\begin{thm}[Federer's Structure Theorem]\label{thm:Fed}
Let $E$ be a set of finite perimeter. Then, $\de^* E \subset E^{(1/2)} \subset \de^e E$ and one has
\[
\H^{n-1}\left(\de^e E \setminus \de^* E\right)=0 \,.
\]
\end{thm}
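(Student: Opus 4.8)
The plan is to prove the three assertions separately: the two inclusions follow at once from the blow-up analysis already available, whereas the essential content is the $\H^{n-1}$-negligibility of $\de^e E\setminus\de^* E$.

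\emph{The inclusions.} I would first treat $\de^* E\subset E^{(1/2)}$. Let $x\in\de^* E$. By item (ii) of Theorem~\ref{thm:DeGiorgi} the rescalings $E_{x,r}$ converge in $L^1_{loc}(\R^n)$, as $r\to0^+$, to the half-space $H_{\nu_E(x)}$ through the origin. Since $|E\cap B_r(x)|=r^n|E_{x,r}\cap B_1|$, passing to the limit gives
\[
\theta(E)(x)=\lim_{r\to0^+}\frac{|E_{x,r}\cap B_1|}{\omega_n}=\frac{|H_{\nu_E(x)}\cap B_1|}{\omega_n}=\frac12,
\]
so that $x\in E^{(1/2)}$. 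The inclusion $E^{(1/2)}\subset\de^e E$ is immediate, since a point at which the density exists and equals $\tfrac12$ belongs neither to $E^{(0)}$ nor to $E^{(1)}$.

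\emph{A lower perimeter-density bound on $\de^e E$.} Writing $m(r):=|E\cap B_r(x)|/(\omega_n r^n)$, the definition of the essential boundary unpacks to
\[
x\in\de^e E\iff \limsup_{r\to0^+}m(r)>0\ \text{ and }\ \liminf_{r\to0^+}m(r)<1.
\]
I would then establish that $\limsup_{r\to0^+}P(E;B_r(x))/r^{n-1}>0$ for every $x\in\de^e E$. The analytic input is the relative isoperimetric inequality in balls (see \cite{AFP}), which furnishes a dimensional constant $c(n)>0$ with $P(E;B_r(x))\ge c(n)\,r^{n-1}\big(\min\{m(r),1-m(r)\}\big)^{(n-1)/n}$, so it suffices to prove $\limsup_{r\to0^+}\min\{m(r),1-m(r)\}>0$. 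Suppose not. As $r\mapsto m(r)$ is continuous on $(0,\infty)$, for some $r_0>0$ the connected set $m((0,r_0))$ avoids $[\tfrac14,\tfrac34]$, hence lies either in $[0,\tfrac14)$ or in $(\tfrac34,1]$. In the first case $\min\{m(r),1-m(r)\}=m(r)$ for $r<r_0$, so the assumption forces $\limsup_{r\to0^+}m(r)=0$; in the second case $\min\{m(r),1-m(r)\}=1-m(r)$, so $\liminf_{r\to0^+}m(r)=1$. Either conclusion contradicts $x\in\de^e E$, proving the bound.

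\emph{Conclusion by density comparison.} Finally I would set $\mu:=|D\chi_E|$, which by item (iii) of Theorem~\ref{thm:DeGiorgi} equals $\H^{n-1}\llcorner\de^* E$, and decompose $\de^e E\setminus\de^* E=\bigcup_{k\in\N}A_k$ with $A_k:=\{x\in\de^e E\setminus\de^* E:\ \limsup_{r\to0^+}P(E;B_r(x))/r^{n-1}\ge 1/k\}$; the bound just proved guarantees that this union exhausts $\de^e E\setminus\de^* E$. The measure-theoretic input is the comparison between the upper spherical density of a Radon measure and $\H^{n-1}$ (see \cite{AFP}), which yields $\H^{n-1}(A_k)\le C(n)\,k\,\mu(A_k)$. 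But $A_k\cap\de^* E=\emptyset$, so $\mu(A_k)=\H^{n-1}(A_k\cap\de^* E)=0$ and hence $\H^{n-1}(A_k)=0$ for every $k$; taking the countable union gives $\H^{n-1}(\de^e E\setminus\de^* E)=0$. The main obstacle is precisely this last step: one must pair the relative isoperimetric inequality with the density-comparison theorem, the delicate point being the continuity–connectedness argument that upgrades positivity of the upper perimeter density from a generic set of points to \emph{every} point of the essential boundary.
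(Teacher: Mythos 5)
The paper never proves this statement: Theorem \ref{thm:Fed} is quoted in the Preliminaries as a classical background result (Federer's structure theorem), with the reader implicitly referred to the cited literature (\cite{AFP}, \cite{FedererBOOK}), so the only meaningful comparison is against the standard proof found there. Your argument is correct and is essentially that canonical proof: the inclusion $\de^*E\subset E^{(1/2)}$ via the blow-up of Theorem \ref{thm:DeGiorgi}(ii), the trivial inclusion $E^{(1/2)}\subset\de^e E$, and the negligibility of $\de^e E\setminus\de^*E$ by combining the relative isoperimetric inequality (giving positive upper $(n-1)$-density of $\mu=|D\chi_E|=\H^{n-1}\llcorner\de^*E$ at every point of $\de^e E$) with the density-comparison theorem for Radon measures applied to the sets $A_k$. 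In particular, your continuity--connectedness step correctly handles the one delicate point, namely that the conditions $\limsup_r m(r)>0$ and $\liminf_r m(r)<1$ are a priori realized along different sequences of radii.
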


In what follows, $\Omega$ will always denote a \textit{domain} of $\R^{n}$, i.e., an open connected set coinciding with its measure-theoretic interior. In other words, we assume that any point $x\in \R^{n}$, for which there exists $r>0$ with the property $|B_{r}(x)\setminus E|=0$, is necessarily contained in $\Omega$.

The next result combines \cite[Theorem 9.6.4]{Mazya2011} and \cite[Theorem 10 (a)]{AnzGia1978}.
\begin{thm}\label{thm:Mazya2011}
Let $\Omega\subset \R^{n}$ be a bounded domain with $P(\Omega) = \H^{n-1}(\de\Omega) <+\infty$. Then the following are equivalent:
\begin{itemize}
\item[(i)] there exists $k = k(\Omega)$ such that for all $E\subset \Omega$
\[
\min\{ P(E; \Omega^{\mathsf{c}}),P(\Omega \setminus E; \Omega^{\mathsf{c}}) \} \leq k P(E; \Omega);
\]

\item[(ii)] there exists a continuous trace operator from $BV(\Omega)$ to $L^{1}(\de\Omega)$ with the following property: any $\phi\in L^{1}(\de\Omega)$ is the trace of some $\Psi\in W^{1,1}(\R^{n})$ on $\de\Omega$.
\end{itemize}
\end{thm}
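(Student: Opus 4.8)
The plan is to obtain the equivalence by assembling the two cited results and carefully matching their hypotheses to the standing assumption $P(\Omega)=\H^{n-1}(\de\Omega)<+\infty$, rather than reproving either reference from scratch. The first thing I would record is a purely measure-theoretic reduction of condition (i). Since every competitor $E$ satisfies $E\subset\Omega$, the points of the exterior $(\overline\Omega)^{\mathsf c}$ have density $0$ for $E$ and hence lie in $E^{(0)}$; consequently $\de^*E\cap\Omega^{\mathsf c}=\de^*E\cap\de\Omega$ up to $\H^{n-1}$-null sets, and De Giorgi's structure theorem (Theorem~\ref{thm:DeGiorgi}(iii)) yields
\[
P(E;\Omega^{\mathsf c})=\H^{n-1}(\Omega^{\mathsf c}\cap\de^*E)=\H^{n-1}(\de\Omega\cap\de^*E)=P(E;\de\Omega),
\]
and likewise for $\Omega\setminus E$. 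Thus (i) is exactly the relative isoperimetric (Poincar\'e-type) inequality on $\Omega$ relative to its boundary measured by $\H^{n-1}$, i.e.\ the inequality \eqref{eq:Poincare} appearing in the definition of weak regularity. This identification is the bridge that lets the geometric form of (i) be compared with the trace-theoretic conditions in the two references.

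For the core equivalence I would invoke \cite[Theorem~9.6.4]{Mazya2011}, which under $P(\Omega)=\H^{n-1}(\de\Omega)<+\infty$ asserts that the relative isoperimetric inequality (i) is equivalent to the boundedness (continuity) of a trace operator $T\colon BV(\Omega)\to L^{1}(\de\Omega)$, where the target is taken with respect to $\H^{n-1}$ restricted to $\de\Omega$. This supplies \emph{both} directions between (i) and the trace-existence half of (ii): the reverse implication (from a bounded trace back to the inequality) is morally obtained by testing the trace estimate on characteristic functions $\chi_E$ and reading off $\|T\chi_E\|_{L^1(\de\Omega)}$ in terms of $\H^{n-1}(\de^*E\cap\de\Omega)=P(E;\Omega^{\mathsf c})$, the minimum between $E$ and $\Omega\setminus E$ reflecting that one may test with $\chi_E$ or with $\chi_{\Omega\setminus E}=1-\chi_E$. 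The delicate point I anticipate here is verifying that Maz'ya's hypothesis, phrased via an area/capacity-type isoperimetric condition, coincides with (i) once the identification $P(\,\cdot\,;\Omega^{\mathsf c})=P(\,\cdot\,;\de\Omega)$ of the first paragraph is in force.

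The remaining, ``surjectivity-type'' assertion of (ii)---that every $\phi\in L^{1}(\de\Omega)$ arises as the trace on $\de\Omega$ of some $\Psi\in W^{1,1}(\R^{n})$---is then the content of \cite[Theorem~10(a)]{AnzGia1978}, which describes the trace space of $BV$-functions and, under \eqref{eq:PH}, identifies it with all of $L^{1}(\de\Omega)$. Since this property enlarges (ii) but is not needed for the reverse implication (ii)$\Rightarrow$(i), which uses only the trace-existence half, the two references combine cleanly: Maz'ya gives (i)$\iff$[continuity of $T$], and Anzellotti--Giaquinta upgrades the existence of $T$ to the full statement (ii).

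I expect the main obstacle to be not any single deep inequality but the precise reconciliation of the three formalisms---Maz'ya's isoperimetric condition, the Anzellotti--Giaquinta description of the trace space, and the geometric form of (i). In particular I would pay close attention to the role of the standing assumption \eqref{eq:PH}: it is exactly what guarantees that the relevant boundary measure is $\H^{n-1}$ restricted to $\de\Omega$, equivalently the perimeter measure of $\Omega$, so that the target space $L^{1}(\de\Omega)$ in (ii) is unambiguous and the two cited results can be glued without a mismatch of boundary measures.
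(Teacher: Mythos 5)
Your proposal is correct and takes essentially the same route as the paper: the paper offers no independent proof of this statement, presenting it precisely as the combination of \cite[Theorem 9.6.4]{Mazya2011} (the equivalence of the relative isoperimetric inequality with continuity of the trace operator) and \cite[Theorem 10 (a)]{AnzGia1978} (the surjectivity of the trace onto $L^{1}(\de\Omega)$ under \eqref{eq:PH}), which is exactly the assembly you describe. Your additional observation that $P(E;\Omega^{\mathsf{c}})=P(E;\de\Omega)$ for $E\subset\Omega$, via Theorems \ref{thm:DeGiorgi} and \ref{thm:Fed}, is the correct glue identifying condition (i) with \eqref{eq:Poincare}.
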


\begin{defin}[Cheeger constant and Cheeger set]
Let $\Omega \subset \mathbb{R}^n$ be an open, connected and bounded set. We define the \textit{Cheeger constant} of $\Omega$ as
\begin{equation}\label{eq:Cheegerconstant}
h(\Omega) := \inf \left\{P(A)/|A|\,:\ A\subset \Omega,\ |A|>0\right\}\,.
\end{equation}
Any Borel set $E\subset \Omega$ for which $P(E)/|E| = h(\Omega)$ is called a \textit{Cheeger set} of $\Omega$. 
\end{defin}

We here report some useful results on the Cheeger problem. More details are available in the survey papers \cite{Leo, Parini2011}.

\begin{prop}[Monotonicity of the Cheeger constant]\label{prop:monotonicity}
Given any two open, connected and bounded sets $\Omega_1 \subset \Omega_2$ one has $h(\Omega_1) \geq h(\Omega_2)$.
\end{prop}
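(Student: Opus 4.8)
The plan is to read the inequality off directly from the definition \eqref{eq:Cheegerconstant} of the Cheeger constant, exploiting nothing more than the monotonicity of the infimum under enlargement of the admissible class. First I would observe that $h(\Omega)$ is, by definition, the infimum of the ratio $P(A)/|A|$ taken over the family of competitors $\{A\subset\Omega : |A|>0\}$. The entire content of the statement is encoded in how this family depends on $\Omega$.

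Next I would use the hypothesis $\Omega_1\subset\Omega_2$. Any competitor for $h(\Omega_1)$, that is, any measurable set $A$ with $A\subset\Omega_1$ and $|A|>0$, automatically satisfies $A\subset\Omega_2$ and $|A|>0$, and is therefore a competitor for $h(\Omega_2)$ as well. In other words, the admissible class entering the definition of $h(\Omega_1)$ is contained in the one entering the definition of $h(\Omega_2)$. Applying the elementary fact that the infimum of a fixed functional over a larger set is no greater than its infimum over a smaller one, here to the functional $A\mapsto P(A)/|A|$ over the two nested admissible classes, yields $h(\Omega_2)\le h(\Omega_1)$, which is precisely the claim.

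There is no genuine obstacle in this argument: the result is an immediate consequence of the definition and uses neither the structure theory of sets of finite perimeter nor any regularity of $\de\Omega_1$ or $\de\Omega_2$. The only point deserving a moment's attention is that the positivity constraint $|A|>0$ is preserved under the inclusion of competitors, which is trivially the case. I would also note that connectedness and boundedness of the $\Omega_i$ play no role in the proof and are assumed only to guarantee that the two Cheeger constants are well defined.
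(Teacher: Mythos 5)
Your proof is correct: the inclusion of the competitor class for $h(\Omega_1)$ into that for $h(\Omega_2)$ immediately gives $h(\Omega_2)\le h(\Omega_1)$, and this is exactly the standard argument. The paper itself states Proposition \ref{prop:monotonicity} without proof (deferring to the cited surveys), and your argument is precisely the one intended there, so there is nothing to add.
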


\begin{thm}[Existence of Cheeger sets]\label{thm:existenceofCS}
Let $\Omega \subset \R^n$ be a bounded open set. Then the $\inf$ in \eqref{eq:Cheegerconstant} is a $\min$, therefore at least one Cheeger set $E$ for $\Omega$ exists.
\end{thm}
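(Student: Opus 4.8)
The plan is to apply the direct method of the calculus of variations. First I would fix a minimizing sequence $E_j \subset \Omega$ with $|E_j|>0$ and $P(E_j)/|E_j| \to h(\Omega)$, and aim to extract a subsequence whose characteristic functions converge in $L^1$ to a set $E$ realizing the infimum. The two classical ingredients are compactness of sets of finite perimeter and lower semicontinuity of the perimeter, both of which are available because $\Omega$ is bounded, so that every competitor lies in a fixed ball. The only genuinely delicate point is to prevent the minimizing sequence from vanishing in measure, since in that case the quotient would degenerate and the limit would carry no information.

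The key a priori estimate comes from the isoperimetric inequality $P(A) \geq c_n |A|^{(n-1)/n}$, valid for every set of finite perimeter $A$. Dividing by $|A|$ gives $P(A)/|A| \geq c_n |A|^{-1/n}$, and since $A\subset\Omega$ forces $|A|\leq |\Omega|$, this yields $h(\Omega)\geq c_n |\Omega|^{-1/n}>0$; in particular the Cheeger constant is strictly positive and finite. Applying the same inequality along the minimizing sequence shows that $c_n |E_j|^{-1/n}\leq P(E_j)/|E_j|\to h(\Omega)$, whence the measures stay bounded away from zero: there is $\delta>0$ with $|E_j|\geq \delta$ for all large $j$. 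This is the step I expect to be the crux of the argument, as without it the limit set could be null and the ratio meaningless.

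With the measures bounded below I would next obtain a uniform perimeter bound: from $P(E_j)/|E_j|\to h(\Omega)$ and $|E_j|\leq |\Omega|$ one gets $P(E_j)\leq C$ for some constant $C$. Since all the $E_j$ are contained in a fixed ball $B_R\supset\Omega$, the compactness theorem for sets of finite perimeter (see \cite{AFP}) provides a subsequence $E_{j_k}$ and a set $E$ with $\chi_{E_{j_k}}\to\chi_E$ in $L^1$. The inclusions $E_{j_k}\subset\Omega$ pass to the limit, so $E\subset\Omega$ (up to null sets), while $|E|=\lim_k |E_{j_k}|\geq \delta>0$, so $E$ is an admissible competitor.

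It then remains to verify that $E$ is a minimizer. By lower semicontinuity of the perimeter under $L^1$ convergence one has $P(E)\leq \liminf_k P(E_{j_k})$, and since $|E_{j_k}|\to |E|>0$ this may be combined with the convergence of the ratios to give
\[
\frac{P(E)}{|E|} \leq \liminf_k \frac{P(E_{j_k})}{|E_{j_k}|} = h(\Omega).
\]
On the other hand $E\subset\Omega$ with $|E|>0$ is admissible in \eqref{eq:Cheegerconstant}, so $P(E)/|E|\geq h(\Omega)$ by definition of the infimum. Hence $P(E)/|E|=h(\Omega)$, which shows that the infimum is attained and that $E$ is a Cheeger set of $\Omega$.
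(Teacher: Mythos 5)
Your proof is correct and is precisely the standard direct-method argument (isoperimetric lower bound to prevent vanishing of the minimizing sequence, BV compactness in a fixed ball, lower semicontinuity of the perimeter) that the paper itself relies on: the theorem is quoted there without proof, with a pointer to the surveys \cite{Leo, Parini2011}, where this same argument appears. The only point stated without justification is the finiteness of $h(\Omega)$, but that is immediate since $\Omega$ is open and nonempty, so any small ball contained in $\Omega$ is an admissible competitor with finite ratio.
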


\begin{prop}[Properties of planar Cheeger sets]\label{prop:proprietaCS}
Let $\Omega \subset \R^2$ be an open, bounded and connected set and $E$ a Cheeger set for $\Omega$. Then the following hold
\begin{itemize}
\item[(i)] the free boundary of $E$, i.e. $\de E \cap \Omega$, is analytical and has constant curvature equal to $h(\Omega)$, hence $\de E \cap \Omega$ is a union of arcs of circle of radius $r=h^{-1}(\Omega)$;
\item[(ii)] any arc in $\de E \cap \Omega$ can not be longer than $\pi r$;
\item[(iii)] any arc in $\de E \cap \Omega$ meets tangentially $\de \Omega$ whenever they meet in a regular point of $\de \Omega$;
\item[(iv)] the volume of $E$ is bounded from below as follows
\begin{equation}\label{eq: bound on volume}
|E| \geq \pi \left (\frac{2}{h(\Omega)} \right)^2\,.
\end{equation}
\end{itemize}
\end{prop}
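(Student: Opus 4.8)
The plan is to base the whole proof on the observation that a Cheeger set $E$ is an \emph{unconstrained} minimizer of the functional $\mathcal F(F) = P(F) - h(\Omega)\,|F|$ among all measurable $F\subset\Omega$. Indeed, by the definition of $h(\Omega)$ one has $P(F)\ge h(\Omega)\,|F|$ for every such $F$, hence $\mathcal F(F)\ge 0$, while $\mathcal F(E)=P(E)-h(\Omega)\,|E|=0$ because $E$ realizes the infimum. To get (i) I would compare $E$ with competitors $F$ differing from it only on a compact subset of $\Omega$: using $\bigl|\,|F|-|E|\,\bigr|\le |F\Delta E|$ one deduces $P(F)\le P(E)+h(\Omega)\,|F\Delta E|$, i.e. $E$ is a $(\Lambda,r_0)$-perimeter minimizer in $\Omega$ with $\Lambda=h(\Omega)$. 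The regularity theory for almost minimizers then gives that the free boundary $\de E\cap\Omega$ coincides with $\de^*E\cap\Omega$ and is a $C^{1,\alpha}$ curve (the singular set is empty since $n=2<8$); the first variation forces the scalar curvature to equal the constant $h(\Omega)$, and constant-curvature, hence analytic, curves in the plane are arcs of circles of radius $r=h^{-1}(\Omega)$.

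Item (iv) is then immediate from the planar isoperimetric inequality $4\pi|E|\le P(E)^2$ combined with $P(E)=h(\Omega)\,|E|$: dividing $4\pi|E|\le h^2(\Omega)\,|E|^2$ by $|E|>0$ yields $|E|\ge 4\pi/h^2(\Omega)=\pi\,(2/h(\Omega))^2$. For (ii) I would argue by stability. Since $E$ minimizes $\mathcal F$ with no volume constraint, every normal perturbation $\phi\,\nu_E$ of an arc of $\de E\cap\Omega$ that keeps the two endpoints fixed (so that the perturbed set stays in $\Omega$ for small times) must have nonnegative second variation of $\mathcal F$. On a circular arc of curvature $h=h(\Omega)$ this second variation is the Jacobi form $\int_0^L\bigl((\phi')^2-h^2\phi^2\bigr)\,ds$, where $L$ is the length of the arc and $s$ its arclength. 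Testing with $\phi(s)=\sin(\pi s/L)$, which vanishes at both endpoints, gives $\tfrac{L}{2}\bigl((\pi/L)^2-h^2\bigr)$, strictly negative as soon as $L>\pi/h=\pi r$; this contradicts minimality, so no arc can be longer than $\pi r$.

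For (iii) I would perform a blow-up of $E$ at a regular ($C^1$) point $p\in\de\Omega$ where an arc meets $\de\Omega$. Since $\de\Omega$ is $C^1$ at $p$ it blows up to a line $\ell$, and the constraint $E\subset\Omega$ forces the blow-up limit of $E$ to be contained in the half-plane bounded by $\ell$; being a $\Lambda$-minimizer, $E$ has perimeter-minimizing blow-ups, which in the plane are half-planes. A transversal contact at angle $\beta\ne\pi$ would instead produce a wedge of opening $\beta$, whose boundary is strictly longer than $\ell$ on every ball and hence not minimizing (equivalently, a corner-cutting competitor would strictly lower $\mathcal F$). Thus the tangent of the arc at $p$ must coincide with $\ell$, which is precisely the tangency asserted in (iii).

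The interior facts (i), (ii) and (iv) reduce to the almost-minimizer regularity theory, a one-dimensional stability computation, and the isoperimetric inequality, and I expect them to be routine. The main obstacle is the boundary behaviour in (iii): ruling out a non-tangential (corner) contact at regular points of $\de\Omega$ relies on boundary regularity for the unilateral obstacle problem and a careful blow-up analysis, which is genuinely more delicate than the interior arguments, and where the $C^1$ (rather than smooth) hypothesis on $\de\Omega$ at the contact point must be used with care.
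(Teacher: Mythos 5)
The paper itself contains no proof of this proposition: it is stated as known background, with the reader referred to the surveys cited there (\cite{Leo,Parini2011}), and is then used as a black box in Sections 2 and 3. So there is no in-paper argument to compare yours against line by line; what you have written is a self-contained proof, and it is essentially the standard one from the literature. The reduction to unconstrained minimization of $\mathcal{F}=P-h(\Omega)|\cdot|$, the resulting $(\Lambda,r_0)$-minimality and the $C^{1,\alpha}$-to-analytic regularity with constant curvature $h(\Omega)$ for (i), and the isoperimetric inequality for (iv) are exactly how these facts are usually established, and those parts are correct as written. Your second-variation proof of (ii) is a legitimate alternative to the usual cut-and-paste arguments: an arc of curvature $h$ longer than $\pi/h$ supports a negative direction for the Jacobi form $\int_0^L\bigl((\phi')^2-h^2\phi^2\bigr)\,ds$, which is precisely the conjugate-point statement for circles.

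Two points need tightening. First, in (ii) the test function $\sin(\pi s/L)$ vanishes only \emph{at} the endpoints of the arc, which lie on $\partial\Omega$, whereas admissibility of the two-sided normal perturbation requires $\phi$ compactly supported in the open arc (near the endpoints there is no room inside $\Omega$). This is harmless: since $L>\pi r$ is a strict inequality, test instead on a compactly contained subarc whose length still exceeds $\pi r$. Second, in (iii) the appeal to blow-ups and to ``perimeter-minimizing blow-ups are half-planes'' is the one step that is not justified as written: at a point of $\partial\Omega$ the blow-up minimizes perimeter only subject to the constraint of lying in a half-plane, and one must still rule out constrained minimizing sectors, which is not an off-the-shelf classification. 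Your parenthetical corner-cutting remark is the correct and fully elementary fix, and it makes the blow-up machinery unnecessary: since the free boundary is a genuine circular arc up to its endpoint $p$ and $\partial\Omega$ is regular at $p$, cutting the corner of $E$ at scale $s$ with a chord removes at least $2s$ of perimeter, adds at most $2s\sin(\beta/2)+o(s)$ where $\beta$ is the opening angle of $E$ at $p$, and changes the area by $O(s^2)$; hence $\mathcal{F}$ strictly decreases for small $s$ unless $\beta=\pi$, i.e.\ unless the contact is tangential. I would write (iii) that way and drop the blow-up language entirely.
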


Notice that if $E_1,E_2$ are Cheeger sets of $\Omega$, and if $E_1\cap E_2$ is non-negligible, then one can show that $E_1\cap E_2$ is a Cheeger set (see for instance \cite[Proposition 2.5]{LeoPra}). Coupling this fact with Proposition \ref{prop:proprietaCS}(iv) one easily deduces the existence of \textit{minimal Cheeger sets} (with respect to inclusion) \textit{within any bounded open set $\Omega$}. If $\Omega$ is the unique minimizer of $h(\Omega)$ we shall say it is \emph{a minimal Cheeger set}.

\section{A minimal Cheeger set with a fat Cantor set in its boundary}\label{sec:fatCantor}

In this section we provide an example of a minimal Cheeger set, whose perimeter is strictly smaller than the $\H^{1}$-measure of its topological boundary, that is, it does not verify property \eqref{eq:PH}. We also note that, as a consequence of the construction, it is not possible to find a Lebesgue-equivalent open set for which \eqref{eq:PH} holds. Let us start noticing the following, general fact.

\begin{prop}\label{prop:selfCeD1}
If $\Omega$ is a minimal Cheeger set such that $\H^{n-1}(\Omega^{(1)}\cap \de \Omega)=0$, then $P(\Omega) = \H^{n-1}(\de \Omega)$.
\end{prop}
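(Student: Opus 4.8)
The plan is to show that the topological boundary $\de\Omega$ decomposes (up to the hypothesis-negligible part) into the reduced boundary plus pieces sitting inside $\Omega^{(0)}\cup\Omega^{(1)}$, and that the density-$1$ part is exactly what the hypothesis kills. First I would write the whole space as the disjoint union $\R^n = \Omega^{(0)}\cup\Omega^{(1)}\cup\de^e\Omega$, where $\de^e\Omega$ is the essential boundary. Since $\Omega$ is a domain in the sense fixed in the Preliminaries (it coincides with its measure-theoretic interior), every point of $\Omega^{(1)}$ for which $\Omega$ has full density in a ball already lies in $\Omega$, so $\Omega^{(1)}\cap\de\Omega$ consists of genuine boundary points of density $1$. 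The key decomposition is
\begin{equation*}
\de\Omega = (\de\Omega\cap\Omega^{(0)}) \cup (\de\Omega\cap\Omega^{(1)}) \cup (\de\Omega\cap\de^e\Omega)\,.
\end{equation*}

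Next I would handle the three pieces separately. The middle piece has $\H^{n-1}(\de\Omega\cap\Omega^{(1)})=0$ by hypothesis. For the last piece, I would invoke Federer's Structure Theorem (Theorem \ref{thm:Fed}): $\H^{n-1}(\de^e\Omega\setminus\de^*\Omega)=0$, so up to $\H^{n-1}$-null sets $\de\Omega\cap\de^e\Omega$ coincides with $\de\Omega\cap\de^*\Omega$, and by De Giorgi's Structure Theorem (Theorem \ref{thm:DeGiorgi}(iii)) $\H^{n-1}(\de^*\Omega)=P(\Omega)$. The crucial point to nail down is that the first piece, $\de\Omega\cap\Omega^{(0)}$, is \emph{empty}: a topological boundary point $x$ of density $0$ would have $|\Omega\cap B_r(x)|$ arbitrarily small relative to $r^n$, yet since $x\in\de\Omega$ every ball $B_r(x)$ meets $\Omega$; the obstruction to overcome here is ruling out such thin-but-nonempty intersections, which requires using that $\Omega$ is open together with the density-$0$ condition. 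In fact for an open set $x\in\Omega$ forces density $1$, and $x\notin\overline{\Omega}$ forces density $0$ with $x\notin\de\Omega$; so points of $\de\Omega$ are never interior, and I would argue that a density-$0$ point of $\de\Omega$ would have to be an accumulation point contributing nothing to the reduced boundary—this piece must be shown $\H^{n-1}$-negligible (or, more cleanly, folded into the essential-boundary estimate since $\Omega^{(0)}\cap\de^*\Omega=\emptyset$ automatically).

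The main obstacle I anticipate is precisely controlling $\de\Omega\cap\Omega^{(0)}$: topological boundary points of density $0$ can exist (the fat-Cantor construction later in the paper lives exactly on such delicate sets), so the inequality $\H^{n-1}(\de\Omega)\le P(\Omega)$ is the nontrivial direction, whereas $P(\Omega)=\H^{n-1}(\de^*\Omega)\le\H^{n-1}(\de\Omega)$ is immediate from $\de^*\Omega\subset\de\Omega$. I would therefore concentrate on proving that, under the standing hypothesis $\H^{n-1}(\Omega^{(1)}\cap\de\Omega)=0$, the set $\de\Omega\setminus\de^*\Omega$ is $\H^{n-1}$-negligible, writing
\begin{equation*}
\de\Omega\setminus\de^*\Omega \subset (\de\Omega\cap\Omega^{(0)})\cup(\de\Omega\cap\Omega^{(1)})\cup(\de^e\Omega\setminus\de^*\Omega)\,,
\end{equation*}
where the last set is null by Theorem \ref{thm:Fed}, the middle is null by hypothesis, and the first is null because density-$0$ boundary points cannot carry positive $\H^{n-1}$-measure for a set of finite perimeter (any such point is neither in the reduced boundary nor in the half-density set $\Omega^{(1/2)}$). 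Combining, $\H^{n-1}(\de\Omega)=\H^{n-1}(\de^*\Omega)=P(\Omega)$, which is the desired identity \eqref{eq:PH}. The minimal-Cheeger property of $\Omega$ is not actually needed beyond guaranteeing finite perimeter and the domain normalization, so I would remark that the statement is really a structural consequence of finite perimeter plus the density hypothesis.
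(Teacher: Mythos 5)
Your decomposition $\de\Omega = (\de\Omega\cap\Omega^{(0)})\cup(\de\Omega\cap\Omega^{(1)})\cup(\de\Omega\cap\de^{e}\Omega)$ is the right skeleton, and your treatment of the last two pieces (the hypothesis for the middle one, Theorems \ref{thm:Fed} and \ref{thm:DeGiorgi}(iii) for the last one) is exactly how the paper concludes. The genuine gap is in the first piece. Your claim that ``density-$0$ boundary points cannot carry positive $\H^{n-1}$-measure for a set of finite perimeter'' is false, and the justification you offer (such points lie neither in $\de^{*}\Omega$ nor in $\Omega^{(1/2)}$) is a non sequitur: Federer's theorem controls only $\de^{e}\Omega\setminus\de^{*}\Omega$, while points of $\Omega^{(0)}$ lie \emph{outside} the essential boundary altogether, so the structure theory gives no bound whatsoever on their $\H^{n-1}$-measure. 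Concretely, take a dense sequence $\{q_k\}_k$ in $[0,1]^2$ and set $\Omega=\bigcup_k B_{r_k}(q_k)$ (made connected by thin tubes) with $\sum_k r_k<\infty$ and $\sum_k r_k^2$ small: this is a domain of finite perimeter, yet $\de\Omega\supset[0,1]^2\setminus\Omega$ has positive Lebesgue measure, and Lebesgue-a.e.\ point of that set has density $0$ for $\Omega$, so $\H^{1}(\de\Omega\cap\Omega^{(0)})=\infty$ while the density-$1$ part of $\de\Omega$ can be arranged to be $\H^1$-null. Hence finite perimeter plus the density hypothesis plus the domain normalization do \emph{not} imply the conclusion, and your closing remark that the minimal-Cheeger property is not really needed is exactly backwards.

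Indeed, controlling $\de\Omega\cap\Omega^{(0)}$ is precisely where the paper spends the minimality assumption: by \cite[Theorem 3.4]{Sar17}, minimality together with $\H^{n-1}(\Omega^{(1)}\cap\de\Omega)=0$ yields the relative isoperimetric inequality $\min\{P(A;\Omega^{\mathsf{c}}),P(\Omega\setminus A;\Omega^{\mathsf{c}})\}\le k\,P(A;\Omega)$ for all $A\subset\Omega$, and by \cite[Lemma 3.5]{Sar17} this inequality forces $\de\Omega\cap\Omega^{(0)}=\emptyset$; only after that do Theorems \ref{thm:DeGiorgi} and \ref{thm:Fed} finish the argument in the way you describe. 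To repair your proof you must supply this step (or an equivalent mechanism ruling out density-$0$ boundary points); there is no purely structural substitute. As a side remark, your aside about the fat Cantor construction is also off: the fat Cantor set of Section \ref{sec:fatCantor} consists of density-$1$ points of $\Omega_\e$, not density-$0$ points --- which is exactly why it is the hypothesis $\H^{n-1}(\Omega^{(1)}\cap\de\Omega)=0$ that it violates.
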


\begin{proof}
Being $\Omega$ a minimal Cheeger set such that $\H^{n-1}(\Omega^{(1)}\cap \de \Omega)=0$, by \cite[Theorem 3.4]{Sar17} the following relative isoperimetric inequality holds:
\[
\min\{ P(A; \Omega^{\mathsf{c}}),P(\Omega \setminus A; \Omega^{\mathsf{c}}) \} \leq k\, P(A; \Omega) \qquad \forall\, A \subset \Omega\,
\]
which in turn implies $\de \Omega \cap \Omega^{(0)} = \emptyset$, as proved in the same paper (see \cite[Lemma 3.5]{Sar17}). The thesis then follows at once by applying Theorems \ref{thm:DeGiorgi} and \ref{thm:Fed}.
\end{proof}

In virtue of Proposition \ref{prop:selfCeD1}, in order to build a minimal Cheeger set $\Omega$ that does not satisfy \eqref{eq:PH} we must ensure that the set of points of density $1$ for $\Omega$ that are also contained in $\de\Omega$ has positive $\H^{n-1}$-measure.

Consider the concentric balls $B_1,B_{\e} \subset \R^2$, where the radius $\epsilon<1$ will be fixed later on. We now define a set $F^\e \subset B_\epsilon$ whose topological boundary contains a ``fat'' Cantor set with positive $\H^{1}$-measure. Consequently, the open set $\Omega := B_1 \setminus \overline{F^\e}$ will be shown to satisfy \eqref{eq:MC}, while \eqref{eq:PH} fails.

We consider the segment $C^{\e}_{0} = [-\epsilon, \epsilon] \times \{0\}\subset \overline{B_\epsilon}$ and iteratively construct a  decreasing sequence $C^{\e}_{i}$, $i\in \N$, of compact subsets of $C^{\e}_{0}$, obtained at each step $i$ of the construction by removing $2^{i-1}$ open segments $S^{i}_{j}$, $j=1,\dots,2^{i-1}$, of length 
\[
\H^{1}(S^{i}_{j}) = 2^{1-2i}\H^{1}(C^{\e}_{i-1}),\qquad \text{for all }j,
\]
and placed in the middle of each closed segment of $C^{\e}_{i-1}$, so that the total loss of length at step $i$ equals $2^{-i}\H^{1}(C^{\e}_{i-1})$. Consequently, the set $C^{\e} = \lim_{i\to\infty}C^{\e}_{i}$ satisfies 
\[
\H^{1}(C^{\e}) = 2\e\prod_{k=1}^\infty \left(1-2^{-k} \right) >0\,.
\]
The strict positivity of the infinite product can be easily inferred by the fact that the series ${\sum_{k=1}^{\infty}\log(1 - 2^{-k})}$ is convergent. $C^{\e}$ is a so-called ``fat'' Cantor set.
  
Let now $\delta>0$ be fixed. We set
\[
f_\delta(x) = \begin{cases}
1-\sqrt{1- \left(|x|-\delta\right)^2} & \text{if }x\in (-\delta, \delta),\\
0 & \text{otherwise,}
\end{cases}
\]
and
\[
F_\delta =\{ (x,y)\in \R^2\,:\, |x|\le \delta,\ |y|\leq f_\delta(x) \}\,,
\]
which is depicted in Figure \ref{fig:fatcantor}.

\begin{figure}[t]
\centering
    \begin{minipage}{0.48\textwidth}
        \centering
       \begin{tikzpicture}[line cap=round,line join=round,>=triangle 45,x=.8cm,y=.8cm]
\clip(-4.5,-2.9) rectangle (4.5,3.1);
\draw [shift={(-4.,10.)}] plot[domain=4.71238898038469:5.123312689317197,variable=\t]({1.*10.*cos(\t r)+0.*10.*sin(\t r)},{0.*10.*cos(\t r)+1.*10.*sin(\t r)});
\draw [shift={(-4.,-10.)}] plot[domain=4.71238898038469:5.123312689317197,variable=\t]({1.*10.*cos(\t r)+0.*10.*sin(\t r)},{0.*10.*cos(\t r)+-1.*10.*sin(\t r)});
\draw [shift={(4.,10.)}] plot[domain=4.71238898038469:5.123312689317197,variable=\t]({-1.*10.*cos(\t r)+0.*10.*sin(\t r)},{0.*10.*cos(\t r)+1.*10.*sin(\t r)});
\draw [shift={(4.,-10.)}] plot[domain=4.71238898038469:5.123312689317197,variable=\t]({-1.*10.*cos(\t r)+0.*10.*sin(\t r)},{0.*10.*cos(\t r)+-1.*10.*sin(\t r)});
\draw (-0.3260526206307281,0.16660651928268677) node[anchor=north west] {$F_{\delta}$};
\begin{scriptsize}
%\draw [fill=qqqqtt] (-4.,0.) circle (0.5pt);
\draw[color=qqqqtt] (-4.125093508821784,0.25444561496340506) node {$-\delta$};
%\draw [fill=qqqqtt] (4.,0.) circle (0.5pt);
\draw[color=qqqqtt] (4.098841824285444,0.25444561496340506) node {$\delta$};
\end{scriptsize}
\end{tikzpicture}
\caption{The shape of the planar set $F_{\delta}$}
\label{fig:fatcantor}
    \end{minipage}\hfill
    \begin{minipage}{0.48\textwidth}
        \centering
        \begin{tikzpicture}[line cap=round,line join=round,>=triangle 45,x=.10cm,y=.10cm]
\clip(-13.,-30.2) rectangle (27.7,10.2);
\draw [line width=0.8pt] (-0.15350975133565697,-10.)-- (0.12808264073245562,-10.);
\draw [line width=0.8pt] (14.818808437007302,-10.)-- (14.53721604493919,-10.);
\draw [line width=0.8pt] (-3.680165577246429,-10.)-- (-3.961757969314542,-10.);
\draw [line width=0.8pt] (18.345464262918078,-10.)-- (18.627056654986188,-10.);
\draw [shift={(1.1663246714179119,13.)},line width=0.8pt]  plot[domain=4.71238898038469:4.984096843512072,variable=\t]({1.*23.*cos(\t r)+0.*23.*sin(\t r)},{0.*23.*cos(\t r)+1.*23.*sin(\t r)});
\draw [shift={(13.498974014253736,13.)},line width=0.8pt]  plot[domain=4.71238898038469:4.984096843512072,variable=\t]({-1.*23.*cos(\t r)+0.*23.*sin(\t r)},{0.*23.*cos(\t r)+1.*23.*sin(\t r)});
\draw [shift={(-2.3,-9.57984)},line width=0.8pt]  plot[domain=4.707074492493774:5.869137181301326,variable=\t]({1.*0.420165933523451*cos(\t r)+0.*0.420165933523451*sin(\t r)},{0.*0.420165933523451*cos(\t r)+1.*0.420165933523451*sin(\t r)});
\draw [shift={(-1.533675328582084,-9.57984)},line width=0.8pt]  plot[domain=4.707074492493774:5.869137181301326,variable=\t]({-1.*0.420165933523451*cos(\t r)+0.*0.420165933523451*sin(\t r)},{0.*0.420165933523451*cos(\t r)+1.*0.420165933523451*sin(\t r)});
\draw [shift={(1.1663246714179119,-33.)},line width=0.8pt]  plot[domain=4.71238898038469:4.984096843512072,variable=\t]({1.*23.*cos(\t r)+0.*23.*sin(\t r)},{0.*23.*cos(\t r)+-1.*23.*sin(\t r)});
\draw [shift={(13.498974014253736,-33.)},line width=0.8pt]  plot[domain=4.71238898038469:4.984096843512072,variable=\t]({-1.*23.*cos(\t r)+0.*23.*sin(\t r)},{0.*23.*cos(\t r)+-1.*23.*sin(\t r)});
\draw [shift={(-1.533675328582086,-10.42016)},line width=0.8pt]  plot[domain=4.707074492493774:5.869137181301326,variable=\t]({-1.*0.420165933523451*cos(\t r)+0.*0.420165933523451*sin(\t r)},{0.*0.420165933523451*cos(\t r)+-1.*0.420165933523451*sin(\t r)});
\draw [shift={(-2.3,-10.42016)},line width=0.8pt]  plot[domain=4.707074492493774:5.869137181301326,variable=\t]({1.*0.420165933523451*cos(\t r)+0.*0.420165933523451*sin(\t r)},{0.*0.420165933523451*cos(\t r)+-1.*0.420165933523451*sin(\t r)});
\draw [shift={(-2.3,-10.42016)},line width=0.8pt]  plot[domain=4.707074492493774:5.869137181301326,variable=\t]({1.*0.420165933523451*cos(\t r)+0.*0.420165933523451*sin(\t r)},{0.*0.420165933523451*cos(\t r)+-1.*0.420165933523451*sin(\t r)});
\draw [shift={(-2.3,-9.57984)},line width=0.8pt]  plot[domain=4.707074492493774:5.869137181301326,variable=\t]({1.*0.420165933523451*cos(\t r)+0.*0.420165933523451*sin(\t r)},{0.*0.420165933523451*cos(\t r)+1.*0.420165933523451*sin(\t r)});
\draw [shift={(-1.533675328582084,-10.42016)},line width=0.8pt]  plot[domain=4.707074492493774:5.869137181301326,variable=\t]({-1.*0.420165933523451*cos(\t r)+0.*0.420165933523451*sin(\t r)},{0.*0.420165933523451*cos(\t r)+-1.*0.420165933523451*sin(\t r)});
\draw [shift={(-1.533675328582084,-9.57984)},line width=0.8pt]  plot[domain=4.707074492493774:5.869137181301326,variable=\t]({-1.*0.420165933523451*cos(\t r)+0.*0.420165933523451*sin(\t r)},{0.*0.420165933523451*cos(\t r)+1.*0.420165933523451*sin(\t r)});
\draw [shift={(16.198974014253743,-9.57984)},line width=0.8pt]  plot[domain=4.707074492493774:5.869137181301326,variable=\t]({1.*0.420165933523451*cos(\t r)+0.*0.420165933523451*sin(\t r)},{0.*0.420165933523451*cos(\t r)+1.*0.420165933523451*sin(\t r)});
\draw [shift={(16.96529868567166,-9.57984)},line width=0.8pt]  plot[domain=4.707074492493774:5.869137181301326,variable=\t]({-1.*0.420165933523451*cos(\t r)+0.*0.420165933523451*sin(\t r)},{0.*0.420165933523451*cos(\t r)+1.*0.420165933523451*sin(\t r)});
\draw [shift={(16.96529868567166,-10.42016)},line width=0.8pt]  plot[domain=4.707074492493774:5.869137181301326,variable=\t]({-1.*0.420165933523451*cos(\t r)+0.*0.420165933523451*sin(\t r)},{0.*0.420165933523451*cos(\t r)+-1.*0.420165933523451*sin(\t r)});
\draw [shift={(16.198974014253743,-10.42016)},line width=0.8pt]  plot[domain=4.707074492493774:5.869137181301326,variable=\t]({1.*0.420165933523451*cos(\t r)+0.*0.420165933523451*sin(\t r)},{0.*0.420165933523451*cos(\t r)+-1.*0.420165933523451*sin(\t r)});
\draw [line width=0.8pt] (7.332649342835824,-10.) circle (2.cm);
\end{tikzpicture}
\caption{The set $\Omega_\epsilon$}
\label{fig:OmegaFatCantor}
    \end{minipage}
\end{figure}

Notice that $\de F_{\delta}$ is a union of four circular arcs of radius $1$. For $i\in \N$ we set $\delta_{i} = 2^{-2i}\H^{1}(C^{\e}_{i-1})$ and let $m^{i}_{j}$ denote the midpoint of $S^{i}_{j}$, then define  
\[
F^{\e} = \bigcup_{i\in \N}\bigcup_{j=1}^{2^{i-1}} F^{i}_{j}\,,
\]
where $F^{i}_{j} = m^{i}_{j} + F_{\delta_{i}}$. 
For $x\in [-\e,\e]$ we define
\begin{equation}\label{eq:effe}
f(x) = \sum_{i=1}^{\infty}\sum_{j=1}^{2^{i-1}} f_{\delta_{i}}(x - \mu^{i}_{j})\,,
\end{equation}
where $(\mu^{i}_{j},0) = m^{i}_{j}$. We note that $F^{\e}$ is contained in the region bounded by the graphs of $f$ and $-f$. Since $f$ is $1$-Lipschitz, $F^\e$ is necessarily contained in $\overline{B_\epsilon}$. We now define
\begin{equation}\label{es1:omega}
\Omega_\e = B_1\setminus \overline{F^{\e}}\,,
\end{equation}
whose aspect can be seen in Figure \ref{fig:OmegaFatCantor}.

\begin{prop}\label{prop:misuraperimetro}
The open set $\Omega_\e$ defined in \eqref{es1:omega} satisfies $P(\Omega_\e) < \H^1(\partial \Omega_\e)$.
\end{prop}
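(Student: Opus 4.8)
The plan is to exploit the general inequality $P(\Omega_\e)\le \H^1(\de\Omega_\e)$ --- which holds because $\de^*\Omega_\e\subset\de\Omega_\e$ and $P(\Omega_\e)=\H^1(\de^*\Omega_\e)$ by Theorem \ref{thm:DeGiorgi}(iii) --- and to exhibit a subset of $\de\Omega_\e$ of positive $\H^1$-measure lying \emph{outside} the reduced boundary $\de^*\Omega_\e$. The natural candidate is the fat Cantor set $C^\e$, for which $\H^1(C^\e)=2\e\prod_{k\ge1}(1-2^{-k})>0$. Concretely, I would establish that (a) $C^\e\subset\overline{F^\e}$, so that $C^\e\cap\Omega_\e=\emptyset$, and (b) $\Omega_\e$ has density $1$ at $\H^1$-a.e.\ point of $C^\e$. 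Granting (a) and (b), each such density-$1$ point $p$ satisfies $p\in\overline{\Omega_\e}\setminus\Omega_\e=\de\Omega_\e$ while $p\notin\de^*\Omega_\e$ (a reduced-boundary point has density $1/2$), so that $C^\e$ contributes its full measure to $\de\Omega_\e$ on top of $\de^*\Omega_\e$.

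First I would record that $P(\Omega_\e)$ is finite, so that the asserted strict inequality is meaningful. Since $f$ is $1$-Lipschitz we have $\overline{F^\e}\subset\overline{B_\e}\subset\subset B_1$, hence $\chi_{\Omega_\e}=\chi_{B_1}-\chi_{F^\e}$ a.e.; the two gradient measures having disjoint supports, $P(\Omega_\e)=P(B_1)+P(F^\e)=2\pi+P(F^\e)$. The lenses $F^i_j$ sit over pairwise disjoint closed segments, hence have pairwise disjoint closures, so $P(F^\e)=\sum_{i,j}P(F^i_j)$. A direct computation gives $P(F_\delta)=4\arcsin\delta\le C\delta$; using $\delta_i\le 2\e\,2^{-2i}$ together with the presence of $2^{i-1}$ lenses at level $i$, the series $\sum_i 2^{i-1}P(F_{\delta_i})\le C\e\sum_i 2^{-i}$ converges, so $P(\Omega_\e)<\infty$.

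The hard part is the density estimate (b). Here I would use that $F^\e$ lies in the region $\{(x,y):|y|\le f(x)\}$, and that $f$ is $1$-Lipschitz and vanishes on $C^\e$, whence $f(x)\le\dist(x,C^\e)$ for every $x$. Fixing $p=(x_0,0)$ with $x_0\in C^\e$, this yields $|F^\e\cap B_r(p)|\le 2\int_{x_0-r}^{x_0+r}\dist(x,C^\e)\,dx$. Writing the window $(x_0-r,x_0+r)$ as the union of $C^\e$ with countably many intervals $I_k$ (the window-portions of the complementary intervals of $C^\e$), one has $\dist(x,C^\e)\le|I_k|$ on each $I_k$, so the integral is at most $\sum_k|I_k|^2\le(\sup_k|I_k|)\sum_k|I_k|$. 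At every one-dimensional Lebesgue density point $x_0$ of $C^\e$ one has $\sum_k|I_k|=|(x_0-r,x_0+r)\setminus C^\e|=o(r)$, whence $\sup_k|I_k|=o(r)$ as well and $\sum_k|I_k|^2=o(r^2)$. Thus $|F^\e\cap B_r(p)|=o(r^2)$, i.e.\ $\theta(F^\e)(p)=0$ and $\theta(\Omega_\e)(p)=1$, for $\H^1$-a.e.\ $p\in C^\e$.

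Finally, claim (a) is elementary: the centre $m^i_j$ of each lens belongs to $F^i_j\subset F^\e$, and for $x_0\in C^\e$ the midpoints of the segments removed around $x_0$ converge to $x_0$, so $p=(x_0,0)\in\overline{F^\e}$ and $C^\e\cap\Omega_\e=\emptyset$. Combining (a) and (b), the set $C^\e\setminus\de^*\Omega_\e$ has $\H^1$-measure $\H^1(C^\e)>0$, is disjoint from $\de^*\Omega_\e$, and lies in $\de\Omega_\e$; therefore
\[
\H^1(\de\Omega_\e)\ \ge\ \H^1(\de^*\Omega_\e)+\H^1\bigl(C^\e\setminus\de^*\Omega_\e\bigr)\ =\ P(\Omega_\e)+\H^1(C^\e)\ >\ P(\Omega_\e).
\]
The only genuinely delicate step is the $o(r^2)$ area bound of the previous paragraph: it encodes the fact that the lenses, being vertically as flat as $\delta^2$ while horizontally as wide as $\delta$, pile up slowly enough near $C^\e$ to leave $\H^1$-a.e.\ Cantor point a point of density $1$ for $\Omega_\e$.
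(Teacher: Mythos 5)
Your proof is correct, and it rests on the same mechanism as the paper's: the deficit between $\H^1(\de\Omega_\e)$ and $P(\Omega_\e)=\H^1(\de^*\Omega_\e)$ is exactly the fat Cantor set, which lies in the topological boundary but is $\H^1$-a.e.\ disjoint from the reduced boundary. The difference is in how that disjointness is justified. The paper works with $F^\e$ rather than $\Omega_\e$: it writes $\de F^\e = C^\e\cup (F^\e)^{(1/2)}\cup\hat F^\e$ (the corner set $\hat F^\e$ being countable), declares this decomposition to be clear, and then obtains $\H^1(\de F^\e)=\H^1(C^\e)+P(F^\e)$ from Theorems \ref{thm:DeGiorgi} and \ref{thm:Fed}. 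You instead prove the disjointness quantitatively: the bound $f(x)\le\dist(x,C^\e)$ together with the one-dimensional Lebesgue density theorem yields $|F^\e\cap B_r(p)|=o(r^2)$, hence density $1$ for $\Omega_\e$, at $\H^1$-a.e.\ $p\in C^\e$. This supplies a complete proof of the step the paper labels ``Clearly'', and it actually proves more: it verifies directly that $\H^1(\Omega_\e^{(1)}\cap\de\Omega_\e)>0$, the property emphasized in the introduction (and relevant in view of Proposition \ref{prop:selfCeD1}), which the paper's half-density decomposition does not immediately exhibit. Two minor points. First, since $\Omega_\e=B_1\setminus\overline{F^\e}$, the density estimate should be stated for $\overline{F^\e}$ rather than $F^\e$; this costs nothing, because the closed region $\{(x,y):|y|\le f(x)\}$ contains $\overline{F^\e}$, so the same $o(r^2)$ bound applies verbatim. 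Second, for the finiteness of $P(\Omega_\e)$ plain subadditivity $P(F^\e)\le\sum_{i,j}P(F^i_j)$ suffices, so the asserted equality (true, but requiring the pairwise disjointness of the essential boundaries of the lenses) need not be justified.
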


\begin{proof}
In general we have $P(F^{\e}) \le \H^{1}(\de F^{\e})$, therefore $P(F^{\e})$ is finite because $\H^{1}(\de F^{\e})$ is finite by construction. According to Theorem \ref{thm:DeGiorgi} we only need to show that $P(F^{\e}) = \H^{1}(\de^{*}F^{\e}) < \H^1(\partial F^{\e})$. Clearly $\partial F^{\e} = C^{\e} \cup {(F^{\e})}^{\left(1/2\right)} \cup \hat{F^{\e}}$, where $\hat F^{\e}$ is the set of corner points of $\de F^{\e}$ that do not belong to the segment $C^{\e}_{0}$. Since $\hat{F^{\e}}$ is at most countable, it has null $\H^1$-measure and therefore
\[
\H^1(\partial F^{\e}) = \H^1(C^{\e}) + \H^1\left({(F^{\e})}^{(1/2)}\right) = \H^1(C^{\e}) + \H^{1}(\de^{*}F^{\e})\,,
\]
also owing to Theorem \ref{thm:Fed}. 
The claim follows at once by recalling that $\H^{1}(C^{\e})>0$.
\end{proof}
%
%This measure is easy to compute, due to our construction. Since $C$ is what is left by each subsequent removal of intervals and since at each step we remove $1/2^{i}$ of the remaining part, one has as the remaining mass at step $i$ the quantity
%\[
%\H^1(C) =  2\epsilon \prod_{k=1}^\infty \left(1-\frac{1}{2^{k}} \right)>0\,.
%\]
%\lim_{i\to \infty} m_i$, where $m_i$ denotes the remaining mass at step $i$ and is recursively defined as
%
%which is given by\footnote{The rightmost term is known as the Pochhammer symbol in number theory.}
%\begin{equation}\label{eq:measureofC}
%\H^1 (C) = \prod_{k=1}^\infty \left(1-\frac{1}{2^k} \right) = \left( \frac{1}{2}; \frac{1}{2}\right)_\infty.
%\end{equation}
%\[
%m_i := m_{i-1} \left(1 - \frac{1}{2^i}\right) = \prod_{k=1}^i \left(1 - \frac{1}{2^k} \right), \quad \text{having set $m_0 = 2\epsilon$.}
%\]

%This product is strictly greater than zero as it can be seen by noting that its logarithm
%\[
%\log \left( \prod_{k=1}^\infty \left(1-\frac{1}{2^{k}} \right) \right) = \sum_{k=1}^\infty \log \left(1-\frac{1}{2^{k}} \right)
%\]
%converges to a finite negative number, since it is a sum of all negative terms that goes as $(-1/2^i)$. Therefore the product in ~(\ref{eq:measureofC}) must be greater than zero (namely it amounts roughly to $0,288\dots$).

Now we show that $\Omega_\e$ is a minimal Cheeger set as soon as $\e$ is small enough. The proof of this fact will be obtained through some intermediate steps. First of all, by the boundedness of  $\Omega_\e$ and by Theorem \ref{thm:existenceofCS} we know that $\Omega_\e$ admits at least a Cheeger set, from now on generically denoted as $E$. Then we have the following, intermediate result.

\begin{prop}\label{prop:proprieta}
Let $\e<1/24$ and let $\Omega_\e$ be as in \eqref{es1:omega}. Then
\begin{itemize}
\item[(i)] $h(\Omega_\e) \in \left(2, \frac{2}{1-\epsilon}\right]$;
\item[(ii)] if $E$ is a Cheeger set of $\Omega_\e$ then any connected component of $\de E \cap \Omega_\e$ is a circular arc with curvature equal to $h(\Omega_\e)$ and length less or equal than $\pi h(\Omega_\e)^{-1}$;
\item[(iii)] any Cheeger set of $\Omega_\e$ is  P-indecomposable;
\item[(iv)] the minimal Cheeger set $E_{0}$ of $\Omega_\e$ is unique, connected, and $2$-symmetric;
%\item[(v)] if $\e<1/24$ then the minimal Cheeger set $E_{0}$ has only one connected component.
\end{itemize}
\end{prop}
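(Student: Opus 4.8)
The plan is to treat the four assertions in order. Assertion (ii) is the quickest: $\Omega_\e$ is open, bounded and connected (it contains the connected annulus $B_1\setminus\overline{B_\e}$, to which every piece of $\Omega_\e\cap B_\e$ attaches, the bumps being thin and hugging the axis), so Proposition~\ref{prop:proprietaCS}(i)--(ii) applies verbatim and yields that each connected component of $\de E\cap\Omega_\e$ is a circular arc of curvature $h(\Omega_\e)$ and length at most $\pi h(\Omega_\e)^{-1}$. The real substance is in (i), from which (iii) and then (iv) follow by a chain of comparison arguments.

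For (i), the lower bound $h(\Omega_\e)\ge 2=h(B_1)$ is immediate from the monotonicity of the Cheeger constant (Proposition~\ref{prop:monotonicity}), since $\Omega_\e\subset B_1$. To make it strict I would argue by contradiction: if $h(\Omega_\e)=h(B_1)$, then any Cheeger set $E$ of $\Omega_\e$ would also minimize $P(\cdot)/|\cdot|$ among subsets of $B_1$, hence be a Cheeger set of $B_1$; as the disk is the unique Cheeger set of itself, $E=B_1$, contradicting $E\subset\Omega_\e$ with $|B_1\setminus\Omega_\e|=|F^\e|>0$. For the upper bound I would simply test the ratio on $\Omega_\e$ itself. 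Since $\overline{F^\e}\subset\subset B_1$ one has $P(\Omega_\e)=2\pi+P(F^\e)$ and $|\Omega_\e|=\pi-|F^\e|$, so the desired inequality $h(\Omega_\e)\le 2/(1-\e)$ is equivalent to $(1-\e)P(F^\e)+2|F^\e|\le 2\pi\e$. Now $P(F^\e)=\sum_i 2^{i-1}\H^1(\de F_{\delta_i})=\sum_i 2^{i-1}\cdot 4\arcsin\delta_i$ (four unit-circle arcs per bump), and using $\delta_i=2^{1-2i}\e\prod_{k<i}(1-2^{-k})\le 2^{1-2i}\e$ the geometric series gives $P(F^\e)\lesssim 4\e$, while $|F^\e|=O(\e^3)$. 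The hypothesis $\e<1/24$ (equivalently $\delta_1=\e/2<1/48$) is exactly what makes $\arcsin\delta_i$ essentially linear and these explicit bounds close under $2\pi\e$.

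For (iii) I would suppose $E=S\cup T$ is a P-decomposition of a Cheeger set $E$, with $|S|,|T|>0$ and $P(E)=P(S)+P(T)$. Comparing with $P(S)\ge h|S|$ and $P(T)\ge h|T|$ (both $S,T$ being admissible competitors in $\Omega_\e$) forces equality in each, so $S$ and $T$ are themselves Cheeger sets; by Proposition~\ref{prop:proprietaCS}(iv) together with $h=h(\Omega_\e)\le 2/(1-\e)$ from (i), each has volume at least $\pi(2/h)^2\ge\pi(1-\e)^2$. Hence $|E|=|S|+|T|\ge 2\pi(1-\e)^2$, which for $\e<1/24$ exceeds $\pi>|\Omega_\e|\ge|E|$, a contradiction; therefore every Cheeger set is P-indecomposable.

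Finally (iv): existence of a minimal Cheeger set follows from the remark after Proposition~\ref{prop:proprietaCS} (intersections of Cheeger sets are Cheeger, combined with the volume lower bound). For uniqueness, take two minimal Cheeger sets $E_1,E_2$: if $|E_1\cap E_2|>0$ their intersection is a Cheeger set and minimality gives $E_1=E_1\cap E_2=E_2$; if instead $|E_1\cap E_2|=0$, then $P(E_1\cup E_2)/(|E_1|+|E_2|)$ is squeezed to $h$, forcing $P(E_1\cup E_2)=P(E_1)+P(E_2)$, so $E_1\cup E_2$ would be a P-decomposable Cheeger set, contradicting (iii). Connectedness of $E_0$ is precisely the P-indecomposability established in (iii). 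For $2$-symmetry, note that $\Omega_\e$ is invariant under the reflections $\sigma_1:(x,y)\mapsto(x,-y)$ and $\sigma_2:(x,y)\mapsto(-x,y)$; these preserve both perimeter and volume, so $\sigma_1(E_0)$ and $\sigma_2(E_0)$ are again minimal Cheeger sets and, by uniqueness, coincide with $E_0$. I expect the main obstacle to be the quantitative upper bound in (i): one must estimate $\sum_i 2^{i-1}\arcsin\delta_i$ and $|F^\e|$ with constants sharp enough to stay below $2\pi\e$, which is exactly where $\e<1/24$ is used; once (i) is in hand, steps (ii)--(iv) are structural and follow cleanly.
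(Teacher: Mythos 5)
Your items (i)--(iii) and the uniqueness/$2$-symmetry parts of (iv) are essentially sound, and follow the same comparison arguments as the paper. For the upper bound in (i) you take a more computational route than necessary: the paper applies monotonicity (Proposition \ref{prop:monotonicity}) to the inclusion $B_1\setminus\overline{B_\e}\subset\Omega_\e$, and the annulus tested against itself gives $h(B_1\setminus\overline{B_\e})\le 2\pi(1+\e)/\bigl(\pi(1-\e^2)\bigr)=2/(1-\e)$ with no series estimates; your direct estimate of $P(F^\e)$ and $|F^\e|$ can be made to work, but it is not where the hypothesis $\e<1/24$ is used --- in fact (i) and (iii) only need $\e<1/4$.

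The genuine gap is the sentence ``Connectedness of $E_0$ is precisely the P-indecomposability established in (iii).'' P-indecomposability is a measure-theoretic notion and does not imply topological connectedness, and in this example the distinction is exactly the danger. The whole segment $C^\e_0=[-\e,\e]\times\{0\}$ is contained in $\overline{F^\e}$, hence in $\de\Omega_\e$: the domain $\Omega_\e$ has points on both sides of this segment but contains none of it. A candidate Cheeger set could therefore consist of an upper component $E_1$ and a lower component $E_2$ (its reflection) whose reduced boundaries share a subset of the fat Cantor set $C^\e$ of positive $\H^1$-measure; for such a set $P(E_1\cup E_2)=P(E_1)+P(E_2)-2\H^1(\de^*E_1\cap\de^*E_2)<P(E_1)+P(E_2)$, so it is \emph{not} P-decomposable --- your item (iii) does not exclude it --- and yet it is topologically disconnected. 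This is precisely the configuration the paper rules out by a separate argument: from (iii) one gets $P(E_0)<2P(E_1)$, which forces $\H^1(\de^*E_1\cap C^\e_0)>0$ and hence the correction estimate $2P(E_1)\le P(E_0)+2\H^1(C^\e_0)=P(E_0)+4\e$ (this is \eqref{eq:splitper}); combining this with $P(E_0)=h(\Omega_\e)|E_0|$, the isoperimetric inequality, and the bounds from (i) yields $|E_0|=2|E_1|\ge\sqrt{2}\pi(1-\e)^2-2\e(1-\e)>\pi$ when $\e<1/24$, a contradiction. This is the one place where the threshold $1/24$ (rather than $1/4$) is actually needed, so your proposal both misattributes that hypothesis to (i) and omits the argument it exists for.
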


\begin{proof}
By the inclusions $B_1\setminus \overline{B_\epsilon} \subset \Omega_\e \subset B_1$, (i) follows from Proposition \ref{prop:monotonicity}. On the other hand (ii) follows from Proposition \ref{prop:proprietaCS} (i)-(ii). The proof of (iii) is a bit more involved. By Proposition \ref{prop:proprietaCS} (iv) we have the following lower bound for the volume of any Cheeger set $E$:
\begin{equation}\label{eq:bound on volume}
|E| \geq \pi \left( \frac{2}{h(\Omega_\e)}\right)^2 \ge \pi (1-\epsilon)^2 = \pi(1-\epsilon)^2.
\end{equation}
We now argue by contradiction supposing that $E$ is P-decomposable, so that there exist $S$ and $T$, both with positive measure, and such that $E = S\cup T$ and $P(E) = P(S) + P(T)$. Then $S$ and $T$ are both Cheeger sets of $\Omega_\e$ (see for instance \cite{Parini2011}), hence they must satisfy \eqref{eq:bound on volume}. Since $\epsilon<1/4$ we obtain $|E| = |S|+|T| > 18\pi/16 > \pi = |B_1|$, which is clearly not possible. 
In order to prove (iv) we notice that, thanks to the symmetry of $\Omega_\e$, the reflection $\widetilde E_{0}$ of $E_{0}$ with respect to one of the two coordinate axes is a Cheeger set of $\Omega_\e$, too. By the lower bound on the volume one has $|E_{0}\cap \widetilde E_{0}|>0$, then by well-known properties of Cheeger sets, such intersection is also a Cheeger set of $\Omega_\e$. Therefore by minimality of $E_{0}$ we infer $E_{0} = E_{0}\cap \widetilde E_{0} = \widetilde E_{0}$, which shows the claimed symmetry of $E_{0}$. Notice moreover that, by the same argument, $E_{0}$ is unique.
In order to show the topological connectedness of $E_0$, we can suppose by contradiction, and without loss of generality, that there are just two connected components $E_1, E_2$ of $E_{0}$, and that $E_2$ is obtained by reflecting $E_1$ with respect to one of the axes of symmetry of $\Omega_\e$. By (iii) we must have $P(E_{0}) < P(E_1) +P(E_2) = 2P(E_1)$. Moreover the strict inequality implies that $\H^{1}(\de^{*} E_{1}\cap C^{\e}_{0})>0$, so that we obtain
\begin{equation}\label{eq:splitper}
2P(E_{1}) \le P(E_{0}) + 2\H^{1}(C^{\e}_{0}) = P(E_{0}) + 4\e\,.
\end{equation}
Hence by \eqref{eq:splitper} and the isoperimetric inequality we infer
\begin{align}
\frac{4}{1-\epsilon}|E_1| &\geq 2h(\Omega_\e)|E_1| = h(\Omega_\e)|E_{0}| = P(E_{0})\nonumber \\
&\geq 2P(E_1) -4\epsilon \geq 4\sqrt{\pi}|E_1|^{\frac{1}{2}} - 4\epsilon = 4\sqrt{\frac{\pi}{2}} |E_{0}|^{1/2} - 4\epsilon \nonumber \\
&\geq \frac{4\pi}{\sqrt{2}}(1-\epsilon) - 4\epsilon. \nonumber
\end{align}
Then if $\epsilon<1/24$ we find
\[
|E_{0}| = 2|E_1| \geq \sqrt{2}\pi(1-\epsilon)^2 - 2\epsilon(1-\epsilon) > \pi,
\]
that is, a contradiction.
\end{proof}

\begin{thm}\label{thm:cantor}
Let $\e<1/24$. Then, $\Omega_\e$ defined in \eqref{es1:omega} is a minimal Cheeger set.
\end{thm}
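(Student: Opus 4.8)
The plan is to show that the minimal Cheeger set $E_0$ of $\Omega_\e$ coincides with $\Omega_\e$ itself, which immediately gives \eqref{eq:MC}: by Proposition \ref{prop:proprieta}(iv) the set $E_0$ is unique, and by the intersection property of Cheeger sets (together with the volume bound, which forces any two Cheeger sets to overlap) every Cheeger set contains $E_0$; hence if $E_0=\Omega_\e$ then $\Omega_\e$ is the only minimizer in \eqref{eq:Cheegerconstant}. To prove $E_0=\Omega_\e$ it suffices to show $\de E_0\cap\Omega_\e=\emptyset$: indeed this forces $P(E_0;\Omega_\e)=\H^1(\de^* E_0\cap\Omega_\e)=0$, so $E_0$ is locally constant in the connected open set $\Omega_\e$, and since $|E_0|>0$ we conclude $E_0=\Omega_\e$.

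I would then argue by contradiction, assuming the free boundary is nonempty. By Proposition \ref{prop:proprieta}(i)--(ii), any component of $\de E_0\cap\Omega_\e$ is a circular arc $\gamma$ of radius $r=h(\Omega_\e)^{-1}\in\big[\tfrac{1-\e}{2},\tfrac12\big)$, of length at most $\pi r<\tfrac{\pi}{2}$ (so its chord is shorter than $2r<1$), with $E_0$ on the concave side; moreover $E_0$ is connected and symmetric with respect to both axes. By Proposition \ref{prop:proprietaCS}(iii), at each endpoint of $\gamma$ lying on a regular point of $\de\Omega_\e$ the arc meets $\de\Omega_\e$ tangentially. The regular part of $\de\Omega_\e$ consists of the outer circle $\de B_1$ and the unit-radius arcs bounding the bumps $F^i_j$; the latter, together with the Cantor set, are confined to a neighbourhood of the slit $[-\e,\e]\times\{0\}$ of radius $O(\e)$ and height $O(\e^2)$.

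The geometric heart of the proof is to locate $\gamma$. The first observation is that a circle of radius $r<1$ can be internally tangent to $\de B_1$ at only one point, so $\gamma$ cannot have both endpoints on the smooth arc $\de B_1$; hence every free-boundary arc must reach into the slit region, terminating on a bump arc or at one of its cusps/corners. A chord computation confirms this is only borderline feasible: tangency at $A\in\de B_1$ places the centre of $\gamma$ at distance $1-r$ from the origin, so its far endpoint lies at distance at least $1-2r$ from the origin, which for it to fall in the slit region forces $1-2r\le O(\e)$, i.e. $\gamma$ is nearly a semicircle bridging $\de B_1$ and the slit. Using the two-fold symmetry and connectedness of $E_0$, this should reduce the analysis to finitely many admissible configurations.

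To close each configuration I would compare $E_0$ with the competitor $E_0\cup U$, where $U\subset\Omega_\e\setminus E_0$ is the region cut off by $\gamma$ and $\sigma=\de U\cap\de\Omega_\e$. A direct computation shows that the Cheeger ratio of the competitor is strictly smaller than $h(\Omega_\e)$ precisely when $\H^1(\sigma)<\H^1(\gamma)+h(\Omega_\e)\,|U|$. Since $\gamma$ has curvature $h(\Omega_\e)>2$ while $\sigma$ lies on $\de\Omega_\e$, whose free part has curvature at most $1$, one expects this inequality to hold, yielding a competitor with ratio below $h(\Omega_\e)$ and contradicting the definition of the Cheeger constant. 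I expect the main obstacle to be exactly this final comparison in the critical, nearly-semicircular case where $\gamma$ tangentially joins the smooth disk to the slit: there the chord $\approx 1-\e$ is barely below the maximal length $2r$, so the competing lengths and areas are all comparable and the margin is thin. This is where the quantitative threshold $\e<1/24$ must be used to guarantee that the comparison inequality holds uniformly across all admissible configurations.
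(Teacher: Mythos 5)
Your overall strategy (reduce to showing $\de E_0\cap\Omega_\e=\emptyset$, exclude arcs with both endpoints on $\de B_1$ by tangency and $r<1/2$) coincides with the paper's, but the step you call the closing argument contains a genuine gap: the competitor inequality $\H^1(\sigma)<\H^1(\gamma)+h(\Omega_\e)\,|U|$ does \emph{not} follow from the curvature comparison you invoke; it is precisely the assertion to be proved, restated. To see that the heuristic ``$\gamma$ has curvature $>2$, the smooth part of $\de\Omega$ has curvature $\le 1$, hence filling $U$ lowers the ratio'' is insufficient, test it on the unit square $Q$: its sides have curvature $0\le 1$, every free-boundary arc of its Cheeger set has curvature $h(Q)=2+\sqrt{\pi}>2$, and yet for the corner region $U$ cut off by a quarter-circle $\gamma$ of radius $r=1/h(Q)$ one computes $\H^1(\sigma)=2r$ while $\H^1(\gamma)+h(Q)|U|=\tfrac{\pi r}{2}+r\left(1-\tfrac{\pi}{4}\right)=r\left(1+\tfrac{\pi}{4}\right)<2r$; the inequality fails, consistently with the fact that the square is \emph{not} its own Cheeger set. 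So whether your inequality holds depends on fine quantitative geometry of the cut-off region (lengths of $\sigma$ versus $\gamma$, not just pointwise curvatures), and this is exactly what you leave open (``one expects\dots'', ``I expect the main obstacle\dots''). In effect the proposal assumes its conclusion at the critical step; note that when the paper does run a competitor argument of this type (for the second example, Theorem \ref{selfCheegernessOmega}), it needs the quantitative angle estimates of Lemmas \ref{lem:lemma25}--\ref{lem:2.6} to control exactly these lengths.

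Separately, your case analysis is incomplete: you only track arcs having one endpoint on $\de B_1$, whereas an arc may have both endpoints in the slit region, on the bump arcs $\de^*F^\e$, on the fat Cantor set $C^\e$, or at the cusps $\hat F^\e$, and each of these requires its own argument. The paper's proof of Theorem \ref{thm:cantor} needs no competitor at all and disposes of every case by short qualitative arguments: both endpoints on $\de B_1$ forces $p=q$ (your observation); one endpoint on $\de B_1$ and one on $\de F^\e$ is killed by symmetry plus connectedness, since such an arc is symmetric with respect to neither axis, so the reflections produce at least four arcs which disconnect $E_0$, contradicting Proposition \ref{prop:proprieta}\,(iv); an endpoint on $\de^*F^\e$ traps the arc inside the closed unit disk tangent to $F^\e$ at that point, again forcing $p=q$; both endpoints on $C^\e$ confine the arc (the minor arc of a chord of length $\le 2\e$, since $r>2\e$) inside $B_\e$, producing a connected component of $E_0$ of area at most $\pi\e^2$, against the volume bound \eqref{eq:bound on volume}; and an endpoint at a cusp is excluded by the maximum principle, comparing the graph of $f$ (curvature $\le 1$) with the arc (curvature $\ge 2$). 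Your nearly-semicircular ``borderline'' configuration simply does not survive the symmetry--connectedness argument, so the delicate comparison you anticipate never has to be made.
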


\begin{proof}
Let $E_{0}$ be a minimal Cheeger set of $\Omega_\e$. By Proposition \ref{prop:proprieta} (iv) we know that $E_{0}$ is $2$-symmetric and unique. Assume now by contradiction that $E_{0}$ does not coincide with $\Omega_\e$. This implies that $\de E_{0}\cap \Omega_\e \neq \emptyset$, thus there exists at least one connected component of $\de E_{0}\cap \Omega_\e$ consisting of a circular arc $\alpha$ of radius $r=h(\Omega_\e)^{-1}$, whose endpoints $p,q$ necessarily belong to $\de \Omega_\e$. We now rule out all possibilities depending on where the endpoints $p$ and $q$ are located. This will be accomplished by the discussion of the following four cases (hereafter we adopt the same notation introduced in the proof of Proposition \ref{prop:misuraperimetro}, i.e., we denote by $\hat F^{\e}$ the set of corner points of $\de F^{\e}$ that do not belong to $C^{\e}_{0}$).

\textit{Case 1: one of the endpoints of $\alpha$ belongs to $\de B_{1}$.} Let us assume without loss of generality that $p\in \de B_{1}$. In this case we have to distinguish two subcases. First, if $q\in \de B_{1}$ then $\alpha$ must touch $\de B_{1}$ in a tangential way at both $p$ and $q$, however the radius $r$ is smaller than $1/2$, so that necessarily $p=q$, that is, $\alpha$ is a full circle, which is in contrast with Proposition \ref{prop:proprieta} (ii). Second, if $q\in \de F^{\e}$, the arc $\alpha$ can be symmetric neither with respect to the $x$-axis nor with respect to the $y$-axis. Therefore, by symmetry, $\de E_{0}\cap \Omega_\e$ has at least three more other connected components. These components cannot touch, but in the endpoints. Then, there exist at least two connected components of $E_{0}$, which yields a contradiction with Proposition \ref{prop:proprieta} (iv).

\textit{Case 2: one of the endpoints of $\alpha$ belongs to $\de^{*} F^{\e}$.} We can assume that $p\in \de^{*} F^{\e}$ and $q\in \de F^{\e}$. In this case the arc $\alpha$ is contained in the closure of the ball of radius $1$ that is tangent to $\de^{*}F^{\e}$ at $p$ and does not intersect $F^{\e}$ (by construction of $F^{\e}$ there is exactly one such ball for any $p\in \de^{*}F^{\e}$). Consequently the only possibility is that $p=q$, which is not possible as discussed in Case 1.

\textit{Case 3: $p$ and $q$ belong to the fat Cantor set $C^{\e}$.} By the assumption on $\e$ coupled with Proposition \ref{prop:proprieta} (i) we infer that $r=h(\Omega_\e)^{-1} > 2\e$. Then we observe that $\alpha$ is the smaller arc cut by the chord $\overline{pq}$ on one of the two possible circles of radius $r$ passing through both $p$ and $q$. We finally have that $\alpha \subset B_{\e}$ and thus $E_{0}$ has a connected component $E_{0}'$ contained in $B_\e$, but this is not possible as by  \eqref{eq:bound on volume} and the choice of $\e$ we have 
\[
\pi\e^{2} \ge |E_{0}'| \ge \pi\left(\frac{2}{h(\Omega_\e)}\right)^{2} \ge \pi(1-\e)^{2}\,.
\]

\textit{Case 4: one endpoint belongs to $\hat F^{\e}$, the other to $\hat F^{\e}\cup C^{\e}$.} As before we can assume without loss of generality that $p$ is a corner point on the graph of $f$, where $f$ is defined in \eqref{eq:effe}, and that $q\in \hat F^{\e}\cup C^{\e}$. Notice that $q$ must belong to the upper half-plane, otherwise $\alpha$ would cross the segment $C^{\e}_{0}$. This means that $q$ belongs to the graph of $f$ over $[-\e,\e]$. Moreover, the curvature vector associated with $\alpha$ at $p$ must have a positive component with respect to the $y$-axis, otherwise we would fall into the same situation of Case 3 (i.e., the presence of a too small connected component of $E_{0}$). Consequently, by comparing the graph of $f$ (whose generalized curvature is bounded from above by $1$) with the arc $\alpha$ (whose curvature is $h(\Omega_\e) \ge 2$) we deduce by the maximum principle that their intersection can only contain $p$, which contradicts the fact that $q$ belongs to that intersection. This concludes the discussion of Case 4, and thus the proof of the theorem.
\end{proof}

It is natural to ask whether solutions $u_\e^\pm$ of \eqref{eq:PMC} with $\Omega = \Omega_\e$ and $H(x) = H_\e^\pm$ exist, for the two prescribed mean curvatures defined as
\[
H_\e^- = P(\Omega_\e) / |\Omega_\e|\qquad\text{and}\qquad 
H_\e^+ = \H^1(\de \Omega_\e) / |\Omega_\e| = \big(P(\Omega_\e) + \H^1(C^\e)\big) / |\Omega_\e|\,.
\]
One can thus consider two approximating sequences of sets, $\{\Omega_{\e,j}^-\}_j$ and $\{\Omega_{\e,j}^+\}_j$, defined in the following way. The first sequence, $\{\Omega_{\e,j}^-\}_j$, is monotone decreasing towards $\Omega_\e$ and is obtained by subsequently removing each rescaled and translated copy of $F_\delta$ from the ball $B_1$. The second sequence, $\{\Omega_{\e,j}^+\}_j$, is monotone increasing and constructed by removing smaller and smaller tubular neighborhoods of $\overline{F^\e}$ from $B_1$. Clearly, both sequences converge to $\Omega_\e$ in the $L^1$ sense, however only the first one converges also in the perimeter sense, as $j\to\infty$. It can be shown that $\Omega_{\e,j}^\pm$ is a minimal Cheeger set, for all $j$ large enough. Now, the idea is to define 
\[
H^\pm_{\e,j} = P(\Omega_{\e,j}^\pm) / |\Omega_{\e,j}^\pm| 
\]
and to solve \eqref{eq:PMC} on $\Omega_{\e,j}^\pm$ with $H = H^\pm_{\e,j}$, thus obtaining two sequences of solutions $u_{\e,j}^\pm$ that, up to suitable vertical translations, and relying on the theory of generalized solutions as described in \cite{Miranda1977} (see also \cite{Giusti1978}), will converge to some limit functions $u_\e^\pm$. Then, $u_\e^\pm$ will be solutions of \eqref{eq:PMC} on $\Omega_\e$ for $H = H_\e^\pm$, respectively. Notice that both $u_\e^-$ and $u_\e^+$ become vertical at the reduced boundary of $\Omega_\e$. This shows that $\Omega_\e$ provides a counterexample to the possibility of extending the characterization of existence and uniqueness up to vertical translations, that has been proved in \cite[Theorem 4.1]{LS16a} under the assumption of weak regularity of the domain.

\section{A minimal Cheeger set with fast-decaying porosity near its boundary}\label{sec:emmenthal}

In this section we provide an example of set $\Omega_{\bold 0}\subset \R^{2}$ that is a minimal Cheeger set, i.e. it satisfies \eqref{eq:MC}, and whose perimeter $P(\Omega_{\bold 0})$ equals $\H^{1}(\de\Omega_{\bold 0})$. Its peculiarity is that there is a subset $A$ of the reduced boundary $\de^* \Omega_{\bold 0}$ with $\H^1(A) >0$, such that $\de \Omega_{\bold 0}$ is not locally a graph at any point $x\in A$.

We define the set $J$ of pairs $\mathbf{j} = (j_{1},j_{2})$ such that $j_{1},j_{2}\in \N$ and $j_{2}\le j_{1}$, then for any $\bj\in J$ we set
\begin{equation*}
\mathbf{j}+1=
\begin{cases}
(j_{1}+1,1) \qquad &\text{if $j_{2}=j_{1}$,}\\
(j_{1},j_{2}+1) &\text{if $j_{2}<j_{1}$.}
\end{cases}
\end{equation*}
We fix two sequences $(\e_{\mathbf{j}})_{\mathbf{j}\in J}$ and $(r_{\mathbf{j}})_{\mathbf{j}\in J}$ of positive real numbers between $0$ and $\frac 12$, that will be specified later, and define
\begin{alignat*}{5}
&\rho_{\bj} && = 1 - \epsilon_{\bj}, \hspace{5cm} &&\theta_{\bj} &&= j_{2}\cdot \frac{\pi}{2(j_{1}+1)},\\ 
&x_{\bj} &&= \rho_{\bj} \left(\cos( \theta_{\bj}), \sin( \theta_{\bj})\right), &&B_{\bj} &&= B_{r_{\bj}}(x_{\bj})\,,
\end{alignat*}
so that in particular $x_{\bj}$ is a point of $B_1 = B_{1}(0)$ contained in the first quadrant, for all $\bj \in J$. We write $\bj \preceq \mathbf{j'}$ (or equivalently $\mathbf{j'}\succeq \bj$) if $\bj$ precedes or is equal to $\mathbf{j'}$ with respect to the standard lexicographic order on $J$. The notion of ``limit as $\bj\to \infty$'' is the obvious one associated with this order relation. We require the following properties on the sequences introduced above:
\begin{itemize}
\item[(i)] $\sum_\bj r_\bj \le 1/{(2^8 +1)}$;
\item[(ii)] $\e_{\mathbf{1}} < 1/4$;
\item[(iii)] $\e_{\bj+1}  \le \frac{3}{10}\e_\bj$;
\item[(iv)] $r_\bj \le 2^{-18}\e_\bj^{3}$.
\end{itemize}
Notice that (iii) and (iv) imply that $\e_\bj - 2\e_{\bj +1} \ge r_{\bj} +2r_{\bj+1}$. This in turn implies that the closures of the balls $B_{r_{\bj}}(x_{\bj})$ are pairwise disjoint.
We then set 
\begin{equation}\label{defOmegaPoroso}
\Omega_{\bold 0} := B_1 \setminus  \bigcup \limits_{\bj\succeq \bold 0} \overline{B_{\bj}},
\end{equation}
which is an open set since the only accumulation points of the sequence of ``holes'' $B_{\bj}$ are contained in $\de B_1$. Sequential zoom-ups of how this set is, can be seen in Figure \ref{fig:poroso}.
\begin{figure}[t]
\centering
\includegraphics[trim={1.5cm 5cm 1.5cm 4.5cm},clip, width=\textwidth]{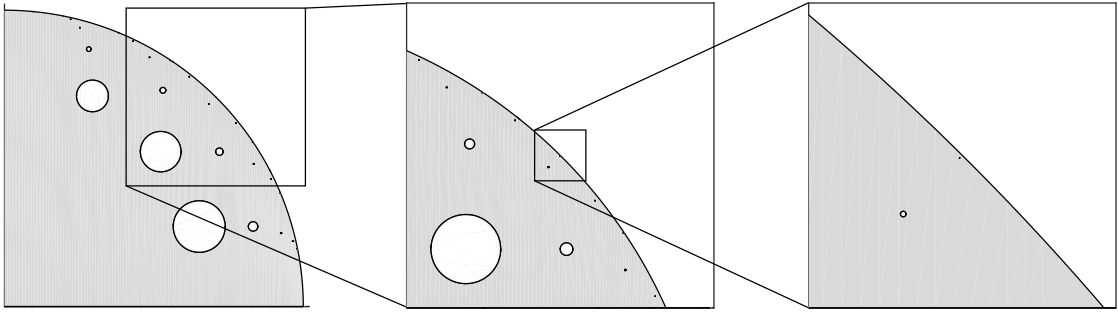} 
\caption{Close-ups of the set $\Omega_{\bold 0}$ of Section \ref{sec:emmenthal}.
}
\label{fig:poroso}
\end{figure}
Once proved that this set is a minimal Cheeger set, it is quite easy to build from it a sequence of minimal Cheeger sets converging to the unitary ball both in volume and in perimeter by ``filling'' the holes one at a time. Let indeed $\Omega_{\bold k} $ be the set defined by
\[
\Omega_{\bold k} := B_1 \setminus \bigcup \limits_{\bj \succeq \bold k} \overline{B_{\bj}}\,.
\]
Clearly $\Omega_{\bold k} \subset \Omega_{\bold h}$ whenever $\bold k \preceq \bold h$, and as $\bold k \to \infty$ the sequence $\Omega_{\bold k} \to B_1$ both in perimeter and area. It is clear that their Cheeger constants converge to that of the unit disk $B_1$. Therefore, one can apply the stability result for solutions of the prescribed mean curvature equation proved in \cite[Proposition 4.4]{LS16a} to this sequence of domains.

Before dealing with the minimality of $\Omega_{\bold 0}$, we show that the topological boundary $\de\Omega_{\bold 0}$ coincides with the reduced boundary $\de^{*}\Omega_{\bold 0}$.

\begin{prop}\label{porosoSR}
Under the above assumptions (i)-(iv) one has $\de \Omega_{\bold 0} = \de^{*}\Omega_{\bold 0}$.
\end{prop}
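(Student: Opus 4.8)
I would prove the statement by establishing the two inclusions separately. The inclusion $\de^{*}\Omega_{\bold 0}\subseteq\de\Omega_{\bold 0}$ is immediate: since $\Omega_{\bold 0}$ is open, its points have density $1$ and interior points of its complement have density $0$, whereas every $x\in\de^{*}\Omega_{\bold 0}$ has density $\tfrac12$ by Theorem~\ref{thm:Fed}; thus $x$ is neither in $\Omega_{\bold 0}$ nor interior to the complement, i.e. $x\in\de\Omega_{\bold 0}$. For the reverse inclusion I would first record that $\Omega_{\bold 0}$ has finite perimeter, because $\chi_{\Omega_{\bold 0}}=\chi_{B_1}-\sum_{\bj}\chi_{B_{\bj}}$ (the closed balls $\overline{B_{\bj}}$ being pairwise disjoint and, by (iv), compactly contained in $B_1$ since $r_{\bj}\le 2^{-18}\e_{\bj}^{3}<\e_{\bj}$), so that $P(\Omega_{\bold 0})\le 2\pi+\sum_{\bj}2\pi r_{\bj}<\infty$ by (i). Since the circles $\de B_1$ and $\{\de B_{\bj}\}_{\bj}$ are mutually disjoint, the measures $D\chi_{B_1}$ and $D\chi_{B_{\bj}}$ are mutually singular; consequently, for any ball $B_r(p)$,
\[
|D\chi_{\Omega_{\bold 0}}|(B_r(p))=\H^{1}(\de B_1\cap B_r(p))+\sum_{\bj}\H^{1}(\de B_{\bj}\cap B_r(p)).
\]

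Next I would identify the topological boundary. As the centres $x_{\bj}$ accumulate only on $\de B_1$, every point of the open $B_1$ lying outside all the holes and off every circle is interior to $\Omega_{\bold 0}$, and one gets $\de\Omega_{\bold 0}=\de B_1\cup\bigcup_{\bj}\de B_{\bj}$. Along each $\de B_{\bj}$ the set $\Omega_{\bold 0}$ locally coincides with the complement of a disk, so these are smooth boundary points and lie in $\de^{*}\Omega_{\bold 0}$. It remains to prove $\de B_1\subseteq\de^{*}\Omega_{\bold 0}$, which is the only real difficulty, since the holes accumulate densely on the first-quadrant arc of $\de B_1$ and there $\de\Omega_{\bold 0}$ is not a graph.

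The heart of the argument is a decay estimate for the holes meeting a small ball $B_r(p)$ centred at $p\in\de B_1$. If $B_{\bj}\cap B_r(p)\neq\emptyset$ then $|x_{\bj}-p|\le r+r_{\bj}$, while $|x_{\bj}-p|\ge\mathrm{dist}(x_{\bj},\de B_1)=\e_{\bj}$; combined with $r_{\bj}\le\tfrac12\e_{\bj}$ this forces $\e_{\bj}\le 2r$. Enumerating $J$ as $\bj_1\prec\bj_2\prec\cdots$ in lexicographic order, assumption (iii) makes $(\e_{\bj_m})_m$ strictly decreasing with ratio at most $3/10$; so if $\bj_{m_0}$ is the first index with $\e_{\bj_{m_0}}\le 2r$, then $\e_{\bj_{m_0+k}}\le(3/10)^{k}\,2r$ for all $k\ge0$, and (iv) gives
\[
\sum_{\bj\,:\,\e_{\bj}\le 2r} r_{\bj}\le 2^{-18}\!\!\sum_{\bj\,:\,\e_{\bj}\le 2r}\!\!\e_{\bj}^{3}\le 2^{-18}(2r)^{3}\sum_{k\ge0}(3/10)^{3k}\le 2^{-14}r^{3}=o(r).
\]

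Finally I would conclude that $p\in\de^{*}\Omega_{\bold 0}$ with normal $p$. The area of the holes inside $B_r(p)$ is at most $\pi\sum_{\e_{\bj}\le2r}r_{\bj}^{2}=o(r^{2})$, so $p$ has density $\tfrac12$ and thus $p\in\de^{e}\Omega_{\bold 0}$. For the normal, the hole contribution to the numerator is bounded by $\sum_{\e_{\bj}\le2r}\H^{1}(\de B_{\bj}\cap B_r(p))\le2\pi\sum_{\e_{\bj}\le2r}r_{\bj}=o(r)$, while $\H^{1}(\de B_1\cap B_r(p))=2r+o(r)$; hence
\[
\frac{D\chi_{\Omega_{\bold 0}}(B_r(p))}{|D\chi_{\Omega_{\bold 0}}|(B_r(p))}=\frac{D\chi_{B_1}(B_r(p))}{\H^{1}(\de B_1\cap B_r(p))}+o(1)\xrightarrow[r\to0^{+}]{}-p\in\mathbb{S}^{1},
\]
the last limit holding because $\de B_1=\de^{*}B_1$ with outer normal $p$. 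By the definition of reduced boundary in Theorem~\ref{thm:DeGiorgi} this yields $p\in\de^{*}\Omega_{\bold 0}$, completing the proof. The main obstacle is that this estimate must hold uniformly for every $p\in\de B_1$, including the points of the dense accumulation arc; assumptions (iii) and (iv) are tailored exactly to this, ensuring that at scale $r$ the holes meeting $B_r(p)$ have total radius $O(r^{3})$, negligible against the length $\sim r$ of the smooth arc $\de B_1$.
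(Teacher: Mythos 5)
Your proof is correct, and it takes a genuinely different route from the paper's. The paper fixes a point $y$ on the closed first-quadrant arc of $\de B_1$ (the other boundary points being trivially smooth), reduces membership in $\de^{*}\Omega_{\bold 0}$ to the perimeter-density criterion \eqref{eq:stimaridotta}, $P(\Omega_{\bold 0};B_s(y))\le 2s+o(s)$, quoted as standard, and then verifies that bound by a per-row counting argument: the angular separation of the centres $x_{(j_1,j_2)}$ yields the cardinality bound \eqref{eq:cardJ2} on how many holes of row $j_1$ can meet $B_s(y)$, and the super-geometric decay of the radii across rows makes the resulting series negligible. You instead verify De Giorgi's definition (Theorem~\ref{thm:DeGiorgi}) directly: you decompose $D\chi_{\Omega_{\bold 0}}=D\chi_{B_1}-\sum_{\bj}D\chi_{B_{\bj}}$ into mutually singular measures, compute the density $1/2$, and show the normalized Gauss--Green ratio converges to $-p$, so you even get the outer normal $\nu(p)=p$ explicitly rather than invoking an unproved equivalence. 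The key estimate is also organized differently: both proofs rest on the observation that a hole meeting $B_r(p)$ must satisfy $\e_{\bj}\le 2r$ (implicit in the paper's definition of $J_2(j_1,s)$), but where the paper then counts holes row by row, you note that the set of such indices is a tail of the lexicographic enumeration along which $\e_{\bj}$ decays geometrically by (iii), so (iv) gives $\sum_{\e_{\bj}\le 2r}r_{\bj}\le 2^{-14}r^3=o(r)$ in one stroke, with no angular counting at all. Your version is tighter to hypotheses (iii)--(iv) and arguably cleaner; in fact it also quietly repairs a small imprecision in the paper's last display, where the ``$+1$'' term of \eqref{eq:cardJ2} contributes $\sum_{j_1\ge j_1(s)}2\pi r_{(j_1,1)}$, which is only $o(s)$ by exactly the kind of tail-decay argument you make explicit. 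What the paper's combinatorial approach buys in exchange is robustness: it leans mainly on the angular spacing within each row and summability of terms like $(j_1+1)r_{(j_1,1)}$, so it would tolerate weaker decay of the radii than the full strength of the geometric decay along the enumeration that your argument exploits.
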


\begin{proof}
Of course $\de^{*}\Omega_{\bold 0}\subseteq \de\Omega_{\bold 0}$. In order to prove the opposite inclusion we fix $y\in \de\Omega_{\bold 0}$ and argue as follows. If $y \in \de B_{\bj}$ for some $\bj\in J$, or $y\in \de B_1 \setminus \{ z=(z_{1},z_{2})\in \R^2\, :\, z_{1}\ge 0,z_{2}\ge 0\}$, then there exists a neighborhood $U_{y}$ of $y$ such that $\de\Omega_{\bold 0} \cap U_{y}$ is an arc of $\de B_1$ or $\de B_{\bj}$, hence trivially $y\in \de^{*}\Omega_{\bold 0}$. Assume now that $y\in \de B_1$ with non-negative coordinates $y_{1},y_{2}$. It is standard to check that, in this case, $y\in \de^{*}\Omega_{\bold 0}$ if and only if 
\begin{equation}\label{eq:stimaridotta}
P(\Omega_{\bold 0}; B_{s}(y)) \le 2s + o(s),\quad s\to 0\,. 
\end{equation}
In order to show \eqref{eq:stimaridotta} we first set 
\[
J_{2}(j_{1},s) = \Big\{j_{2}\in \{1,\dots,j_{1}\}:\ |x_{\bj}-y|<s+r_{\bj}<2s\Big\}\,.
\]
Then there exists a least index $j_{1}(s)\in \N$ such that $J_{2}(j_{1},s)$ is empty whenever $j_{1}<j_{1}(s)$, while in general we obtain 
\begin{equation}\label{eq:cardJ2}
\# J_{2}(j_{1},s) \le 1+\frac{32(j_{1}+1)s}{\pi}\quad \text{when $j_{1}\ge j_{1}(s)$.} 
\end{equation}
To prove this estimate on the cardinality of $J_{2}(j_{1},s)$ we observe that for $\bj=(j_{1},j_{2})$ and $\mathbf{j'} = (j_{1},j_{2}')$ belonging to $J_{2}(j_{1},s)$ we have
\begin{equation}\label{eq:chain01}
\frac{1}{2} \Big|(\cos \theta_{\bj}-\cos \theta_{\mathbf{j'}}, \sin\theta_{\bj} - \sin\theta_{\mathbf{j'}})\Big| \le |x_{\bj}-x_{\mathbf{j'}}| \le |x_{\bj}-y| + |x_{\mathbf{j'}}-y| < 4s\,,
\end{equation}
where for the first inequality we have also used the fact that $|x_{\bj}|> \frac 12$ for all $\bj$. Then, setting 
\[
h = |\theta_{\bj}-\theta_{\mathbf{j'}}| = \frac{|j_{2}' - j_{2}|\pi}{2(j_{1}+1)}
\]
one easily obtains from \eqref{eq:chain01} that
\[
\sin h \le \left|(\cos \theta_{\bj}-\cos \theta_{\mathbf{j'}}, \sin\theta_{\bj} - \sin\theta_{\mathbf{j'}})\right| <8s\,,
\]
whence assuming $s< \frac 1{16}$ one deduces 
\[
h \le 16s\,,
\]
which implies $|j_{2}-j_{2}'| \le 32(j_{1}+1)s/\pi$. Then \eqref{eq:cardJ2} follows at once. In conclusion we find 
\begin{align*}
P(\Omega_{\bold 0}; B_{s}(y)) &= 2s+o(s) + P\left(\bigcup_{\bj\in J}B_{\bj}; B_{s}(y)\right) \le 2s+o(s) + \sum_{j_{1}=1}^{\infty} \sum_{j_{2}\in J_{2}(j_{1},s)} 2\pi r_{\bj}\\
&\le 2s+o(s) + s\sum_{j_{1}=j_{1}(s)}^{\infty} [2\pi + 64(j_{1}+1)] r_{(j_{1},1)} = 2s+o(s)
\end{align*}
where the last equality relies on the fact that
\[
kr_{(k,1)} \le k \e_{(k,1)}^{3} \le k\e_{\mathbf 1}^{3} \left(\frac{3}{10}\right)^{3(k^{2}-k)/2}
\]
which follows by (ii), (iii) and (iv). This latter says that the sum converges.
\end{proof}

By Theorem \ref{thm:existenceofCS}, $\Omega_{\bold 0}$ admits at least one Cheeger set. We will denote by $E$ a Cheeger set of $\Omega_{\bold 0}$. The main goal now is to show that, necessarily, $E = \Omega_{\bold 0}$.

\begin{thm}\label{selfCheegernessOmega}
Let $\e_{\bj}$ and $r_{\bj}$ be such that (i)-(iv) hold. Then, $\Omega_{\bold 0}$ is a minimal Cheeger set.
\end{thm}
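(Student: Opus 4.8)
The plan is to show that \emph{every} Cheeger set $E$ of $\Omega_{\bold 0}$ coincides with $\Omega_{\bold 0}$, by proving $\de E\cap\Omega_{\bold 0}=\emptyset$; since $\Omega_{\bold 0}$ is open, connected and $|E|>0$, this forces $E=\Omega_{\bold 0}$, which is exactly the minimal Cheeger property \eqref{eq:MC}. First I would pin down the Cheeger constant. From $\Omega_{\bold 0}\subset B_1$ and Proposition~\ref{prop:monotonicity} one gets $h(\Omega_{\bold 0})\ge h(B_1)=2$, while testing the ratio on $\Omega_{\bold 0}$ itself and using (i) (which bounds $\sum_\bj 2\pi r_\bj$ against $|\Omega_{\bold 0}|=\pi(1-\sum_\bj r_\bj^2)$) gives $h(\Omega_{\bold 0})\le \frac{2}{1-\sum_\bj r_\bj}\le \frac{257}{128}$. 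By Proposition~\ref{prop:proprietaCS}, $\de E\cap\Omega_{\bold 0}$ is a disjoint union of circular arcs of radius $r=h(\Omega_{\bold 0})^{-1}$, each of length at most $\pi r$, meeting $\de\Omega_{\bold 0}$ tangentially (every boundary point is regular, lying on a circle), and $|E|\ge 4\pi r^2$. Since $|E|\le|\Omega_{\bold 0}|<\pi$, the bound $4\pi r^2\le|E|$ already forces $r<\tfrac12$, i.e.\ $h(\Omega_{\bold 0})>2$; combined with the upper bound we get $r\in[\tfrac{128}{257},\tfrac12)$, so $4r^2>0.99$ and $|E|>0.99\,\pi$. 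In particular two disjoint Cheeger subsets could not both fit inside $\Omega_{\bold 0}$, so $E$ is P-indecomposable, and $\Omega_{\bold 0}\setminus E$ has very small measure.

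Assuming $E\neq\Omega_{\bold 0}$, I fix one arc $\alpha$ of $\de E\cap\Omega_{\bold 0}$, with endpoints $p,q\in\de\Omega_{\bold 0}=\de B_1\cup\bigcup_\bj\de B_\bj$, and rule out every location of $p,q$. If both lie on $\de B_1$, tangency forces the centre of the circle carrying $\alpha$ to lie on the ray through each tangent point at distance $1-r$ from the origin, so $p=q$ and $\alpha$ is a full circle of length $2\pi r>\pi r$, contradicting the length bound. If both lie on the \emph{same} hole $\de B_\bj$, then $\alpha$ would be tangent to the circle $\de B_\bj$ at two distinct points, which is impossible for two distinct circles. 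Both of these cases are short.

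The remaining, genuinely delicate case is when at least one endpoint, say $q$, lies on a hole $\de B_\bj$ while $p$ lies on $\de B_1$ or on a second hole $\de B_\bi$. Here I would use the quantitative smallness (iv) and the decay (iii). Comparing $E$ with $E\cup\Gamma$, where $\Gamma$ is the component of $\Omega_{\bold 0}\setminus E$ adjacent to $\alpha$, and using that $E$ is Cheeger, yields $\H^1(\de\Gamma\cap\de\Omega_{\bold 0})\ge \H^1(\de\Gamma\cap\Omega_{\bold 0})+h(\Omega_{\bold 0})\,|\Gamma|$. The geometric key is that an arc of radius $r\approx\tfrac12$ has gentle curvature $1/r$ and cannot make a short loop: if $\Gamma$ encircled a hole, its free part would have to turn by $2\pi$, costing length $\ge 2\pi r$, whereas $\de\Gamma\cap\de\Omega_{\bold 0}$ can only collect at most $2\pi r_\bj$ from that hole, and by (iv) this is far too short to dominate the free arc together with $h(\Omega_{\bold 0})\,|\Gamma|$. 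To handle the non-encircling configurations I would localise $\alpha$ in the thin collar near $\de B_1$ in the first quadrant: writing boundary distance (depth) against angular position, $\alpha$ is essentially a parabola pinned at its tangency point with $\de B_1$, so reaching a hole at depth $\e_\bj$ pins the whole arc, and I would show that it must then either cross $\de B_1$, or meet the interior of another, shallower hole, contradicting $\alpha\subset\overline{\Omega_{\bold 0}}$.

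I expect this last case to be the main obstacle. Unlike Cases~1--3 of the fat-Cantor example, here there is neither a reflection symmetry nor a small central region to exploit, so the contradiction must come from a careful interplay between the fixed radius $r\approx\tfrac12$, the cubic smallness $r_\bj\le 2^{-18}\e_\bj^{3}$, and the geometric decay $\e_{\bj+1}\le\tfrac{3}{10}\e_\bj$. The cleanest route seems to be to combine the tangency of $\alpha$ with $\de B_1$ (which rigidly determines the arc once its tangent point is fixed) with the impossibility, guaranteed by (iii)--(iv), of threading such an arc tangentially onto a tiny hole without leaving $\overline{\Omega_{\bold 0}}$ or hitting a neighbouring hole, thereby closing the argument and showing $\de E\cap\Omega_{\bold 0}=\emptyset$.
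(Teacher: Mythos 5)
Your preliminary bounds on $h(\Omega_{\bold 0})$, the indecomposability argument, and your disposal of cases (a) and (b) are correct and coincide with the paper's Step 1. But the substance of the theorem is precisely cases (c) and (d), and there your proposal is a plan rather than a proof: ``I would show that it must then either cross $\de B_1$, or meet the interior of another, shallower hole'' is exactly the statement that needs an argument, and you acknowledge as much. Worse, that statement is not merely unproven --- it is essentially false, so the route you sketch cannot be completed. Because the holes are cubically small compared with their depths ($r_\bj \le 2^{-18}\e_\bj^3$ by (iv)), an arc of radius $r\approx\tfrac12$ internally tangent to $\de B_1$ and tangent to one prescribed hole meets a further hole $B_\bi$ only if its depth profile at the angle $\theta_\bi$ happens to match $\e_\bi$ to within roughly $r_\bi$; this is a narrow, non-generic coincidence, and arcs of this kind lying entirely in $\overline{\Omega_{\bold 0}}$ do exist. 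Hence no contradiction can be extracted from the geometry of a single arc: the obstruction is variational, not geometric. Your ``fill the adjacent component $\Gamma$'' inequality is correct but likewise insufficient, because $\de\Gamma\cap\de\Omega_{\bold 0}$ may contain long arcs of $\de B_1$, not just the $2\pi r_\bj$ you budget for, and your turning-number remark only covers the configuration in which $\Gamma$ encircles a hole.

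What the paper does instead, and what is missing from your proposal, is an explicit competitor construction supported by quantitative lemmas: first, a density estimate (Lemma \ref{lem:upperdensity}) showing $B_{1/2}\subset E$ (Lemma \ref{lem:pallaunmezzo}); second, the choice of the point $p_0$ of $\de E\cap\Omega_{\bold 0}$ closest to the origin, which is shown to be an \emph{endpoint} of an arc lying on some $\de B_\bj$ (Lemma \ref{lem:pzeroend}, which uses Lemma \ref{lem:2.1}); third, the angle estimates of Lemmas \ref{lem:lemma25} and \ref{lem:2.6}, asserting that the arcs leaving $\de B_\bj$ make angles larger than $\pi/2 + d_0/4$ with the radial direction. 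Since $\de E$ must leave $\de B_\bj$ by at least two arcs, the paper adds to $E$ the small region bounded by the two arcs $\Gamma_{p_0}$, $\Gamma_{q_0}$, the portion of $\de B_\bj$ around its north pole, and the chord joining points $p,q$ chosen at distance $d_{q_0}/16$ along the arcs; the angle estimates combined with (iii)--(iv) yield $\delta P \le 2r_\bj(\pi+1) - d_{q_0}^3/2^{11} < 0$, so the competitor has strictly smaller perimeter and strictly larger area, contradicting minimality of the Cheeger ratio. This quantitative competitor step --- not any single-arc geometric impossibility --- is the core of the proof, and it is absent from your proposal.
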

The proof of Theorem \ref{selfCheegernessOmega} will require some preliminary results. We start by defining the following quantity
\[
\delta = \frac{1+\sum_{\bj}r_{\bj}}{1-\sum_{\bj}r_{\bj}^{2}} - 1\,,
\]
which will be used later on.

\begin{prop}\label{prop:defdelta}
Let $\Omega_{\bold 0}$ be defined as in \eqref{defOmegaPoroso} and let $E$ be a Cheeger set of $\Omega_{\bold 0}$. Assume that (i)-(iv) hold. Then, 
\begin{align}\label{stimePerEmm-1}
2\le h(\Omega_{\bold 0}) \le 2(1+\delta),\\\label{stimePerEmm-2}
|E| \geq \frac{\pi}{(1+\delta)^{2}}\,.
\end{align}
\end{prop}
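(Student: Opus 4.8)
The plan is to obtain \eqref{stimePerEmm-1} by sandwiching $h(\Omega_{\bold 0})$ between the Cheeger constant of the disk $B_1$ (from below) and the quotient ``perimeter over area'' of $\Omega_{\bold 0}$ itself (from above), and then to read off \eqref{stimePerEmm-2} directly from the volume lower bound of Proposition \ref{prop:proprietaCS}(iv). For the lower bound, since $\Omega_{\bold 0}\subset B_1$, the monotonicity of the Cheeger constant (Proposition \ref{prop:monotonicity}) gives $h(\Omega_{\bold 0})\ge h(B_1)$. As the unit disk is its own Cheeger set, $h(B_1)=P(B_1)/|B_1|=2\pi/\pi=2$, whence $h(\Omega_{\bold 0})\ge 2$.

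For the upper bound I would simply test the definition of $h(\Omega_{\bold 0})$ with the admissible competitor $\Omega_{\bold 0}$ itself, so that $h(\Omega_{\bold 0})\le P(\Omega_{\bold 0})/|\Omega_{\bold 0}|$, and compute the two quantities. The structural input is that the closed balls $\overline{B_{\bj}}$ are pairwise disjoint (already noted as a consequence of (iii)--(iv)) and strictly contained in $B_1$: indeed $\rho_{\bj}+r_{\bj}=1-\e_{\bj}+r_{\bj}<1$, because (iv) forces $r_{\bj}<\e_{\bj}$. Hence the holes are mutually disjoint and detached from $\de B_1$, so areas and perimeters add up:
\[
|\Omega_{\bold 0}| = |B_1| - \sum_{\bj}|B_{\bj}| = \pi\Big(1-\sum_{\bj}r_{\bj}^{2}\Big),
\]
while, using Proposition \ref{porosoSR} together with Theorem \ref{thm:DeGiorgi}(iii),
\[
P(\Omega_{\bold 0}) = \H^{1}(\de\Omega_{\bold 0}) = \H^{1}(\de B_1) + \sum_{\bj}\H^{1}(\de B_{\bj}) = 2\pi\Big(1+\sum_{\bj}r_{\bj}\Big),
\]
both series converging by (i). Dividing yields $P(\Omega_{\bold 0})/|\Omega_{\bold 0}| = 2(1+\sum_{\bj}r_{\bj})/(1-\sum_{\bj}r_{\bj}^{2}) = 2(1+\delta)$, which is \eqref{stimePerEmm-1}.

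Finally, \eqref{stimePerEmm-2} follows by inserting the upper bound $h(\Omega_{\bold 0})\le 2(1+\delta)$ into Proposition \ref{prop:proprietaCS}(iv):
\[
|E|\ge \pi\Big(\tfrac{2}{h(\Omega_{\bold 0})}\Big)^{2}\ge \pi\Big(\tfrac{2}{2(1+\delta)}\Big)^{2}=\frac{\pi}{(1+\delta)^{2}}\,.
\]
The computation is elementary; the only point deserving genuine care is the additivity of the perimeter over the infinitely many holes. I would justify it through the identity $\de\Omega_{\bold 0}=\de^{*}\Omega_{\bold 0}$ of Proposition \ref{porosoSR}, which, combined with Theorem \ref{thm:DeGiorgi}(iii), turns the perimeter into the $\H^{1}$-measure of the topological boundary: the accumulation points of the holes all lie on $\de B_1$ and therefore contribute no extra measure, so the decomposition of $\H^{1}(\de\Omega_{\bold 0})$ is clean. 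This is the main, though mild, obstacle.
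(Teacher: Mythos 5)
Your proposal is correct and takes essentially the same route as the paper: sandwich $h(\Omega_{\bold 0})$ between $2$ and $P(\Omega_{\bold 0})/|\Omega_{\bold 0}|=2(1+\delta)$, then feed the upper bound into Proposition \ref{prop:proprietaCS}(iv) to get \eqref{stimePerEmm-2}. The paper's proof is just terser—it leaves the perimeter/area computation behind the definition of $\delta$ implicit and obtains the lower bound $h(\Omega_{\bold 0})\ge 2$ from the inclusion $\Omega_{\bold 0}\subset B_1$ together with Proposition \ref{prop:proprietaCS} rather than via Proposition \ref{prop:monotonicity} and $h(B_1)=2$—but the substance, including your careful justification of $P(\Omega_{\bold 0})=2\pi\bigl(1+\sum_{\bj}r_{\bj}\bigr)$ via Proposition \ref{porosoSR}, matches the intended argument.
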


\begin{proof}
The first inequality in \eqref{stimePerEmm-1} follows directly from the inclusion $\Omega_{\bold 0}\subset B$ and from Proposition \ref{prop:proprietaCS}, while the second is a consequence of $h(\Omega_{\bold 0}) \le \frac{P(\Omega_{\bold 0})}{|\Omega_{\bold 0}|}$. Then \eqref{stimePerEmm-2} follows from \eqref{eq: bound on volume} at once.
\end{proof}
%%%%%%%%%

Notice that (i) implies $\delta < 1/2^7$. Indeed let $\eta = \sum_\bj r_\bj$. Then, since $\eta > \sum_\bj r_\bj^2$ one has
\begin{equation}\label{eq:bounddelta}
\delta = \frac{1+\sum_{\bj}r_{\bj}}{1-\sum_{\bj}r_{\bj}^{2}} - 1  \leq \frac{1+\eta}{1-\eta} - 1 \le \frac{1}{2^7}\,.
\end{equation}
Thus, by Proposition \ref{prop:defdelta} we have
\begin{equation}\label{stimasuhperdeltascelto}
2\le h(\Omega_{\bold 0}) \le 2(1+\delta) < 3\,.
\end{equation}

\begin{lem}\label{lem:2.1}
Let $\Gamma$ be an arc swept by a disk of radius $r< 1/2$ contained in an annulus of inner and outer radii equal to, respectively, $1/2$ and $1$. Denote by $o$ the center of the annulus and by $a,b$ the endpoints of $\Gamma$. If the region $R$ enclosed by (the vectors) $a,b$ and $\Gamma$ is convex then 
\[
|p| \geq \min\{ |a|, |b|\}\qquad \forall\, p \in \Gamma\,.
\]
\end{lem}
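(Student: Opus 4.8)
The plan is to reduce everything to a one–variable analysis of the distance to $o$ along $\Gamma$ and then to extract a contradiction from convexity via a supporting line. Take $o$ as the origin, write $\Gamma$ as an arc of the circle of radius $r$ centered at $c$, and parametrize $p(t)=c+r(\cos t,\sin t)$. With $c=|c|(\cos\gamma,\sin\gamma)$ one gets
\[
|p(t)|^{2}=|c|^{2}+r^{2}+2r|c|\cos(t-\gamma),
\]
so that, along the whole circle, $t\mapsto|p(t)|$ has a single minimum point $q=(|c|-r)\,c/|c|$ (the point of the circle nearest to $o$) and a single, diametrically opposite, maximum point. Consequently, on the parameter interval of $\Gamma$ the minimum of $|\cdot|$ is attained at an endpoint as soon as $q$ does not belong to the relative interior of $\Gamma$; and this is precisely the assertion $|p|\ge\min\{|a|,|b|\}$. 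Thus the entire proof reduces to showing that $q$ is not an interior point of $\Gamma$.

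I would split into two cases according to the position of $o$ relative to the disk $\overline{D(c,r)}$. If $|c|\le r$, then $|q|=r-|c|\le r<\tfrac12$; since $\Gamma$ lies in the annulus, every $p\in\Gamma$ satisfies $|p|\ge\tfrac12$, so $q\notin\Gamma$ and there is nothing more to prove. The substantial case is $|c|>r$, which I treat by contradiction, assuming that $q$ lies in the relative interior of $\Gamma$.

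Here convexity is used. Since $q$ is a smooth point of $\partial R$ lying on the arc, the tangent line $\ell$ to $\Gamma$ at $q$ supports the convex set $R$, so $R$ is contained in one of the closed half-planes bounded by $\ell$. The orientation is forced: if $R$ lay on the side of $\ell$ opposite to $c$, then along the arc the boundary of $R$ would be locally concave (the region would look like the exterior of $D(c,r)$), contradicting convexity; hence $R$ lies in the closed half-plane on the side of $c$. On the other hand, from $c-q=r\,c/|c|$ and $o-q=-(|c|-r)\,c/|c|$ one computes
\[
\langle o-q,\;c-q\rangle=-(|c|-r)\,r<0,
\]
so $o$ lies strictly on the side of $\ell$ opposite to $c$. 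Therefore $o\notin R$, which contradicts the fact that $o$ is a vertex of $R$ (the two segments bounding $R$ issue from $o$). This rules out $q$ being interior to $\Gamma$ and closes the argument.

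I expect the delicate point to be the orientation step, namely the verification that convexity of $R$ together with $\Gamma\subset\partial R$ forces $R$ into the closed half-plane on the $c$-side of the tangent at $q$; this is exactly where the convexity hypothesis is indispensable, since the conclusion fails for non-convex regions whose arc dips toward $o$. The case distinction on whether $o$ lies inside $\overline{D(c,r)}$ is what lets the annulus constraint take over precisely in the regime where the supporting-line computation would otherwise have the wrong sign.
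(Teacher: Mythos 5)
Your proof is correct, and it runs on the same engine as the paper's: a contradiction at the distance-critical point of $\Gamma$, where convexity of $R$ forces the tangent line to be a supporting line with $R$ (hence its vertex $o$) on the same side as the center $c$. The packaging, however, differs in ways worth noting. The paper works at an assumed interior minimizer $p_0$: minimality gives collinearity of $o$, $c$, $p_0$; convexity gives the same-side property; then $r<1/2$ together with the annulus bound $|p_0|\ge 1/2$ forces $c$ to lie \emph{between} $o$ and $p_0$ (so $p_0$ is the point of the circle \emph{farthest} from $o$), and the triangle inequality $|a|\le |c|+|c-a|=|c|+r=|p_0|$ closes the contradiction. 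You instead make the unimodality of $t\mapsto |p(t)|$ along the circle explicit, which pins down the near point $q=(|c|-r)\,c/|c|$ as the only possible interior minimizer; the case $|c|\le r$ is then excluded by the annulus constraint exactly where the paper invokes $r<1/2$, and in the case $|c|>r$ your sign computation $\langle o-q,\,c-q\rangle=-r(|c|-r)<0$ contradicts the supporting-line property outright, so you never need to consider the far-point alternative or use the triangle inequality. Two small points: your orientation step, which you rightly flag as the crux, is most cleanly justified by noting that the arc near $q$ lies in $\partial R\subset\overline{R}$ and strictly on the $c$-side of $\ell$, so any supporting half-plane at $q$ must be the closed $c$-side one (this incidentally supplies the justification the paper leaves implicit for its claim that ``$c$ and $o$ lie on the same half-plane''); and the degenerate case $c=o$, where $q$ is undefined, is vacuous here, since it would place all of $\Gamma$ at distance $r<1/2$ from $o$, outside the annulus.
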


\begin{proof}
The configuration described in the statement is depicted in Figure \ref{fig:lemma21}. To prove the lemma we argue by contradiction and suppose that there exists $p_0 \in \Gamma \setminus \{a, b\}$ such that
\[
|p_{0}| = \min_{p\in \Gamma} |p|< \min\{|a|, |b|\}\,.
\]
If we denote by $c$ the center of the disk sweeping the arc $\Gamma$, by minimality of $p_0$ we have that $p_0, c, o$ lie on the same line. Moreover, being the region $R$ convex by our assumption, we infer that $c$ and $o$ lie on the same half-plane cut by the tangent in $p_0$ to $\Gamma$. We now claim that $c$ lies in between $o$ and $p_{0}$. If this were not the case one would have $|p_{0}-c| > |p_0|$ which in turn implies $r>1/2$ against our hypotheses. Therefore we have $|p_0-c| +|c| = |p_0|$ and by the triangular inequality
\[
|a| \leq |c|+|c-a| = |c|+|p_0-c| = |p_0|,
\]
against our initial assumption.
\end{proof}

\begin{figure}[t]
\centering
\begin{tikzpicture}[line cap=round,line join=round,>=triangle 45,x=.5cm,y=.5cm]
\clip(-6.5,-6.5) rectangle (6.5,6.5);
\fill[color=qqzzqq,fill=qqzzqq,fill opacity=0.1] (0.33046314381140673,-1.5994797005893902) -- (2.980463143811406,-1.0594797005893901) -- (0.,0.) -- (-0.04953685618859338,-4.259479700589383) -- cycle;
\draw(0.,0.) circle (3.cm);
\draw(0.,0.) circle (1.5cm);
\draw [shift={(0.33046314381140673,-1.5994797005893902)}] plot[domain=-1.7126933813990615:0.2010213597695852,variable=\t]({1.*2.6870057685088735*cos(\t r)+0.*2.6870057685088735*sin(\t r)},{0.*2.6870057685088735*cos(\t r)+1.*2.6870057685088735*sin(\t r)});
\draw [dash pattern=on 3pt off 3pt] (0.,0.)-- (0.8741340098383927,-4.230909348030764);
\draw [dotted] (-7.372690047545156,-5.934758045490492)-- (9.005395739465408,-2.550936596056631);
\draw (0.,0.)-- (-0.04953685618859338,-4.259479700589383);
\draw (0.,0.)-- (2.980463143811406,-1.0594797005893901);
\draw [shift={(0.33046314381140673,-1.5994797005893902)},color=qqzzqq,fill=qqzzqq,fill opacity=0.1]  (0,0) --  plot[domain=-1.7126933813990615:0.2010213597695852,variable=\t]({1.*2.6870057685088735*cos(\t r)+0.*2.6870057685088735*sin(\t r)},{0.*2.6870057685088735*cos(\t r)+1.*2.6870057685088735*sin(\t r)}) -- cycle ;
\begin{scriptsize}
\draw (-3.8,6) node[anchor=north west] {$B_1$};
\draw (-3,3.4) node[anchor=north west] {$B_{\frac12}$};
\draw (2.8,-2.5) node[anchor=north west] {$\Gamma$};
\draw (0.5,-0.8) node[anchor=north west] {$c$};
\draw[color=black] (-0.3323399724962303,0.14393750253352733) node {$o$};
\draw[color=black] (-0.4,-4.2) node {$a$};
\draw[color=black] (3.3,-1.) node {$b$};
\draw[color=uuuuuu] (1.1,-4.8) node {$p_0$};
\end{scriptsize}
\end{tikzpicture}
\caption{The configuration of Lemma \ref{lem:2.1}.}
\label{fig:lemma21}
\end{figure}
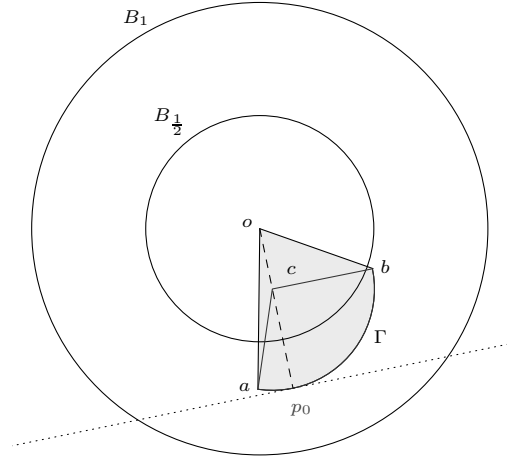

\begin{lem}[Density estimate]\label{lem:upperdensity}
Let $E$ be a Cheeger set of $A\subset \R^{2}$. Fix $z\in A$ and $r>0$ such that $B_{r}(z) \subset A$. Then 
\begin{equation}\label{stimadensita}
|B_{r}(z)\setminus E| \le \pi r^{2}/36 \quad\Rightarrow\quad B_{2r/3}(z) \subset E\,.
\end{equation}
\end{lem}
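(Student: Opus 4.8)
The plan is to argue by contradiction through a volume--density differential inequality for the ``hole'' of $E$ inside $B_r(z)$. Set $m(\rho):=|B_\rho(z)\setminus E|$ for $\rho\in(0,r]$ and suppose the conclusion fails, i.e. $m(2r/3)>0$. Since $m(\rho)=\int_0^\rho \H^1(\de B_t(z)\cap E^{(0)})\,dt$, the function $m$ is absolutely continuous and non-decreasing, with $m'(\rho)=\H^1(\de B_\rho(z)\cap E^{(0)})$ for a.e.\ $\rho$.

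First I would exploit minimality of $E$ by testing with the competitor $F=E\cup B_\rho(z)$, which lies in $A$ since $B_r(z)\subset A$. As $P(E)/|E|=h(A)=:h$ and $P(F)/|F|\ge h$, and $|F|=|E|+m(\rho)$, one gets $P(E\cup B_\rho(z))\ge P(E)+h\,m(\rho)$. Using the slicing identities, valid for a.e.\ $\rho$, namely $P(E\cup B_\rho(z))=P(E;\R^2\setminus \overline{B_\rho(z)})+\H^1(\de B_\rho(z)\cap E^{(0)})$ together with $P(E)=P(E;\R^2\setminus\overline{B_\rho(z)})+P(E;B_\rho(z))$, this reduces to
\[
m'(\rho)\ \ge\ P(E;B_\rho(z))+h\,m(\rho)\ \ge\ P(E;B_\rho(z)).
\]

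Next I would bring in the planar isoperimetric inequality applied to the hole $D_\rho:=B_\rho(z)\setminus E$. For a.e.\ $\rho$ the only contribution of $\de^* D_\rho$ on the circle comes from $E^{(0)}$, so $P(D_\rho)=P(E;B_\rho(z))+m'(\rho)$, while $P(D_\rho)\ge 2\sqrt{\pi\,m(\rho)}$. Combining these with the bound $P(E;B_\rho(z))\le m'(\rho)$ just obtained yields
\[
m'(\rho)\ \ge\ \sqrt{\pi\,m(\rho)}\,,\qquad\text{equivalently}\qquad \frac{d}{d\rho}\sqrt{m(\rho)}\ \ge\ \frac{\sqrt{\pi}}{2}\quad\text{on }\{m>0\}.
\]
Letting $\rho_*=\inf\{\rho:m(\rho)>0\}$, the assumption $m(2r/3)>0$ together with continuity of $m$ and $m(\rho_*)=0$ forces $\rho_*<2r/3$. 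Integrating from $\rho_*$ to $r$ then gives $\sqrt{m(r)}\ge \tfrac{\sqrt\pi}{2}(r-\rho_*)>\tfrac{\sqrt\pi}{2}\cdot\tfrac r3$, that is $|B_r(z)\setminus E|=m(r)>\pi r^2/36$, contradicting the hypothesis. Hence $m(2r/3)=0$, i.e. $B_{2r/3}(z)\subset E$.

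The main obstacle is the measure-theoretic bookkeeping behind the three slicing identities for $P(E\cup B_\rho(z))$, $P(E)$ and $P(D_\rho)$: one must check, for almost every $\rho$, that $\H^1(\de^* E\cap \de B_\rho(z))=0$ and that the boundary of $D_\rho$ on the circle $\de B_\rho(z)$ is exactly $\de B_\rho(z)\cap E^{(0)}$, so that the derivative $m'(\rho)$ correctly accounts for this circular arc. Once these identities are in place, the differential-inequality step and the final integration are routine, and the specific constants $2/3$ and $1/36$ emerge automatically from the growth rate $\sqrt\pi/2$ applied over an interval of length exceeding $r/3$.
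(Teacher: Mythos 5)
Your proof is correct and follows essentially the same route as the paper's: the competitor $E\cup B_\rho(z)$, the slicing identities giving $m'(\rho)\ge P(E;B_\rho(z))$, the isoperimetric inequality applied to the hole $B_\rho(z)\setminus E$ yielding $m'\ge\sqrt{\pi m}$, and integration of this differential inequality to contradict $m(r)\le\pi r^2/36$. The only cosmetic differences are that you integrate from $\rho_*=\inf\{\rho:m(\rho)>0\}$ rather than from $2r/3$ as the paper does, and you spell out the measure-theoretic bookkeeping (a.e.\ validity of the slicing identities) that the paper leaves implicit.
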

\begin{proof}
Let us set $m(r) = |B_{r}(z)\setminus E|$ and define $F = E \cup B_{r}(z)$ as a competitor. The minimality of $E$ implies that 
\begin{align*}
\frac{P(E)}{|E|} &\le \frac{P(F)}{|F|} = \frac{P(E, \R^{2}\setminus \overline{B_{r}(z)}) + m'(r)}{|E|+m(r)}\\ 
&= \frac{P(E) - P(B_{r}(z)\setminus E) + 2m'(r)}{|E|+m(r)}
\end{align*}
for almost all $r>0$, hence
\begin{align*}
\frac{P(E)}{|E|}m(r) + P(B_{r}(z)\setminus E) &\le 2m'(r)\,.
\end{align*}
In particular we find that $P(B_{r}(z)\setminus E) \le 2m'(r)$, therefore by the isoperimetric inequality in $\R^{2}$ we obtain
\begin{equation}\label{diffineq1}
m'(r) \ge \sqrt{\pi} m(r)^{\frac 12}\,.
\end{equation}
If we now assume by contradiction that $m(2r/3)>0$ then we can integrate the differential inequality
\[
\frac{m'(t)}{m(t)^{\frac 12}} \ge \sqrt{\pi}
\]
between $2r/3$ and $r$, thus obtaining
\[
0<m(2r/3)^{\frac 12} \le m(r)^{\frac 12} - \frac{\sqrt{\pi r^{2}}}{6} \le 0\,,
\]
that is a contradiction.
\end{proof}

\begin{lem}\label{lem:pallaunmezzo}
Let $\Omega_{\bold 0}$ be constructed as before. If (i)-(iv) hold, then the disk $B_{1/2}$ is contained in any Cheeger set $E$ of $\Omega_{\bold 0}$.
\end{lem}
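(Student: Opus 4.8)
The plan is to show that $B_{1/2}$ is contained in the Cheeger set $E$ by a density argument, using Lemma~\ref{lem:upperdensity} together with the estimate that the total volume of the removed holes is extremely small (controlled by assumption (i)).

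First I would observe that any point of $B_{1/2}$ is far from $\partial B_1$, since all the holes $B_{\bj}$ have centers $x_{\bj}$ with $|x_{\bj}| = \rho_{\bj} = 1-\e_{\bj} > 1/2$ (by (ii) and (iii)) and radii $r_{\bj}$ that are very small (by (iv)). The key quantitative input is that the balls removed from $B_1$ occupy a negligible volume: the total removed area is $\sum_{\bj} \pi r_{\bj}^{2}$, which by (i) and (iv) is an extremely small number. The goal is to feed this into the density estimate \eqref{stimadensita}: for a suitable radius $r$ and suitable points $z$, the hypothesis $|B_r(z)\setminus E|\le \pi r^2/36$ should be verifiable, forcing $B_{2r/3}(z)\subset E$.

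The main step is the following. Fix a point $z\in B_{1/2}$ and choose a radius $r$ so that $B_r(z)\subset \Omega_{\bold 0}$ is false in general, so instead I would work with a radius large enough that $B_r(z)$ comfortably contains $B_{1/2}$ but the holes inside it have tiny total measure. The cleanest route is to take $z=0$ and apply the density estimate on a well-chosen ball: one checks that $|B_r \setminus E|$ is bounded by $|B_r \cap (\Omega_{\bold 0}^{\mathsf c})| + |B_r\setminus E|_{\text{genuine}}$, where the first term — the holes lying in $B_r$ — has measure at most $\sum_{\bj} \pi r_{\bj}^2 \le \pi (\sum_\bj r_\bj)^2$, which by (i) is bounded by $\pi/(2^8+1)^2$. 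Since this is far below the threshold $\pi r^2/36$ for an appropriate $r$ close to $1/2$, the density estimate applies and yields $B_{2r/3}\subset E$, covering $B_{1/2}$. One must verify $B_r(z)\subset \Omega_{\bold 0}$, or adapt Lemma~\ref{lem:upperdensity} so that holes count as part of $\Omega_{\bold 0}^{\mathsf c}$ (equivalently, as mass missing from $E$), which is the honest version here since $E\subset \Omega_{\bold 0}$ already forces the holes to be absent from $E$.

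\textbf{The main obstacle} I anticipate is technical bookkeeping rather than conceptual: the density estimate in Lemma~\ref{lem:upperdensity} is stated for $B_r(z)\subset A$, whereas here the ball $B_r$ centered near the origin does meet the region where holes live (near $\partial B_1$), so I cannot directly take $A=\Omega_{\bold 0}$ with a ball reaching out to radius near $1$. The fix is to run the density-estimate comparison argument by hand with the competitor $F=E\cup B_r$, tracking that $F$ may stick out of $\Omega_{\bold 0}$ only through the holes, whose total perimeter $\sum_\bj 2\pi r_\bj$ and total area $\sum_\bj \pi r_\bj^2$ are both controlled by (i). One then repeats the differential-inequality integration of Lemma~\ref{lem:upperdensity}, absorbing the small error terms coming from the holes, and concludes $B_{1/2}\subset E$. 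The delicate point is choosing the radius $r$ and verifying that the smallness in (i) beats the constant $1/36$ in the density threshold with room to spare for the error terms.
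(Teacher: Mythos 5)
Your overall strategy (apply the density estimate of Lemma~\ref{lem:upperdensity} with a ball centered at the origin) is the same as the paper's, but there is a genuine gap in how you verify the hypothesis of that lemma. You split $|B_r\setminus E|$ into the hole mass $|B_r\cap \Omega_{\bold 0}^{\mathsf{c}}|$ plus the ``genuine'' mass $|B_r\cap\Omega_{\bold 0}\setminus E|$, and you only ever bound the first piece, by $\sum_{\bj}\pi r_{\bj}^2\le\pi\bigl(\sum_{\bj}r_{\bj}\bigr)^2$. The second piece is never estimated, and nothing in your argument prevents a Cheeger set from missing a large portion of $B_r$: smallness of the removed holes says nothing, by itself, about where $E$ sits inside $\Omega_{\bold 0}$. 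The missing ingredient is the volume lower bound for Cheeger sets, i.e.\ \eqref{stimePerEmm-2} of Proposition~\ref{prop:defdelta} (which descends from $|E|\ge\pi(2/h)^2$, Proposition~\ref{prop:proprietaCS}(iv)), combined with \eqref{eq:bounddelta}. The paper absorbs holes and genuine missing mass in a single stroke:
\[
|B_{3/4}\setminus E|\;\le\;|B_1|-|E|\;\le\;\pi-\frac{\pi}{(1+\delta)^2}\;\le\;2\pi\delta\;\le\;\frac{\pi}{2^6}\;=\;\frac{\pi(3/4)^2}{36}\,,
\]
so no separate accounting of the holes is needed at all; without invoking the lower bound on $|E|$, the density hypothesis simply cannot be checked.

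There is also a quantitative slip that would break your conclusion even if the above were fixed: to cover $B_{1/2}$ by the output $B_{2r/3}(0)\subset E$ of Lemma~\ref{lem:upperdensity} you need $r\ge 3/4$, not ``$r$ close to $1/2$'' (the latter only yields roughly $B_{1/3}\subset E$). The paper takes exactly $r=3/4$, and then the numerology is tight with \emph{no} slack: the threshold $\pi(3/4)^2/36=\pi/2^6$ is met exactly by $2\pi\delta\le 2\pi/2^7$, so your plan of keeping ``room to spare for the error terms'' of a hand-modified comparison argument is not available. Finally, the obstacle you single out --- that $B_r(0)$ might meet the holes, so that Lemma~\ref{lem:upperdensity} would not apply verbatim --- is essentially a non-issue at $r=3/4$: by (ii)--(iv) the holes have centers at distance $1-\e_{\bj}\ge 1-\e_{\mathbf{1}}>3/4$ from the origin and radii $r_{\bj}\le 2^{-18}\e_{\bj}^3$, so $B_{3/4}\subset\Omega_{\bold 0}$ (up to a harmless strengthening of (ii) by the tiny amount $r_{\mathbf{1}}$), and the lemma applies directly, with no need to rerun the differential inequality with hole corrections.
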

\begin{proof}
By \eqref{stimePerEmm-2} and \eqref{eq:bounddelta} we have that 
\[
|B_{3/4}\setminus E| \le |B_{1}| - |E| \le \pi - \frac{\pi}{(1+\delta)^{2}} = 	\frac{2+\delta}{(1+\delta)^{2}}\pi\delta \le \frac{2+\delta}{1+\delta}\pi\delta \le 2\pi\delta \le \frac{\pi (3/4)^{2}}{36}\,,
\]
hence we can apply Lemma \ref{lem:upperdensity} and obtain that $B_{1/2} = B_{\frac{2}{3}\cdot\frac{3}{4}}\subset \Omega_{\bold 0}$ is also contained in $E$.
\end{proof}

Let us fix a Cheeger set $E$ of $\Omega_{\bold 0}$ and assume that $\partial E\cap \Omega_{\bold 0} \neq \emptyset$. Then we consider the (at most countable) collection $\{\Gamma_{k}\}_{k\in \N}$ of the closures of the connected components of $\partial E\cap \Omega_{\bold 0}$. Notice that $\Gamma_{k}$ is a closed circular arc of radius $r = h(\Omega_{\bold 0})^{-1}$.

We observe that $\cup_{k} \Gamma_{k}$ is locally compact in $B_{1}$, as only a finite number of arcs can have a nonempty intersection with $B_{t}$, for all $0<t<1$. Then, we have the following result.

\begin{lem}\label{lem:pzeroend}
Assume (i)-(iv) and that $\de E\cap \Omega_{\bold 0} \neq\emptyset$. Denote by $p_0$ a point of $\cup_{k} \Gamma_{k}$ minimizing the distance from the origin. Then there exists $k_0$ such that $p_0$ is one of the endpoints of $\Gamma_{k_0}$.
\end{lem}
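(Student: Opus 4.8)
The plan is to locate the free boundary through its point $p_0$ nearest the origin and to show that the Cheeger set $E$ must completely fill the disk $B_{|p_0|}$, which forces $p_0$ to sit at an endpoint of an arc. First I would check that $p_0$ is well defined. By Lemma~\ref{lem:pallaunmezzo} we have $B_{1/2}\subset E$, so no point of $\bigcup_k\Gamma_k$ can lie in the open disk $B_{1/2}$ and every $p\in\bigcup_k\Gamma_k$ satisfies $|p|\ge 1/2$; since $\bigcup_k\Gamma_k$ is nonempty and locally compact in $B_1$ (only finitely many arcs meet $B_t$ for each $t<1$), the distance from the origin attains its minimum at some $p_0$ with $1/2\le|p_0|<1$. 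I would also record that the radius $r=h(\Omega_{\bold 0})^{-1}$ of the arcs satisfies $r<1/2$: indeed $h(\Omega_{\bold 0})\ge 2$ by \eqref{stimasuhperdeltascelto}, and the inequality is strict because $B_1$ is the only set realizing the isoperimetric ratio $2$ while $|B_1\setminus\Omega_{\bold 0}|>0$, so $h(\Omega_{\bold 0})>2$. This is exactly the hypothesis $r<1/2$ required by Lemma~\ref{lem:2.1}.

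The key step is to prove that $B_{|p_0|}\cap\Omega_{\bold 0}\subset E$. By the choice of $p_0$ no point of $\bigcup_k\Gamma_k$ lies in the open disk $B_{|p_0|}$, and since $\partial^*E\cap\Omega_{\bold 0}\subset\bigcup_k\Gamma_k$, the reduced boundary of $E$ does not meet $B_{|p_0|}\cap\Omega_{\bold 0}$; hence $D\chi_E$ vanishes on this open set. Because $B_{|p_0|}\cap\Omega_{\bold 0}$ is connected (a disk from which the pairwise disjoint, tiny closed disks $\overline{B_{\bj}}$ have been removed), the function $\chi_E$ is constant there, and being equal to $1$ on $B_{1/2}\subset E$ we conclude $B_{|p_0|}\cap\Omega_{\bold 0}\subset E$. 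In particular, since $p_0\in\Omega_{\bold 0}$ is a free boundary point, the points immediately on the origin side of $p_0$ belong to $E$, so $E$ occupies the origin side of the arc $\Gamma_{k_0}$ containing $p_0$. I expect this connectedness argument — ensuring that the removed holes neither disconnect $B_{|p_0|}\cap\Omega_{\bold 0}$ nor intrude into a neighbourhood of $p_0$ on the inner radial side — to be the main technical obstacle, and it is where properties (i)--(iv) (which keep the $B_{\bj}$ disjoint, of size $r_{\bj}\le 2^{-18}\e_{\bj}^{3}$, and confined to $\{|x|>3/4\}$) enter.

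Finally I would conclude by a curvature comparison. By Proposition~\ref{prop:proprietaCS} the arc $\Gamma_{k_0}$ is a circular arc of radius $r$ whose centre of curvature lies on the side of $E$; since $E$ lies on the origin side, that centre is on the origin side as well, so $\Gamma_{k_0}$ bulges away from the origin and the region $R$ enclosed by the two segments joining the origin to the endpoints $a,b$ of $\Gamma_{k_0}$ and by $\Gamma_{k_0}$ itself is convex. As $\Gamma_{k_0}\subset\{1/2\le|x|\le 1\}$ and $r<1/2$, Lemma~\ref{lem:2.1} applies and gives $|p|\ge\min\{|a|,|b|\}$ for every $p\in\Gamma_{k_0}$; moreover the distance to the origin is strictly monotone along such a convex arc as one moves from its farthest point towards either endpoint, so the minimum over $\Gamma_{k_0}$ is attained only at $a$ or $b$. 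Since $a,b\in\bigcup_k\Gamma_k$ and $p_0$ minimises the distance over the whole union, $p_0$ must coincide with one of these endpoints, which proves the claim with $k_0$ as above.
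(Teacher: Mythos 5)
Your proposal is correct in its core idea and takes a genuinely different route from the paper's. The paper argues by contradiction starting only from $B_{1/2}\subset E$ (Lemma~\ref{lem:pallaunmezzo}): if $p_0$ were an interior point of $\Gamma_{k_0}$, Lemma~\ref{lem:2.1} (used in \emph{contrapositive}) shows the region enclosed by the arc and the two radial segments is not convex, so $E$ lies locally on the far side of the arc at $p_0$; the radial segment from $p_0$ to the origin must then cross $\de E$ at a first point $q_0$ with $|q_0|<|p_0|$, and the bulk of the paper's proof is a topological argument excluding that $q_0$ lies on a hole boundary $\de B_{\bj}$ (the short arc of $\de B_{\bj}$ cut by the segment must carry an endpoint of some $\Gamma_k$, again contradicting minimality). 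Your step 2 replaces all of this: since no reduced boundary of $E$ meets the open set $B_{|p_0|}\cap\Omega_{\mathbf 0}$, and that set is connected (note $|p_0|<1$, so the compact set $\overline{B_{|p_0|}}$ meets only finitely many of the pairwise disjoint closed holes), the BV function $\chi_E$ is a.e.\ constant there, hence equal to $1$ because $B_{1/2}\subset E$. The containment $B_{|p_0|}\cap\Omega_{\mathbf 0}\subset E$ is stronger than anything the paper establishes at this stage, and it absorbs the paper's entire case analysis on $q_0$; the price is exactly the connectedness claim, which you rightly single out and which does hold under (i)--(iv). Your observation that $h(\Omega_{\mathbf 0})>2$ strictly (so $r<1/2$) is also a point the paper glosses over.

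However, your final step as written has a gap: from ``the centre of curvature lies on the origin side'' you assert that the region $R$ bounded by the two radial segments and $\Gamma_{k_0}$ is convex, and then apply Lemma~\ref{lem:2.1} \emph{forward}. Convexity of $R$ is a hypothesis of that lemma, not a consequence of the position of the centre: nothing you have established controls the angles of $R$ at the endpoints $a,b$, nor even that the two segments and the arc bound a region in the configuration of Figure~\ref{fig:lemma21}. (This is presumably why the paper only ever uses Lemma~\ref{lem:2.1} in contrapositive form.) Fortunately the gap is local and you do not need Lemma~\ref{lem:2.1} at all: if $p_0$ were an interior point of $\Gamma_{k_0}$, minimality of $|\cdot|$ along the arc forces the tangent at $p_0$ to be orthogonal to the radius, so the centre is $p_0\pm r\,p_0/|p_0|$. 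The origin-side choice makes $p_0$ the farthest point of its circle from the origin, so the arc dips strictly inside $B_{|p_0|}$ near $p_0$, contradicting minimality of $p_0$; the far-side choice contradicts your step 2, since $E$ lies on the centre (convex) side of its free boundary arcs, whereas step 2 puts points of $E$ on the origin side arbitrarily close to $p_0$. Either way $p_0$ must be an endpoint, which completes your argument.
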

\begin{proof}
Since $\cup_{k} \Gamma_{k}\cap B_{1}$ is nonempty and locally compact in $B_{1}$, there exists $k_{0}\in \N$ such that $p_{0}\in \Gamma_{k_{0}}$. Assume now by contradiction that $p_{0}$ is not one of the endpoints $a_{0},b_{0}$ of $\Gamma_{k_{0}}$, then owing to Lemma \ref{lem:pallaunmezzo}, $B_{1/2}\subset E$. Thus by Lemma \ref{lem:2.1}, the region enclosed by $\Gamma_{k_{0}}$ and the segments connecting $a_{0}$ and $b_{0}$ to the origin cannot be convex. Therefore, since $B_{1/2} \subset E$, the segment $\sigma_{0}$ connecting $p_{0}$ to the origin must intersect the boundary of $E$ at some first point $q_{0}$ strictly closer than $p_{0}$ to the origin. Indeed, the Cheeger set locally lies on the convex side of $\Gamma_{k_{0}}$ near $p_{0}$. To conclude we need to exclude the possibility that $q_{0}\in \partial \Omega_{\bold 0}\setminus \partial B_{1}$, which means that $q_{0}\in \partial B_\bj$ for some $\bj$. Let now consider the shortest of the two closed arcs of $\partial B_\bj$ cut by $\sigma_{0}$ (note that the arc could degenerate to a single point), and call it $\gamma$. Notice that all the points of $\gamma$ have a distance from the origin which is strictly less than $|p_{0}|$. Then $\gamma$ must contain at least an endpoint of some $\Gamma_{k}$, otherwise there would exist an open neighbourhood $U$ of $\gamma$ such that $U\cap \partial E\cap \Omega_{\bold 0} = \emptyset$, but this cannot hold as $U$ must contain points of $E$ (this comes from the fact that $q_{0}\in \gamma$) as well as points of $\Omega_{\bold 0}\setminus E$ (this is a consequence of the fact that the connected component of $\sigma_{0}\cap \Omega_{\bold 0}$ having an endpoint on $\partial B_\bj$, and being the closest to $p_{0}$, is made of points of $\Omega_{\bold 0}\setminus E$). Therefore $q_{0}\in \Omega_{\bold 0}$, hence $q_{0}\in \Gamma_{k}$ for some $k$, which contradicts the minimality of $p_{0}$. This concludes the proof. 
\end{proof}

\begin{lem}\label{lem:lemma25}
Assume (i)-(iv) and let $p_{0}$ be as in Lemma \ref{lem:pzeroend}. Then letting $\alpha$ be the angle spanned by the half-tangent to $\Gamma_{k_0}$ in $p_0$ and the segment connecting $p_0$ to the origin, one has
\begin{equation}\label{alfaottuso}
\alpha > \frac{\pi}{2} + \frac{d_{0}}{2}\,,
\end{equation}
where $d_{0} = \dist(p_{0},\partial B_{1})$.
\end{lem}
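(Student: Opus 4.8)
The plan is to upgrade the obvious first-order bound $\alpha\ge\pi/2$, coming from the minimality of $p_0$, to the strict estimate \eqref{alfaottuso}; the extra term $d_0/2$ will be produced by the tangency of $\Gamma_{k_0}$ to the boundary of a hole, together with the smallness assumptions (i)--(iv).

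First I would localise the geometry at $p_0$. By Lemma \ref{lem:pzeroend} the point $p_0$ is an endpoint of $\Gamma_{k_0}$, hence it lies on $\de\Omega_{\bold 0}=\de B_1\cup\bigcup_\bj\de B_\bj$. The alternative $p_0\in\de B_1$ is ruled out: there $\Gamma_{k_0}$ would meet $\de B_1$ tangentially by Proposition \ref{prop:proprietaCS}(iii), and since $r=h(\Omega_{\bold 0})^{-1}\le 1/2$ by \eqref{stimasuhperdeltascelto} while the centre of curvature $c_0$ of $\Gamma_{k_0}$ lies on the side of $E\supset B_{1/2}$ (Lemma \ref{lem:pallaunmezzo}), the arc would bend strictly towards the origin, so points of $\Gamma_{k_0}$ near $p_0$ would be closer to $o$ than $p_0$, against the minimality of $p_0$. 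Thus $p_0\in\de B_\bj$ for some $\bj$, the arc $\Gamma_{k_0}$ is tangent to $\de B_\bj$ at $p_0$, and $c_0$ lies on the origin side. I then fix coordinates with $p_0=(\rho_0,0)$, $\rho_0=|p_0|=1-d_0$, and half-tangent $t=(-\cos\alpha,\sin\alpha)$.

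Next I would read off two pieces of information. From minimality, the whole arc lies in $\{|x|\ge\rho_0\}$, and differentiating $s\mapsto|\gamma(s)|$ at $p_0$ gives $p_0\cdot t=-\rho_0\cos\alpha\ge0$, that is $\alpha\ge\pi/2$; equivalently the circle $C$ of radius $r$ carrying $\Gamma_{k_0}$ crosses $\de B_{\rho_0}(o)$ at $p_0$ with angle exactly $\alpha-\pi/2$. From the external tangency to $\de B_\bj$ one computes the exact relation $\rho_\bj^2=\rho_0^2+2\rho_0 r_\bj\sin\alpha+r_\bj^2$, where $\rho_\bj=|x_\bj|=1-\e_\bj$; this, with $\cos\alpha<0$ selected by minimality, expresses $\alpha$ through $d_0,\e_\bj,r_\bj$. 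To obtain the quantitative gain I would then observe that the sub-arc of $C$ lying outside $B_{\rho_0}(o)$ is symmetric about the point of $C$ farthest from $o$ and subtends at $c_0$ an angle which degenerates to $0$ as $\alpha\downarrow\pi/2$; hence if $\alpha$ were too close to $\pi/2$ this outside sub-arc would be too short to contain $\Gamma_{k_0}$. Since by (i)--(iv) (in particular $r_\bj\le2^{-18}\e_\bj^3$) the holes are extremely small and well separated, $\Gamma_{k_0}$ must run a definite length before meeting $\de\Omega_{\bold 0}$ again, and so cannot be squeezed into such a short sub-arc. Combining this lower bound on the span of $\Gamma_{k_0}$ with the tangency relation and $2\le h(\Omega_{\bold 0})<3$ yields $\alpha-\pi/2>d_0/2$.

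I expect the quantitative step to be the main obstacle: bare minimality only gives $\alpha\ge\pi/2$, and one has to feed in both the exact tangency relation at the hole and a lower bound on the angular span of $\Gamma_{k_0}$ (forced by the size and separation of the holes) in order to exclude the nearly tangential configurations and extract the explicit constant $d_0/2$. A secondary point demanding care is the verification that $c_0$ lies on the origin side and that the tangency to $\de B_\bj$ is external, since these fix the sign that selects the obtuse determination of $\alpha$ in the relation above.
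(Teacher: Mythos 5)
Your setup is sound---ruling out $p_0\in\de B_1$, placing $p_0$ on some $\de B_\bj$ with tangency there, and confining $\Gamma_{k_0}$ to $\{|x|\ge|p_0|\}$ are all correct, and the confinement is also the starting point of the paper's proof---but the quantitative step, which you yourself flag as the main obstacle, does not deliver the constant $1/2$, and this is a genuine gap. Your input is a lower bound on the \emph{length} of $\Gamma_{k_0}$ coming from the separation of the holes. Make this precise: the sub-arc of the circle $C$ (center $c_0$, radius $r$) lying outside $B_{\rho_0}(o)$ has half-opening $\theta_*$ at $c_0$ with $\sin\theta_*=\rho_0\sin\gamma/|c_0|$ (law of sines in the triangle $o,p_0,c_0$, where $\gamma=\alpha-\pi/2$), hence length about $2r\rho_0\gamma/(\rho_0-r)$ for small $\gamma$. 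The separation guaranteed by (iii)--(iv) between $B_\bj$ and the rest of $\de\Omega_{\bold 0}$ can be as small as $(\e_\bj-\e_{\bj+1})-r_\bj-r_{\bj+1}\approx\tfrac{7}{10}\e_\bj\approx\tfrac{7}{10}d_0$, and $r=h(\Omega_{\bold 0})^{-1}$ is forced to be close to $1/2$ (since $h\le 2(1+\delta)$ with $\delta<2^{-7}$). So your squeezing argument yields only $\gamma\gtrsim\frac{\rho_0-r}{2r\rho_0}\cdot\tfrac{7}{10}d_0\approx\tfrac{7}{20}d_0$, strictly below the required $d_0/2$. Neither the tangency relation nor $2\le h<3$ repairs this: the length constraint is \emph{linear} in $\gamma$, so no bookkeeping pushes the constant past roughly $1/3$ in the actual worst configuration (second endpoint on the adjacent hole).

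The missing idea is to bound the \emph{radial reach} of $\Gamma_{k_0}$ rather than its length, and this requires the lexicographic structure of the holes, which your sketch never invokes. Since $p_0$ minimizes $|p|$ over $\cup_k\Gamma_k$, the second endpoint $p_1$ cannot lie on a hole $B_{\mathbf{j'}}$ with $\mathbf{j'}\prec\bj$ (by (iii)--(iv) every point of such a hole is strictly closer to the origin than $p_0$), nor on $\de B_\bj$ itself (a circle tangent to $\de B_\bj$ at $p_0$ meets it nowhere else); hence $p_1\in\de B_1$ or $p_1\in\de B_{\mathbf{j'}}$ with $\mathbf{j'}\succ\bj$, and by (iii) any such point is within about $\tfrac{3}{10}\e_\bj+2r_{\mathbf{j'}}<d_0/2$ of $\de B_1$. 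Thus the arc reaches a point $p_*$ with $\dist(p_*,\de B_1)<d_0/2$, which forces $|c_0|\ge|p_*|-r\ge 1-r-d_0/2$. This is exactly the paper's route: feeding that lower bound on $|c_0|$ into the triangle $o,p_0,c_0$ gives $\cos\gamma<1-d_0/4$, whence $\gamma>\sin\gamma>d_0/2$ (in fact $\gamma\gtrsim\sqrt{d_0}$). The reach constraint is \emph{quadratic} in $\gamma$, which is precisely why it clears the threshold $d_0/2$ while the length constraint cannot.
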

\begin{proof}
Let $B_\bj$ be the ball whose boundary contains $p_{0}$. Let $p_{1}$ be the second endpoint of $\Gamma_{k_{0}}$ and denote by $p_{*}$ the point of $\Gamma_{k_{0}}$ minimizing the distance from $\partial B_{1}$. Since $p_{1}\in \partial \Omega_{\bold 0}$, by construction of $\Omega_{\bold 0}$ we infer that either $p_{1}\in \partial B_{1}$, or $p_{1}\in \partial B_{\mathbf{j'}}$ with $\bj \prec \mathbf{j'}$, therefore the distance $d_{*} = \dist(p_{*},\partial B_{1})$ must satisfy $d_{*}< d_{0}/2$. 
%%%\footredGP{in generale, ordine $\e_{\bj} - \e_{\bj +1}$. Ad esempio se $\e_{\bj} = \frac{1}{|\bj| + 10}$ si ha ordine $1/|j|^{2} \sim \e_{\bj}^{2}$ ovvero $d^{*}\le d_{0}-c\e_{\bj}^{2}$}
Indeed this holds true if $\e_{\bj} - 2\e_{\bj+1} \ge r_{\bj}+2r_{\bj+1}$, which follows from conditions (vii) and (viii).
Let $c$ be the center of the arc $\Gamma_{k_{0}}$ and consider the triangle $T$ with vertices $p_{0},c$ and the origin. Notice that $|p_{0}-c| = r < 1/2$ and $|p_{0}| = 1-d_{0}$ while by the triangular inequality applied to the triangle $T^*$ of vertices $p^*, c, o$ we have
\[
|c| \ge |p^{*}| - r = 1 - r - d^{*} \ge 1-r-d_{0}/2\,.
\]
Moreover if we assume that $\alpha < \pi$ (otherwise the estimate would be trivial) then the internal angles of $T$ at $p_{0}$ and at the origin (respectively, $\gamma$ and $\beta$) are smaller than $\pi/2$. Indeed for $\alpha < \pi$ we find that 
\[
\langle p_{0}, \nu_{\bj}(p_{0})\rangle <0\,,
\] 
where $\nu_{\bj}(p_{0})$ denotes the outer normal to $\partial B_{\bj}$ at $p_{0}$, thus $\alpha> \pi/2$.  Then, $\gamma = \alpha - \pi/2 \in [0, \pi/2)$. Finally, $|p_{0}|>r$, whence $\beta < \pi/2$ as claimed. Consequently the orthogonal projection $z$ of $c$ onto the line through the opposite side of $T$ must lie between the origin and $p_{0}$, that is, $|p_{0}| = |z|+|p_{0}-z|$. Then we have
\begin{align*}
|c|^{2} - |z|^{2} &= r^{2} - |p_{0}-z|^{2}\,,
\end{align*}
whence by rearranging terms 
\begin{align*}
|c|^{2} -  r^{2}&= |z|^{2} - |p_{0}-z|^{2}\\ 
&= |p_{0}| \cdot (|z|-|p_{0}-z|)\\ 
&= |p_{0}| \cdot (|p_{0}|-2|p_{0}-z|)\\ 
&= (1-d_{0})(1-d_{0}-2|p_{0}-z|)\,.
\end{align*}
On the other hand
\[
|c|^{2}-r^{2} \ge (1-r-d_{0}/2)^{2} -r^{2} = 1+d_{0}^{2}/4 -2r -d_{0} +d_{0}r\,,
\]
thus we find
\[
2|p_{0}-z| \le 1-d_{0} - \frac{1+d_{0}^{2}/4 -(2-d_{0})r -d_{0}}{1-d_{0}}\,.
\]
Consequently we have
%%%\footredGP{In generale si ottiene $\cos \gamma \le 1-C\e_{\bj}^{2}$ con $C$ costante stimabile uniformemente.}
\begin{align*}
\cos \gamma = \frac{|p_{0}-z|}{r} &\le \frac{2r(1-d_{0}) +rd_{0}-d_{0}+3d_{0}^{2}/4}{2r(1-d_{0})}\\ 
&= 1 - \frac{d_{0}(1-r) - 3d_{0}^{2}/4}{2r(1-d_{0})} < 1 - d_{0}/4\,,
\end{align*}
where the last inequality follows as soon as $d_{0}<1/3$. Being $d_0 \le \e_{\mathbf{1}} + r_{\mathbf{1}}$, this condition is met thanks to (ii) and (iii). Then, we have
\[
\sin^{2}\gamma = 1 - \cos^{2}\gamma > 1 - (1-d_{0}/4)^{2} = d_{0}/2 - d_{0}^{2}/4 >  d^2_{0}/4
\]
and thus we conclude that
\[
\gamma > \sin \gamma >  {d_{0}}/2\,.
\]
Since $\alpha = \pi/2 + \gamma$, we get \eqref{alfaottuso}.
\end{proof}

\begin{lem}\label{lem:2.6}
Assume (i)-(iv) and let $p_{0}$, $\Gamma_{k_0}$, $d_{0}$ and $\alpha$ be as in Lemma \ref{lem:lemma25}. Let $p \in \Gamma_{k_0}$ be a point such that $0<|p_0-p| < d_0/12$. Then, denoting by $\eta$ the angle in $p_0$ spanned by the half-tangent to $\Gamma_{k_0}$ at $p_0$ and the segment from $p_0$ to $p$, one has
\begin{equation}\label{boundoneta}
\xi := \alpha - \eta > \frac{\pi}{2} + \frac{d_{0}}{4}\,.
\end{equation}
\end{lem}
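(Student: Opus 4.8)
The plan is to reduce \eqref{boundoneta} to the single estimate $\eta < d_0/4$, after which the conclusion is immediate. Indeed, once $\eta < d_0/4$ is known, the bound \eqref{alfaottuso} from Lemma~\ref{lem:lemma25} gives
\[
\xi = \alpha - \eta > \frac{\pi}{2} + \frac{d_0}{2} - \eta > \frac{\pi}{2} + \frac{d_0}{2} - \frac{d_0}{4} = \frac{\pi}{2} + \frac{d_0}{4}\,,
\]
which is exactly \eqref{boundoneta}. So the entire task is to control the angle $\eta$ spanned at $p_0$ by the half-tangent and the chord $p_0 p$.

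The key geometric input I would use is the tangent--chord (inscribed angle) relation on the circle of radius $r = h(\Omega_{\bold 0})^{-1}$ carrying the arc $\Gamma_{k_0}$: since $\eta$ is the angle between the half-tangent at $p_0$ and the chord $p_0 p$, it equals half the central angle subtended by the sub-arc from $p_0$ to $p$, whence $\sin \eta = |p_0 - p|/(2r)$. I then feed in the two quantitative bounds. The hypothesis gives $|p_0 - p| < d_0/12$, while \eqref{stimasuhperdeltascelto} yields $h(\Omega_{\bold 0}) < 3$, hence $r > 1/3$ and $2r > 2/3$. Combining,
\[
\sin \eta = \frac{|p_0 - p|}{2r} < \frac{d_0/12}{2/3} = \frac{d_0}{8}\,.
\]
To pass from $\sin\eta$ to $\eta$, note that $\Gamma_{k_0}$ subtends a central angle at most $\pi$ (arcs of a planar Cheeger set have length at most $\pi r$ by Proposition~\ref{prop:proprietaCS}(ii)), so the sub-arc from $p_0$ to $p$ gives $\eta \in [0,\pi/2]$, where Jordan's inequality $\sin\eta \ge \frac{2}{\pi}\eta$ applies in the form $\eta \le \frac{\pi}{2}\sin\eta$. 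Therefore
\[
\eta \le \frac{\pi}{2}\sin\eta < \frac{\pi}{2}\cdot\frac{d_0}{8} = \frac{\pi\,d_0}{16} < \frac{d_0}{4}\,,
\]
the last step holding because $\pi/16 < 1/4$. This establishes $\eta < d_0/4$ and closes the argument.

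The only delicate point — and the reason the hypothesis carries precisely the factor $1/12$ — is the numerical bookkeeping: the chain $|p_0-p| < d_0/12 \Rightarrow \sin\eta < d_0/8 \Rightarrow \eta < \pi d_0/16 < d_0/4$ has almost no slack, so one must keep the constants sharp (in particular using $r > 1/3$ from \eqref{stimasuhperdeltascelto} rather than a cruder lower bound, and the optimal constant $2/\pi$ in Jordan's inequality). I expect this to be the main obstacle, whereas the geometric setup itself — the tangent--chord identity and the fact that $\eta$ stays in the range where Jordan's inequality is valid — is routine and parallels the convexity considerations already used in the proof of Lemma~\ref{lem:pzeroend}.
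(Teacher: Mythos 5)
Your proof is correct and follows essentially the same route as the paper's: both reduce the claim to the bound $\eta < d_0/4$ via the tangent--chord relation $\sin\eta = |p-p_0|/(2r)$ (which the paper derives explicitly through a pair of similar triangles) together with $r>1/3$ and then conclude via \eqref{alfaottuso}. The only cosmetic difference is that you pass from $\sin\eta$ to $\eta$ with Jordan's inequality $\eta \le \tfrac{\pi}{2}\sin\eta$, while the paper uses the cruder $\eta \le 2\sin\eta$, which already suffices.
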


\begin{proof}
Let $c$ be the center of the disk sweeping $\Gamma_{k_0}$ and let $h$ be the projection of $p$ onto the half-tangent to $\Gamma_{k_0}$ at $p_0$. Since $\xi = \alpha -\eta$, by Lemma \ref{lem:lemma25}, it is enough to provide an upper bound for $\eta$.

To this aim we consider the triangles $T$ of vertices $p_0, p_h$ and $h$ and $S$ of vertices $p_0, c$ and $m$, where $m$ is the midpoint of the segment $p-p_0$, as in Figure \ref{fig:lemma26}. It is easy to see they are similar with angles $\pi/2, \eta,$ and $\pi/2 - \eta$. Therefore we have the proportionality relation
\begin{equation}
\frac{|p-h|}{|p-p_0|}= \frac{|p-p_{0}|}{2r}\,, \nonumber
\end{equation}
whence by recalling that $0<\eta<\pi/2$ and that $r> 1/3$ by \eqref{stimePerEmm-1} and the condition on $\delta$ one obtains
\begin{equation}\label{stimePera2}
\frac{\eta}2 \le \sin(\eta) = \frac{|p-h|}{|p-p_0|} = \frac{|p-p_0|}{2r} < \frac{d_0}{24r} < \frac{d_0}{8}\,.
\end{equation}
This upper bound on $\eta$ combined with \eqref{alfaottuso} yields the claim.
\end{proof}

\begin{figure}[t]
\centering
\begin{tikzpicture}[line cap=round,line join=round,>=triangle 45,x=1.0cm,y=1.0cm]
\clip(-0.5,-3.2) rectangle (5.5,6.);
%\fill[color=uuuuuu,fill=uuuuuu,fill opacity=0.1] (0.,0.) -- (1.3972901743470358,-1.147504908503453) -- (4.964846377279386,3.1966257281919384) -- cycle;
%\fill[color=uuuuuu,fill=uuuuuu,fill opacity=0.1] (0.,0.) -- (1.8587452345795503,-2.886914305544091) -- (2.7873215898217665,-2.2890486633478204) -- cycle;
%\draw [shift={(0.,0.)},color=qqwuqq,fill=qqwuqq,fill opacity=0.1] (0,0) -- (-57.224463476866475:0.5050250027931988) arc (-57.224463476866475:-39.394069771231734:0.5050250027931988) -- cycle;
\draw [shift={(0.,0.)},color=wwqqzz,fill=qqzzqq,fill opacity=0.1] (0,0) -- (-39.394069771231734:0.5050250027931988) arc (-39.394069771231734:90.:0.5050250027931988) -- cycle;
\draw (0.,5.623009713745612)-- (0.,0.);
\draw [shift={(4.964846377279386,3.1966257281919384)}] plot[domain=3.7136336800261276:4.334516827282664,variable=\t]({1.*5.904922996629413*cos(\t r)+0.*5.904922996629413*sin(\t r)},{0.*5.904922996629413*cos(\t r)+1.*5.904922996629413*sin(\t r)});
\draw [dash pattern=on 2pt off 2pt] (4.964846377279386,3.1966257281919384)-- (0.,0.);
\draw [dash pattern=on 2pt off 2pt] (2.7873215898217665,-2.2890486633478204)-- (4.964846377279386,3.1966257281919384);
\draw (0.,0.)-- (2.7873215898217665,-2.2890486633478204);
\draw (0.,0.)-- (1.8587452345795503,-2.886914305544091);
\draw (2.7873215898217665,-2.2890486633478204)-- (1.8587452345795503,-2.886914305544091);
\draw [dash pattern=on 2pt off 2pt] (4.964846377279386,3.1966257281919384)-- (1.3972901743470358,-1.147504908503453);
\begin{scriptsize}
\draw (1.8,-2.1) node {$\Gamma$};
%\draw (1.3564174152410364,-1.5820136594373433) node[anchor=north west] {$T$};
%\draw (1.440588249039903,0.6400963528527268) node[anchor=north west] {$S$};
\draw[color=ttqqqq] (0.11068907501781239,5.799768464723231) node {$o$};
\draw[color=uuuuuu] (-0.18390884327822032,-0.08377281781752333) node {$p_0$};
\draw[color=uuuuuu] (5.076768269150935,3.3756484513158815) node {$c$};
\draw[color=uuuuuu] (3,-2.3) node {$p$};
\draw[color=uuuuuu] (1.4,-2.8) node {$h$};
\draw[color=uuuuuu] (1.7,-1.2) node {$m$};
\draw[color=qqwuqq] (4.2,2.5) node {$\eta$};
\draw[color=wwqqzz] (0.2,0.1) node {$\xi$};
\end{scriptsize}
\end{tikzpicture}
\caption{The configuration of Lemma \ref{lem:2.6}.}
\label{fig:lemma26}
\end{figure}
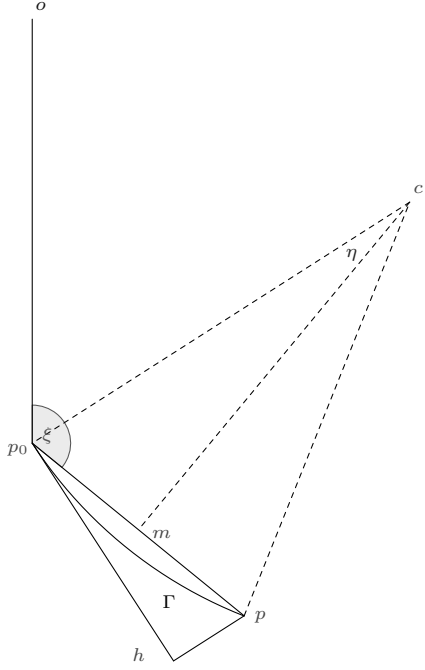

\begin{rem}
Note that Lemmas \ref{lem:lemma25} and \ref{lem:2.6} hold whenever $p_0$ is the endpoint of an arc $\Gamma$ such that $p_0$ minimizes $|p|$ among $p\in \Gamma$.
\end{rem}

\subsection{Proof of Theorem \ref{selfCheegernessOmega}}

We remark that it would not be too difficult to apply a compactness argument and show that, for a suitable choice of parameters, the set $\Omega^{\bj}$ defined as
\[
\Omega^{\bj} := B_1 \setminus  \bigcup \limits_{\bi \preceq \bj} \overline{B_\bi}\,,
\]
is a minimal Cheeger set for all $\bj$. Then, by passing to the limit as $\bj \to \infty$ and by exploiting Theorem 2.7 of \cite{LeoPra}, we would infer that $\Omega_{\bold 0}$ is a Cheeger set as well. However, this simple argument tells us nothing about the uniqueness of the Cheeger set of $\Omega_{\bold 0}$. In other words, there seems to be no way of deducing that $\Omega_{\bold 0} = \lim_{\bj}\Omega^{\bj}$ is a minimal Cheeger set from the minimality of $\Omega^{\bj}$. This is due to the lack of uniform a-priori estimates in the spirit of the quantitative isoperimetric inequality (see in particular \cite{CicLeo2012,CicLeo2013}). In this specific case, the existence of a modulus of continuity $\phi$ independent of $\bj$, such that 
\[
P(E)/|E| - h(\Omega^{\bj}) \ge \phi(|\Omega^{\bj}\setminus E|)
\]
for all $\bj$ and all measurable $E\subset \Omega^{\bj}$, would be needed. By an application of the selection principle introduced in \cite{CicLeo2012} we could obtain $\phi = \phi_{\bj}$, however it is not clear how to exclude a possible degeneracy of the sequence $\{\phi_{\bj}\}_{\bj}$, as $\bj\to\infty$. 
Therefore we choose to follow what reveals to be a much more involved and technically complex path leading to a direct proof of uniqueness. Indeed, by combining the various, intermediate lemmas proved before we ultimately show that any Cheeger set $E$ of $\Omega_{\bold 0}$ must necessarily satisfy $\de E \cap \Omega_{\bold 0}=\emptyset$. Owing to the connectedness of $\Omega_{\bold 0}$ and the fact that $B_{1/2}\subset E$, this is sufficient to conclude that $E = \Omega_{\bold 0}$. Before delving into the proof, we remark that there are four different kinds of arcs inside $\de E \cap \Omega_{\bold 0}$, depending on where their endpoints lie:
\begin{itemize}
\item[(a)] arcs $\Gamma$ with both endpoints on $\partial B_1$;
\item[(b)] arcs $\Gamma$ with both endpoints on $\partial B_{\bj}$ for some $\bj$;
\item[(c)] arcs $\Gamma$ with an endpoint of $\partial B_1$ and one of $\partial B_{\bj}$ for some $\bj$;
\item[(d)] arcs $\Gamma$ with an endpoint on $\partial B_{\bj}$ and one on $\partial B_{\bi}$ with $\bj \neq \bi$.
\end{itemize}
While cases (a) and (b) can be easily excluded by property (ii) of Proposition \ref{prop:proprietaCS}, cases (c) and (d) are much trickier. For these latter two cases the argument is actually the same: we will build a competitor that has a smaller Cheeger ratio, thus contradicting the minimality of $E$. In order to do so, we will also employ Lemma \ref{lem:2.6}.

\begin{proof}[Proof of Theorem \ref{selfCheegernessOmega}]
Argue by contradiction and suppose $\de E \cap \Omega_{\bold 0} \neq \emptyset$.

\emph{Step 1.} We start by showing that cases (a) and (b) cannot happen. Let $\Gamma$ be the arc with endpoints $p,q \in \de B_{\bj}$. Being these points regular, by Proposition \ref{prop:proprietaCS} (iii) the arc $\Gamma$ must be tangent to $B_{\bj}$ in both points. By Proposition \ref{prop:defdelta} and the choice of $r_\bj$  the curvature of $B_{\bj}$ is strictly greater than the curvature of $\Gamma$. Therefore one necessarily has that points $p$ and $q$ coincide which implies that $\Gamma$ is a full circle which contradicts property (ii) of Proposition \ref{prop:proprietaCS}. An analogue reasoning holds for an arc $\Gamma$ with endpoints $p,q \in \de B_1$.

\emph{Step 2.} We now show that cases (c) and (d) cannot happen. We will exhibit a competitor to $E$ that has a better Cheeger ratio against the minimality of $E$. Pick the point $p_0$ provided by Lemma \ref{lem:pzeroend} and consider the arc $\Gamma_{p_0}$ with endpoint $p_0$. There exists a pair $\bj$ such that $p_0 \in \de B_{\bj}$. Trivially there exists at least another point $q_0$ on the boundary of $B_{\bj}$ from which another arc of $\de E\cap \Omega_{\bold 0}$ departs. Let $z \in \de B_\bj$ be the ``north pole'', i.e. the closest point to the origin. Note that there is only a finite number of arcs of $\de E\cap \Omega_{\bold 0}$ touching $\de B_{\bj}$. Moreover, since $|p_{0}|>r$ we find that $|p_{0}| >|z|$ (otherwise we would have $p_{0}=z$ and this would contradict the fact that $p_{0}$ minimizes the distance of points of $\Gamma_{p_{0}}$ from the origin). This shows that $z$ is contained in a connected component $\psi$ of $\de B_{\bj} \setminus \mathcal E_{\bj}$, where $\mathcal E_{\bj}$ denotes the (finite) set of endpoints of arcs of $\de E\cap \Omega_{\bold 0}$ that lie on $\de B_{\bj}$. One of the endpoints of $\psi$ is, of course, $p_{0}$. Let $q_{0}$ denote the other endpoint belonging to the arc $\Gamma_{q_{0}}$.

From now on we shall assume that $\psi$ is smaller than a half-circle, otherwise the construction of the competitor would be even easier. 

Since $p_0$ minimizes the distance of $\de E \cap \Omega_{\bold 0}$ from the origin we have that
\[
d_{q_0} := \dist (q_0, \de B_1) \leq \dist (p_0, \de B_1) =: d_{p_0}.
\]
We now fix two points $q \in \Gamma_{q_0}$ and $p\in \Gamma_{p_0}$ such that
\begin{equation}\label{sceltapeq}
|p-p_0| = |q-q_0| = \frac{d_{q_0}}{16}.
% > \frac{3}{2}\pi r_{\bj}
\end{equation}
%where the last inequality holds if $\e_\bj > (24\pi +1)r_\bj$ which is met due to condition (viii). 
We can apply Lemma \ref{lem:2.6} to the couples of points $p,p_0$ and $q, q_0$ obtaining the estimate from below of the angles $\xi_q$ and $\xi_p$ (that correspond to $\xi$ in Lemma \ref{lem:2.6}):
\begin{equation}\label{angolixi}
\xi_{q}, \xi_{p} > \frac{\pi}{2} +  \frac{d_{q_0}}{4}.
\end{equation}
We now modify the Cheeger set $E$ into $\widetilde E$ by adding the region delimited by $\de B_{\bj}, \Gamma_{q_0}, \Gamma_{p_0}$ and the segment $p-q$. To contradict the minimality of $E$ it is enough to show that $\delta P = P(\widetilde E) - P(E) < 0$ for $\epsilon$ small enough. It is straightforward that
\begin{equation}\label{stimadP}
\delta P \leq 2\pi r_{\bj} - |p-p_0|-|q-q_0| + |p-q| = 2\pi r_{\bj} - 2|p-p_0| + |p-q|.
\end{equation}

\begin{figure}[t]
\centering
\begin{tikzpicture}[line cap=round,line join=round,>=triangle 45,x=1.0cm,y=1.0cm]
\clip(-5.,-4.5) rectangle (0.5,0.5);
\fill[color=wwccff,fill=qqzzqq,fill opacity=0.1] (-3.1683974519907685,-1.5478749262626132) -- (-4.570643537792309,-3.712250913654666) -- (-2.9866367014187247,-3.9505156690964034) -- (-3.0971296935521333,-1.5780502980808755) -- cycle;
\draw(-3.1391518266208713,-1.5780502980808755) circle (0.04202213306873811cm);
\draw (0.,0.)-- (-3.0971296935521333,-1.5780502980808755);
\draw (0.,0.)-- (-3.1683974519907685,-1.5478749262626132);
\draw (-3.0971296935521333,-1.5780502980808755)-- (-3.1683974519907685,-1.5478749262626132);
\draw [shift={(-9.455587640563849,0.9890128948132362)}] plot[domain=4.875754052335625:5.899664434885997,variable=\t]({1.*6.779716829190773*cos(\t r)+0.*6.779716829190773*sin(\t r)},{0.*6.779716829190773*cos(\t r)+1.*6.779716829190773*sin(\t r)});
\draw [shift={(1.7632060515890602,-2.5404952330550867)}] plot[domain=2.9461014151705305:3.7476766116814364,variable=\t]({1.*4.954711273964883*cos(\t r)+0.*4.954711273964883*sin(\t r)},{0.*4.954711273964883*cos(\t r)+1.*4.954711273964883*sin(\t r)});
\draw (-3.1683974519907685,-1.5478749262626132)-- (-4.570643537792309,-3.712250913654666);
\draw (-3.0971296935521333,-1.5780502980808755)-- (-2.9866367014187247,-3.9505156690964034);
\draw (-2.9866367014187247,-3.9505156690964034)-- (-4.570643537792309,-3.712250913654666);
\begin{scriptsize}
\draw[color=black] (0.1,0.1) node {$o$};
\draw[color=ttttqq] (-3.5,-1.8) node {$q_0$};
\draw[color=black] (-2.6,-2) node {$p_0$};
\draw[color=black] (-4.9,-3.7) node {$q$};
\draw[color=black] (-2.6,-3.9) node {$p$};
\end{scriptsize}
\end{tikzpicture}
\caption{The way the competitor is build.}
\label{fig:competitor}
\end{figure}
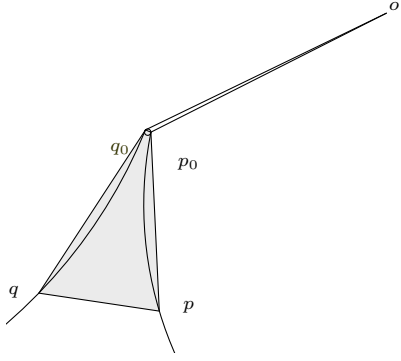

Therefore we need to estimate $|p-q|$ from above. In order to do so, we will employ the angles of the isosceles trapezoid with vertices $p_0, q_0, q, p$ (and, respectively, angles $\gamma_0$ and $\gamma$) and the triangle $T$ of vertices $o, p_0, q_0$ (and, respectively, angles $\sigma, \alpha, \beta$), denoted as in Figure \ref{fig:competitor}. We then have 
 \begin{subequations}
\begin{numcases}{\mbox{}}
  \gamma_0 +\gamma = \pi \label{angoli1}\\
  \alpha +\beta + \xi_{q} +\xi_{p} +2\gamma_0 = 4\pi \label{angoli2}\\
  \alpha+\beta +\sigma = \pi  \label{angoli3}
\end{numcases}
  \end{subequations}
where \eqref{angoli1} denotes the (half of the) sum of interior angles of the trapezoid, \eqref{angoli2} the sum of the angles in $p_0$ and in $q_0$, and \eqref{angoli3} the sum of the interior angles of the triangle $T$.

Subtracting \eqref{angoli3} to \eqref{angoli2}, and combining the resulting equality with \eqref{angolixi}  we find
\[
2\gamma_0 < 2\pi +\sigma - \frac{d_{q_0}}{2}
\]
which coupled with \eqref{angoli1} gives
\[
\gamma > \frac{d_{q_0}}{4} - \frac{\sigma}{2}\,.
\]
We now estimate $\sigma$ from above as follows. First notice that its sine is small
\[
\sin(\sigma) = \frac{|p_0-q_0|}{1-d_{q_0}}\sin(\alpha) \leq 4r_\bj \leq 2^{-4}\e_\bj\,,
\]
where the last inequality is guaranteed by (viii). Thus $\sigma$ itself is small, i.e.
\[
\frac \sigma 2 \le \sigma - \frac{\sigma^3}{6} \le \sin \sigma \le 2^{-4}\e_\bj \le 2^{-3} d_{q_0}\,,
\]
eventually getting the lower bound
\[
\gamma > \frac{d_{q_0}}{8}\,.
\]
Since $|p-q|> |p_0 -q_0|$, the angle $\gamma$ is smaller than $\pi/2$, thus
\begin{equation}\label{eq:thelatter1}
0\le \cos \gamma \le \cos \left( \frac{d_{q_0}}{8} \right) \le 1- \frac{d_{q_0}^2}{2^7} + \frac{d_{q_0}^4}{3 \cdot 2^{15}} \le 1 - \frac{d_{q_0}^2}{2^8}\,.
\end{equation}
From \eqref{sceltapeq}, \eqref{stimadP} and \eqref{eq:thelatter1} it follows that
\begin{align*}
\delta P &\le 2\pi r_{\bj} -2|p-p_{0}| + |p-q| \\
&\le 2\pi r_{\bj} -2|p-p_{0}| + 2|p-p_{0}|\cos \gamma + 2 r_{\bj}\\
&\le 2r_{\bj}(\pi +1) +\frac{d_{q_0}}{2^3} ( \cos (\gamma) - 1) \le 2r_{\bj}(\pi +1) - \frac{d_{q_0}^3}{2^{11}}
\end{align*}
Since by (viii) we have $r_\bj \le 2^{-18} \e_\bj^{3}$ and $d_{q_0} \ge  \e_\bj/2$, we obtain
\[
\frac{d^3_{q_0}}{2^{11}} \ge \frac{\e_\bj^3}{2^{14}} \ge 16r_\bj > 2r_\bj(\pi +1)\,,
\]
thus $\delta P<0$, a contradiction. This concludes the proof of the theorem.
\end{proof}

%%%%%%%%%Come si fa a far comparire il DOI per un articolo al momento solo online?

\bibliographystyle{plain}

\bibliography{LeoSar_ex}

\end{document}